\patchcmd{\section}{\normalfont\scshape\centering}{\normalfont\bfseries}{}{}
\patchcmd{\subsection}{-.5em}{.5em}{}{}
\renewenvironment{proof}{{\noindent\bfseries Proof.}}{}
\newtheorem{theo}{{Theorem}}[section]
\newtheorem{coro}[theo]{{Corollary}}
\newtheorem{lemma}[theo]{{Lemma}}
\newtheorem{prop}[theo]{Proposition}
\theoremstyle{definition}
\newtheorem{remark}[theo]{\textbf{Remark}}
\newtheorem{example}[theo]{Example}
\numberwithin{equation}{section}
\newtheorem{notation}[theo]{Notation}
\newcommand{\ra}{\rightarrow}
\newcommand{\ol}{\overline}
\newcommand{\cE}{\mathcal{E}}
\newcommand{\cG}{\mathcal{G}}
\newcommand{\cH}{\mathcal{H}}
\newcommand{\cK}{\mathcal{K}}
\newcommand{\Br}{\mathrm{Br}}
\newcommand{\img}{\mathrm{Im}}
\newcommand{\Gal}{\mathrm{Gal}}
\newcommand{\inv}{\mathrm{inv}}
\newcommand{\gm}{\mathbb{G}}
\newcommand{\ff}{\mathfrak{f}}
\newcommand{\rat}{\mathbf{Q}}
\newcommand{\ent}{\mathbf{Z}}
\newcommand{\aff}{\mathbb{A}}
  \newcommand{\textcyr}[1]{%
    {\fontencoding{OT2}\fontfamily{wncyr}\fontseries{m}\fontshape{n}%
     \selectfont #1}}
\newcommand{\sha}{{\mbox{\textcyr{Sh}}}}
\newcommand{\ruB}{{\mbox{\textcyr{B}}}}
\begin{document}
\tolerance 400 \pretolerance 200 \selectlanguage{english}

\title{Norm tori of \'etale algebras and unramified Brauer groups}
\author{Eva  Bayer-Fluckiger and Ting-Yu Lee}
\date{\today}
\maketitle

\begin{abstract}
Let $k$ be a field, and let $L$ be an \'etale $k$-algebra of finite rank.
If $a \in k^{\times}$, let $X_a$ be the affine variety defined by
$N_{L/k}(x) = a$. Assuming that $L$ has at least one factor that is a cyclic field extension of $k$, we
give a combinatorial description of the unramified Brauer group of $X_a$.

\medskip


\medskip

\end{abstract}

\small{} \normalsize

\medskip

\selectlanguage{english}
\section{Introduction}

Let $k$ be a field, let $L$ be an \'etale $k$-algebra of finite rank, and let $N_{L/k} : L \to k$ be the norm map.
Let $a \in k^{\times}$, and let $X_a$ be the affine $k$-variety determined by
$${\rm N}_{L/k}(t) = a.$$

\smallskip
\noindent
Let  $X_a^c$ be a smooth compactification of $X_a$. The aim of this paper is to describe the  group
${\rm Br}(X_a^c)/{\rm Im}({\rm Br}(k))$  under the hypothesis that $L$ has {\it at least one cyclic factor}. We first
give a combinatorial description of a group associated to the \'etale algebra $L$ (see \S 3), and then
give an explicit isomorphism between this group and ${\rm Br}(X_a^c)/{\rm Im}({\rm Br}(k))$  (see \S 4, in
particular Theorem \ref{main theo}).

\medskip
Let us illustrate our results by a special case. Let $p$ be a prime number and let $n \geqslant 1$ be
an integer; assume that ${\rm char}(k) \not = p$ and
let $F$ be a Galois extension of $k$ with Galois group $\ent/p^n\ent\times\ent/p^n\ent$. Suppose that $L$ is
a product of $r$ linearly disjoint cyclic subfields of $F$ of degree $p^n$. Then we have (see Theorem
\ref{example of bicyclic}) :

\medskip
\noindent
{\bf Theorem.} $$\Br(X_a^c)/\img(\Br(k))\simeq(\ent/p^{n}\ent)^{r-2}.$$

\medskip We also give explicit generators of this group, as follows. With the above notation, let  $K$ be one of the cyclic subfields
of degree $p^n$ of $F$, and let $\chi$ be an injective morphism from ${\rm Gal}(K/k)$ to $\rat/\ent$. Let us
write $L = K \times K’$, with $K’ = \underset{i \in I} \prod K_i$, where $K_i$ is a cyclic subfield of $F$ of degree
$p^n$ of $F$ for all $i \in I$, and assume that $K$ and the fields $K_i$ are linearly disjoint in $F$. For all $i \in I$,
set $N_i = N_{K_i/k}(y_i)$, considered as elements of $k(X_a)^{\times}$. Assume that the cardinal of $I$ is $r-1$, so
that $L$ is a product of the $r$ linearly independent cyclic subfields $K$ and $K_i$ of $F$ of degree $p^n$.
Let $I'$ be a subset of cardinal $r-2$ of $I$.

\medskip
 Let $(N_i,\chi)$ denote the  class of the cyclic algebra over $k(X_a)$
associated to $\chi$ and the element $N_i \in k(X_a)^{\times}$.

\medskip
\noindent
{\bf Theorem.} The group $\Br(X_a^c)/\img(\Br(k))$ is generated by the elements $(N_i,\chi)$ for
$i \in I'$.

\medskip
This is also proved in Theorem
\ref{example of bicyclic}. Note that the above results are generalizations of \cite{BP}, Theorems 11.1 and 11.2.

\medskip

\medskip

The paper is organized as follows. Throughout the paper, $K$ is a finite cyclic extension of $k$, and $L = K \times K'$,
where $K'$ is an \'etale $k$-algebra of finite rank. Sections \ref {notation} and \ref {norm} are preliminary : in particular,
it is shown in \S \ref {norm} that we may assume $K$ be cyclic of prime power degree. Sections \ref{unramified} and
\ref{generators} contain the description of the unramified Brauer group. When $k$ is a global field, we obtain
additional results concerning the ``locally trivial" Brauer group (cf. \S \ref{global section}). Finally, in \S \ref{Hasse principles}
we apply Theorem \ref{main theo} to give an alternative proof of
\cite{BLP} Theorem 7.1 for
 $k$  a global field with $char(k)\neq p$; we show that the Brauer-Manin map of \cite{BLP} is the Brauer-Manin pairing,
 and hence deduce the Hasse principle from results of \cite{Sansuc} and \cite {DH}.

\section{Definitions and notation}\label{notation}

{\bf Generalities}

\medskip

Let $k$ be a field, let $k_s$ be a separable closure of $k$ and let $\cG_k = {\rm Gal}(k_s/k)$ be the
absolute Galois group of $k$.
 We fix once and for all this
separable closure $k_s$, and all separable extensions of $k$ that will appear in the paper will be contained in $k_s$.
We use standard notation in Galois cohomology; in particular, if $M$ is
a discrete $\cG_k$-module and $i$ is an integer  $\ge 0$, we set $H^i(k,M) = H^i(\cG_k,M)$. A {\it $\cG_k$-lattice} will
be a torsion free  $\bf Z$-module of finite rank on which $\cG_k$ acts continuously. For a $k$-torus $ T$, we denote by $\hat{ T} = {\rm Hom}(T,\gm_m)$ its character group; it is a $\cG_k$-lattice.

\medskip
 Let $G$ be a finite group. A {\it $G$-lattice} is by definition a ${\bf Z}$-torsion free ${\bf Z}[G]$-module of finite rank.
If $g \in G$, we denote by $\langle g \rangle$ the cyclic subgroup of $G$ generated by $g$. Let $M$ be
a $G$-lattice.  Set

$$\sha^2_{\rm cycl}(G,M) = {\rm Ker}[H^2(G,M) \to \prod_{g \in G} H^2(\langle g \rangle,M)].$$

\medskip
We recall a result of Colliot-Th\'el\`ene and Sansuc (cf. \cite{CTS 87}  Prop. 9.5)

\begin{theo}\label{Colliot Br cycl} Let $G$ be a finite group, let $T$ be a $k$-torus, and assume that the character group of $T$ is a $G$-lattice via a surjection $\cG_k \to G$. Let $T^c$ be a smooth compactification of $T$. We have ${\rm Br}(T^c)/{\rm Br}(k) \simeq \sha^2_{\rm cycl}(G,\hat T)$.

\end{theo}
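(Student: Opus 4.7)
My plan is to reduce $\Br(T^c)/\Br(k)$ to a Galois-cohomological invariant of $\hat T$ via the Picard group of the compactification, and then identify this invariant with $\sha^2_{\rm cycl}(G,\hat T)$ by means of a flasque resolution.

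First I would establish $\Br(T^c)/\Br(k)\simeq H^1(G,\mathrm{Pic}(T^c_{k_s}))$. Since $T$ is $k_s$-rational, so is $T^c_{k_s}$, hence $\Br(T^c_{k_s})=0$. Combined with $k_s[T^c]^{\times}=k_s^{\times}$ (which holds because $T^c$ is proper), the Hochschild--Serre spectral sequence for the \'etale sheaf $\gm_m$ on $T^c$ collapses to the asserted isomorphism; the Galois action factors through $G$ because $T^c$ is split by the Galois extension corresponding to $\cG_k\twoheadrightarrow G$.

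The second step is to exploit the boundary divisor $D:=T^c_{k_s}\setminus T_{k_s}$. Its prime components are permuted by $G$, yielding a short exact sequence of $G$-lattices
$$0\to\hat T\to Q\to\mathrm{Pic}(T^c_{k_s})\to 0,$$
where $Q:=\mathrm{Div}_D(T^c_{k_s})\simeq\bigoplus_i\ent[G/H_i]$ is a permutation module (with $H_i$ the stabilizer of a component of $D$), and $\mathrm{Pic}(T^c_{k_s})$ is flasque. Shapiro's lemma gives $H^1(G,Q)\simeq\bigoplus_i\Hom(H_i,\ent)=0$, so the long exact sequence reduces to
$$0\to H^1(G,\mathrm{Pic}(T^c_{k_s}))\to H^2(G,\hat T)\xrightarrow{\partial}H^2(G,Q)\simeq\bigoplus_i H^2(H_i,\ent).$$

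The crux is to match $\ker(\partial)$ with $\sha^2_{\rm cycl}(G,\hat T)$. For the inclusion $\sha^2_{\rm cycl}\subseteq\ker(\partial)$: an element of $H^2(H_i,\ent)\simeq\Hom(H_i,\rat/\ent)$ vanishes iff its restriction to every cyclic subgroup $\langle g\rangle\le H_i$ vanishes, and via Shapiro--Mackey the $\langle g\rangle$-component of $\partial(\alpha)$ factors through $H^2(\langle g\rangle,\hat T)\to H^2(\langle g\rangle,\ent)$, which kills $\alpha$ whenever $\alpha|_{\langle g\rangle}=0$. For the reverse inclusion, restrict the flasque resolution to a cyclic $C=\langle g\rangle\le G$: flasqueness plus cyclicity gives $H^1(C,\mathrm{Pic})=\hat H^{-1}(C,\mathrm{Pic})=0$, hence $H^2(C,\hat T)\hookrightarrow H^2(C,Q)$, and since $\partial(\alpha)=0$ its restriction to $C$ vanishes, forcing $\alpha|_C=0$. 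The main obstacle I anticipate lies here: coherently tracking the Shapiro--Mackey compatibility between $\partial$ and restriction to cyclic subgroups, together with invoking the nontrivial geometric input that $\mathrm{Pic}(T^c_{k_s})$ is flasque for a smooth projective compactification (classical, but resting on a toric resolution argument).
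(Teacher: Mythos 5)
The paper itself does not prove this theorem: it is quoted from \cite{BP}, Theorem 1.3, which in turn is the result of Colliot-Th\'el\`ene--Sansuc (\cite{CTS 87}, Prop.\ 9.5). What you have written is essentially the standard proof of that cited result, and its cohomological core is sound: the sequence $0\to\hat T\to \mathrm{Div}_D(T^c_{k_s})\to \mathrm{Pic}(T^c_{k_s})\to 0$, Shapiro plus $\Hom(H_i,\ent)=0$, the Shapiro--Mackey compatibility showing $\sha^2_{\rm cycl}(G,Q)=0$ for a permutation module $Q$, and flasqueness plus periodicity of Tate cohomology of cyclic groups for the reverse inclusion are all correct. (Two small points you gloss over: the Hochschild--Serre ``collapse'' onto $\Br(T^c)/\Br(k)\simeq H^1(k,\mathrm{Pic}(T^c_{k_s}))$ uses the rational point $1\in T(k)$ to dispose of the map to $H^3(k,\gm_m)$; and when $G$ acts non-faithfully on $\hat T$ one needs the easy inflation remark identifying $H^1(\cG_k,-)$ with $H^1(G,-)$ and $\sha^2_{\rm cycl}$ over $G$ with that over the faithful quotient.)

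The genuine gap is your claim that ``the Galois action factors through $G$ because $T^c$ is split by the Galois extension corresponding to $\cG_k\twoheadrightarrow G$.'' The theorem is stated for an \emph{arbitrary} smooth compactification $T^c$, and for such a $T^c$ the $\cG_k$-action on the boundary components, hence on $\mathrm{Pic}(T^c_{k_s})$, need not factor through $G$ at all: if $\dim T\ge 2$, blow up a closed point of the boundary whose residue field is not contained in the fixed field of $\ker(\cG_k\to G)$; the geometric components of the exceptional divisor are then permuted by elements acting trivially on $\hat T$. So your short exact sequence is not a sequence of $G$-lattices and $H^1(G,\mathrm{Pic}(T^c_{k_s}))$ is not even defined in general. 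The standard repair, which your sketch omits, has two ingredients: (i) run the argument on a smooth projective $T$-equivariant compactification split by the splitting field of $T$ --- its existence over an arbitrary field is exactly the Brylinski--K\"unnemann input of \cite{CTHSk 05}, which is also where flasqueness of $\mathrm{Pic}$ and the vanishing of $\Br(T^c_{k_s})$ (including $p$-torsion in characteristic $p$) are most safely obtained; and (ii) invoke the independence of $\Br(X)/\img(\Br(k))$, equivalently of $H^1(k,\mathrm{Pic}(X_{k_s}))$, of the chosen smooth compactification (the flasque class of $\mathrm{Pic}$ is a well-defined invariant, as in \cite{CTS 77}, \cite{CTS 87}), so that the computation on the equivariant model transfers to the given $T^c$. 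With these two steps added, your argument becomes the proof given in the sources the paper cites.
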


\noindent
{\bf Proof.} See \cite{BP}, Theorem 1.3.

\medskip

{\bf Norm equations}

\medskip
Let $L$ be an \'etale $k$-algebra of finite rank; in other words, a product of a finite number of
separable extensions of $k$. Let $T_{L/{k}}  = R^{(1)}_{L/k}({\bf G}_m)$ be the $k$-torus
defined by $$1 \to T_{L/{k}} \to R_{L/k}({\bf G}_m) {\buildrel {\rm N}_{L/{k}} \over \longrightarrow}  {\bf G}_m \to 1.$$

\medskip Let $a \in k^{\times}$. Let $X_a$ be the affine $k$-variety associated to the {\it norm equation}

$${\rm N}_{L/k}(t) = a.$$

\medskip
The variety $X_a$ is a torsor under $T_{L/k}$;
let  $X_a^c$ be a smooth compactification of $X_a$. We have a natural map ${\rm Br}(k) \to {\rm Br}(X_a^c)$; if $a = 1$
then $X_1 = T_{L/k}$, and the map ${\rm Br}(k) \to {\rm Br}(T_{L/k}^c)$ is injective, and moreover we have an
injection  $${\rm Br}(X^c)/{\rm Im}({\rm Br}(k)) \to {\rm Br}(T^c)/{\rm Br}(k)$$ (see for instance \cite {BP}, \S 5).
Recall a result from \cite {BP}, Theorem 6.1 :

\begin{theo}\label{x = t}
 Assume that $L = K \times K’$, where $K/k$ is a cyclic extension and $K’$ an \'etale $k$-algebra.
Then the map ${\rm Br}(X^c)/{\rm Im}({\rm Br}(k)) \to {\rm Br}(T^c)/{\rm Br}(k)$ is an isomorphism.
\end{theo}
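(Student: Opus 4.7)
The plan is to establish surjectivity of the known injection ${\rm Br}(X^c)/{\rm Im}({\rm Br}(k)) \hookrightarrow {\rm Br}(T^c)/{\rm Br}(k)$, with $T = T_{L/k}$. Since $X_a$ is a torsor under $T$, we have $\overline{X_a^c} \cong \overline{T^c}$ as $k_s$-varieties, and since the action of $T$ on its own Picard group is trivial, ${\rm Pic}(\overline{X_a^c}) \cong {\rm Pic}(\overline{T^c})$ as $\cG_k$-modules. Sansuc's exact sequence then embeds both ${\rm Br}(T^c)/{\rm Br}(k)$ and ${\rm Br}(X_a^c)/{\rm Im}({\rm Br}(k))$ into the same group $H^1(k,{\rm Pic}(\overline{T^c}))$, as the kernels of two ``Sansuc boundary'' maps $\delta_T,\delta_X: H^1(k,{\rm Pic}(\overline{T^c})) \to H^3(k,\overline{k}^{\times})$.

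First I would show that the difference $\delta_X - \delta_T$ is given by cup product with the torsor class $[X_a] \in H^1(k,T_{L/k}) \cong k^{\times}/N_{L/k}(L^{\times})$; this follows from a standard Leray spectral sequence comparison, reflecting the fact that the $E_2$-page is the same for $X_a^c$ and $T^c$ but the descent datum is twisted by $[X_a]$. By Theorem \ref{Colliot Br cycl}, the image of ${\rm Br}(T^c)/{\rm Br}(k)$ in $H^1(k,{\rm Pic}(\overline{T^c}))$ is $\sha^2_{\rm cycl}(G,\hat T_{L/k})$ for $G = \mathrm{Gal}(F/k)$, with $F/k$ a Galois splitting field of $T$. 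Thus surjectivity reduces to showing that the cup product $[X_a] \cup \alpha$ vanishes in $H^3(k,\overline{k}^{\times})$ for every $\alpha \in \sha^2_{\rm cycl}(G,\hat T_{L/k})$.

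Next, I would exploit the cyclic factor via the exact sequence of $k$-tori
$$1 \to T_{K'/k} \to T_{L/k} \to R_{K/k}(\gm_m) \to 1,$$
obtained by projecting $T_{L/k}$ onto its first factor $R_{K/k}(\gm_m)$. Shapiro's lemma together with Hilbert 90 give $H^1(k,R_{K/k}(\gm_m)) = 0$, hence the surjection $H^1(k,T_{K'/k}) \twoheadrightarrow H^1(k,T_{L/k})$. Lifting $[X_a]$ to a class $[Y_b] \in H^1(k,T_{K'/k})$ (with $Y_b$ the $T_{K'/k}$-torsor $N_{K'/k}(v) = b$, equipped with a natural $k$-morphism $Y_b \to X_a$ via $v \mapsto (1,v)$), I would reduce the cup product $[X_a] \cup \alpha$ to a pairing on the subtorus. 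By functoriality, the resulting pairing factors through the restriction of $\alpha$ along the subgroups of $G$ that control the $K$-factor, in particular through cyclic subgroups of $G$.

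Finally, by the very definition of $\sha^2_{\rm cycl}(G,\hat T_{L/k})$, the class $\alpha$ dies on every cyclic subgroup of $G$; combined with the reduction above, this forces $[X_a] \cup \alpha = 0$, hence $\delta_X(\alpha) = \delta_T(\alpha) = 0$ for every $\alpha$ in the image of ${\rm Br}(T^c)/{\rm Br}(k)$, giving surjectivity. The main obstacle will be the cocycle-level bookkeeping in the middle step: making precise the comparison of $\delta_X$ and $\delta_T$ as a cup product with $[X_a]$, and then correctly identifying that the cyclic-subgroup restriction killing $\alpha$ is the same data that annihilates the cup product via the exact sequence of tori above.
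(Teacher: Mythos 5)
The statement is quoted in the paper from \cite{BP}, Theorem 6.1, without reproof, so I am comparing your sketch with the mathematics behind that reference. Your overall architecture is the right one and is essentially how such results are proved: identify $\Br(T^c)/\Br(k)$ with $H^1(k,\mathrm{Pic}(\overline{T^c}))\simeq\sha^2_{\rm cycl}(G,\hat T_{L/k})$, observe that $\Br(X_a^c)/\mathrm{Im}(\Br(k))$ sits inside the same group as the kernel of an obstruction map to $H^3(k,k_s^{\times})$, and identify that obstruction with the cup product of (the image in $H^2(k,\hat T_{L/k})$ of) $\alpha$ with the torsor class $[X_a]\in H^1(k,T_{L/k})$. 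Two points there are stated loosely but are standard and fixable: $\mathrm{Pic}(\overline{X_a^c})$ and $\mathrm{Pic}(\overline{T^c})$ are in general only stably isomorphic unless one takes a smooth equivariant compactification and the contracted product, and the cup product must be taken after the boundary map $H^1(k,\mathrm{Pic}(\overline{T^c}))\to H^2(k,\hat T_{L/k})$ attached to $0\to\hat T_{L/k}\to\mathrm{Div}_\infty\to\mathrm{Pic}(\overline{T^c})\to 0$.

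The genuine gap is the last step, and it is not bookkeeping. You lift $[X_a]$ through the kernel $T_{K'/k}$ of the projection onto the cyclic factor; but the surjectivity $H^1(k,T_{K'/k})\twoheadrightarrow H^1(k,T_{L/k})$ only uses $H^1(k,R_{K/k}(\mathbb{G}_m))=0$, which holds for \emph{any} factor $K$, so at the point where you claim to conclude, cyclicity has not been used at all; if the argument were valid it would prove the statement for an arbitrary factorization over an arbitrary field, which is not expected (the obstruction lands in $H^3(k,k_s^{\times})$, which has no reason to vanish in general). Concretely, after the projection formula you face $\hat\imath(\alpha)\cup[Y_a]$ with $\hat\imath(\alpha)\in H^2(k,\hat T_{K'/k})$, and nothing forces this to vanish: $\hat\imath(\alpha)$ lies in $\sha^2_{\rm cycl}(G,\hat T_{K'/k})$, which is typically nonzero, and the assertion that ``the pairing factors through the restriction of $\alpha$ along cyclic subgroups of $G$'' does not follow from functoriality --- membership in $\sha^2_{\rm cycl}$ concerns restrictions of $\alpha$ to (pro)cyclic subgroups with \emph{unchanged} coefficients, and has no interaction with the coefficient map $\hat T_{L/k}\to\hat T_{K'/k}$ or with cup products against a class defined over $k$. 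The repair is to use the complementary sequence $1\to T_{K/k}\to T_{L/k}\to R_{K'/k}(\mathbb{G}_m)\to 1$: since $H^1(k,R_{K'/k}(\mathbb{G}_m))=0$, the class $[X_a]$ is $\jmath_*\delta_K(a)$ with $\delta_K(a)\in H^1(k,T_{K/k})=k^{\times}/N_{K/k}(K^{\times})$ (the map $x\mapsto(x,1)$ realizes this), so the projection formula gives $\alpha\cup[X_a]=\hat\jmath(\alpha)\cup\delta_K(a)$ with $\hat\jmath(\alpha)\in\sha^2_{\rm cycl}(G,\hat T_{K/k})$. The place where the cyclic hypothesis genuinely enters is the vanishing $\sha^2_{\rm cycl}(G,\hat T_{K/k})=0$: for $\Gamma=\mathrm{Gal}(K/k)$ cyclic one has $\hat T_{K/k}=\mathbf{Z}[\Gamma]/(N_\Gamma)\simeq I_\Gamma$, so the augmentation sequence $0\to\hat T_{K/k}\to\mathbf{Z}[G/G_K]\to\mathbf{Z}\to 0$ is a flasque resolution with flasque part $\mathbf{Z}$ (equivalently, $T_{K/k}\simeq R_{K/k}(\mathbb{G}_m)/\mathbb{G}_m$ is $k$-rational), whence $\Br(T_{K/k}^c)/\Br(k)=H^1(G,\mathbf{Z})=0$. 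With that vanishing, $\alpha\cup[X_a]=0$ for every $\alpha$ and surjectivity follows; without it, your sketch does not close.
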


\medskip

{\bf Global fields}

\medskip
\medskip
If $k$ is a global field, we denote by
$V_k$ be the set of all places of $k$; if $v \in V_k$, we denote by $k_v$ the completion of $k$ at $v$.

\medskip
For any $k$-torus $T$, set $$\sha ^i(k,T) = {\rm Ker} (H^i(k,T) \ra \underset {v \in V_k} \prod H^i(k_v,T)).$$
If $M$ is a $\cG_k$-module,
set
$$\sha^i (k,M) = {\rm Ker} (H^i(k,M) \ra \underset {v \in V_k} \prod H^i(k_v,M)),$$and let $\sha_{\omega}^i(k,M)$ be the set of
$x \in H^i(k,M)$ that map to 0 in $H^i(k_v,M)$ for almost all $v \in V_k$.

\section {Norm equations and \'etale algebras}\label{norm}

In the sequel, we consider norm equations of \'etale algebras having at least one cyclic factor. The aim of
this section is to introduce some notation and prove some results that will be used throughout the paper.

\medskip
Let $K$ be a cyclic extension of $k$, and let $K’$ be an \'etale $k$-algebra of finite rank; set $L = K \times K’$. We first
show that it is enough to consider the case when $K/k$ is cyclic of prime power degree.

\medskip
{\bf Reduction to the prime power degree case}

\medskip
Let $\mathcal P$ be the set of prime numbers dividing $[K:k]$. For all $p \in \mathcal P$, let $K[p]$ be the
largest subfield of $K$ such that $[K[p]:k]$ is a power of $p$, and set $L[p] = K[p] \times K’$. Recall from
\cite {BLP} the following result

 \begin{prop}\label{reduction to prime power degree - sha} Assume that $k$ is a global
 field.  We have $$\sha^2(k,\hat T_{L/k}) \simeq \underset
{p \in \mathcal P} \oplus \sha^2(k, \hat T_{L[p]/k}).$$

\end{prop}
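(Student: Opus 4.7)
\textbf{(Plan.)} The plan is to compute $\sha^2(k, \hat T_{L/k})$ one prime at a time, using the fact that the cyclic Galois group $\Gal(K/k)$ is canonically the direct product of its Sylow subgroups. Since $\sha^2$ for a torus over a global field is a finite abelian group, it suffices to identify its $\ell$-primary component for each prime $\ell$; this is done by $\ell$-localizing $\hat T_{L/k}$ and computing with $\ent_{(\ell)}$-coefficients, which commutes with $\sha^2$ by flatness.

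First, I would start from the defining exact sequence
$$0 \to \ent \to \hat R_{L/k}({\bf G}_m) \to \hat T_{L/k} \to 0$$
of $\cG_k$-lattices, with $\hat R_{L/k}({\bf G}_m) = \ent[\Gal(K/k)] \oplus \hat R_{K'/k}({\bf G}_m)$ and $1 \mapsto (N_K, N_{K'})$. Fix a prime $\ell \in \mathcal P$ and $\ell$-localize. Writing $\Gal(K/k) \cong \Gal(K[\ell]/k) \times G'$ with $G' := \prod_{q \in \mathcal P,\, q \neq \ell} \Gal(K[q]/k)$ of order $m_\ell$ coprime to $\ell$, the central idempotent $e_\ell := m_\ell^{-1} N_{G'} \in \ent_{(\ell)}[G']$ splits $\ent_{(\ell)}[\Gal(K/k)]$ as $e_\ell \ent_{(\ell)}[\Gal(K/k)] \oplus P_\ell$, where $P_\ell := (1 - e_\ell)\ent_{(\ell)}[\Gal(K/k)]$ is stably permutation (since $P_\ell \oplus \ent_{(\ell)}[\Gal(K[\ell]/k)] \cong \ent_{(\ell)}[\Gal(K/k)]$, a permutation $\cG_k$-module). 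A direct computation using $N_K = m_\ell \cdot e_\ell \cdot N_{K[\ell]}$, combined with a $\cG_k$-equivariant rescaling by the unit $m_\ell \in \ent_{(\ell)}^{\times}$, then yields the $\cG_k$-equivariant decomposition
$$\hat T_{L/k} \otimes \ent_{(\ell)} \ \simeq\ P_\ell\ \oplus\ \hat T_{L[\ell]/k} \otimes \ent_{(\ell)}.$$

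Applying Shapiro's lemma, together with the vanishing $\sha^2(k', \ent) = 0$ for any global field $k'$ (Chebotarev applied to $H^1(k', \rat/\ent)$), gives $\sha^2(k, P_\ell) = 0$, whence $\sha^2(k, \hat T_{L/k})_{(\ell)} \simeq \sha^2(k, \hat T_{L[\ell]/k})_{(\ell)}$ for $\ell \in \mathcal P$. Running the same argument on $L[\ell]$ in place of $L$ (whose associated prime set is $\{\ell\}$) shows that $\sha^2(k, \hat T_{L[\ell]/k})$ is already $\ell$-primary. For $\ell \notin \mathcal P$, the analogous construction with $K[\ell]$ formally taken to be $k$ realizes $\hat T_{L/k} \otimes \ent_{(\ell)}$ itself as a stably permutation lattice, forcing $\sha^2(k, \hat T_{L/k})_{(\ell)} = 0$. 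Summing over $\ell \in \mathcal P$ produces the claimed isomorphism. The main delicate point will be the $\cG_k$-equivariance of the idempotent splitting, which holds because $e_\ell$ is central in $\ent_{(\ell)}[\Gal(K/k)]$ and $\cG_k$ acts on the latter through $\Gal(K/k)$.
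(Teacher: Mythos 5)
Your argument is correct in substance, but it is a genuinely different route from the paper's. The paper disposes of this proposition in one line by citing \cite{BLP} (Lemma 3.1 and Proposition 5.16), i.e.\ it rests on the explicit arithmetic description of $\sha^2(k,\hat T_{L/k})$ for multinorm tori developed there. You instead argue directly on the character lattice: starting from $0 \to \ent \to \ent[\Gal(K/k)] \oplus \bigoplus_i \ent[G/G_{K_i}] \to \hat T_{L/k} \to 0$, you localize at a prime $\ell$, use the central idempotent $e_\ell = m_\ell^{-1}N_{G'}$ attached to the prime-to-$\ell$ part $G'$ of the cyclic group $\Gal(K/k)$ to split off $P_\ell=(1-e_\ell)\ent_{(\ell)}[\Gal(K/k)]$, and after the unit rescaling by $m_\ell$ obtain $\hat T_{L/k}\otimes\ent_{(\ell)} \simeq P_\ell \oplus (\hat T_{L[\ell]/k}\otimes\ent_{(\ell)})$; then Shapiro plus Chebotarev kill $\sha^2$ of the (stably) permutation summand. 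I checked the key computation ($N_K = m_\ell e_\ell N_{K[\ell]}$, equivariance of the idempotent splitting and of the rescaling, and the cokernel identification), and it works, including the case $\ell \notin \mathcal P$ and the verification that $\sha^2(k,\hat T_{L[\ell]/k})$ is $\ell$-primary. What your approach buys: it is self-contained, it proves the decomposition at the level of lattices up to direct summands of permutation modules, and therefore it would also yield the companion statements for $\sha^2_{\rm cycl}$ and $\sha^2_\omega$ (Propositions \ref{reduction to prime power degree - cyclic sha} and \ref{reduction to prime power degree - sha omega}) directly, without the paper's detour through Fr\"ohlich's realization of $G$ as an unramified Galois group. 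What the paper's route buys is brevity, given that \cite{BLP} is already a standing reference for the rest of the argument.

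One point to tighten: the claim that $-\otimes\ent_{(\ell)}$ ``commutes with $\sha^2$ by flatness'' is not quite right as stated, because $\sha^2$ is a kernel into an \emph{infinite} product over places, and localization does not commute with infinite products. The fix is immediate: $H^2(k,\hat T)$ and all $H^2(k_v,\hat T)$ are torsion and the localization maps preserve primary components (and $\sha^2(k,\hat T)$ is in fact finite), so an $\ell$-primary class is locally trivial if and only if it dies in every $H^2(k_v,\hat T)[\ell^\infty]$; this identifies $\sha^2(k,\hat T)[\ell^\infty]$ with $\sha^2(k,\hat T\otimes\ent_{(\ell)})$, which is all your argument needs.
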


\noindent {\bf Proof.}
This follows from  \cite {BLP}, Lemma 3.1 and Proposition 5.16.

\medskip
Let $k’/k$ be
a Galois extension of minimal degree splitting $T_{L/k}$, and let $G = {\rm Gal}(k’/k)$.

\begin{prop}\label{reduction to prime power degree - cyclic sha}
 We have $\sha_{\rm cycl}^2(G,\hat T_{L/k})\simeq
 \underset
{p \in \mathcal P} \oplus \sha^2_{\rm cycl}(G,\hat T_{L[p]/k}).$

\end{prop}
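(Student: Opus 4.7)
The idea is to upgrade the proof of Proposition~\ref{reduction to prime power degree - sha} (which, via BLP Proposition~5.16, used a global input) to a purely group-theoretic statement, by noting that the lattice/cohomological decomposition furnished by \cite{BLP}, Lemma~3.1, is natural in the acting group. Concretely, the plan is to produce a commutative square with isomorphisms on both vertical sides, in which the two horizontal arrows are the restriction-to-cyclic-subgroups maps whose kernels are, by definition, $\sha^2_{\rm cycl}(G,\hat T_{L/k})$ and $\bigoplus_p \sha^2_{\rm cycl}(G,\hat T_{L[p]/k})$.

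First I would invoke \cite{BLP}, Lemma~3.1, to obtain, for every subgroup $G'\subseteq G$, an isomorphism
$$\varphi_{G'}\colon H^2(G',\hat T_{L/k})\;\simeq\;\bigoplus_{p\in\mathcal P}H^2(G',\hat T_{L[p]/k}),$$
and then I would verify that these $\varphi_{G'}$ are compatible with restriction maps $\mathrm{res}^G_{G'}$. This naturality should be visible from the construction in \cite{BLP}: if the decomposition arises from a splitting of $\hat T_{L/k}$ up to a direct summand of permutation type, then it is automatically functorial in the group, hence respects restriction to any (in particular, any cyclic) subgroup.

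Next I would apply $\varphi_G$ together with $\varphi_{\langle g\rangle}$ for every $g\in G$ to form the commutative diagram
$$\begin{CD}
H^2(G,\hat T_{L/k}) @>>> \prod_{g\in G}H^2(\langle g\rangle,\hat T_{L/k}) \\
@VV{\varphi_G}V  @VV{\prod_g\varphi_{\langle g\rangle}}V \\
\bigoplus_p H^2(G,\hat T_{L[p]/k}) @>>> \prod_{g\in G}\bigoplus_p H^2(\langle g\rangle,\hat T_{L[p]/k})
\end{CD}$$
in which the horizontal maps are the products of restrictions. The kernel of the top row is $\sha^2_{\rm cycl}(G,\hat T_{L/k})$ by definition. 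The kernel of the bottom row equals $\bigoplus_p \sha^2_{\rm cycl}(G,\hat T_{L[p]/k})$, since the direct sum is finite (indexed by $\mathcal P$) and therefore commutes with the kernel of a map into a product indexed by $g\in G$. Because the vertical arrows are isomorphisms, the kernels are isomorphic, which is the desired conclusion.

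The main obstacle is confirming that the decomposition supplied by \cite{BLP}, Lemma~3.1, is indeed natural with respect to restriction to subgroups of $G$, and in particular to the cyclic subgroups $\langle g\rangle$; everything else is formal. If Lemma~3.1 is stated only as an abstract isomorphism, one may need to trace through its construction (or invoke a lattice-level splitting up to stably permutation summands, which contribute trivially to $H^2$ of any subgroup) to obtain this compatibility.
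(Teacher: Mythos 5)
Your argument stands or falls with the claimed input from \cite{BLP}, Lemma 3.1: an isomorphism $H^2(G',\hat T_{L/k})\simeq \bigoplus_{p\in\mathcal P}H^2(G',\hat T_{L[p]/k})$ for every subgroup $G'\subseteq G$, natural in $G'$. That lemma provides no such thing: in \cite{BLP} it is an arithmetic statement over a global field (it identifies $\sha^2(k,\hat T_{L/k})$ with the obstruction group for the multinorm principle), and the prime-by-prime decomposition (\cite{BLP}, Proposition 5.16) likewise concerns these $\sha$-groups, not the full $H^2$. Worse, the $H^2$-level isomorphism you want is simply false, already for $G'=G$. Take $K/k$ cyclic of degree $pq$ and $K'=K_1=K[p]$, so that $G=\Gal(K/k)$ is cyclic of order $pq$, $L=K\times K[p]$, $L[p]=K[p]\times K[p]$ and $L[q]=K[q]\times K[p]$. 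Using the exact sequence $0\to\mathbf{Z}\to\mathbf{Z}[G/H]\oplus\mathbf{Z}[G/H_1]\to\hat T_{L/k}\to 0$ (and its analogues for $L[p]$, $L[q]$), Shapiro's lemma and the periodicity of the cohomology of cyclic groups, one finds $H^2(G,\hat T_{L/k})=0$ and $H^2(G,\hat T_{L[q]/k})=0$, while $H^2(G,\hat T_{L[p]/k})\simeq\mathbf{Z}/q\mathbf{Z}$; so your vertical arrow $\varphi_G$ cannot exist (the Proposition itself is unaffected here, since $G$ is cyclic and all the groups $\sha^2_{\rm cycl}$ vanish). Your fallback is also incorrect: permutation summands do \emph{not} contribute trivially to $H^2$ of subgroups --- by Shapiro--Mackey, $H^2(G',\mathbf{Z}[G/H])$ is a direct sum of character groups of subgroups, generally nonzero; permutation lattices are cohomologically invisible in degree $1$, not in degree $2$. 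Thus the decomposition genuinely exists only after passing to the classes killed on all cyclic subgroups, and producing it there is exactly the content of the Proposition; it cannot be extracted formally from an $H^2$-level splitting.

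The paper's proof avoids any such claim by an arithmetic specialization: by Fr\"ohlich's theorem \cite{F} one realizes $G$ as the Galois group of an \emph{unramified} extension $\ell'/\ell$ of number fields, so that $\hat T_{L/k}$ and the $\hat T_{L[p]/k}$ are realized as the lattices $\hat T_{E/\ell}$ and $\hat T_{E[p]/\ell}$ of multinorm tori over $\ell$; since $\ell'/\ell$ is unramified, \cite{BP}, Proposition 3.1 gives $\sha^2_{\rm cycl}(G,M)\simeq\sha^2(\ell,M)$ for these lattices, and then the global decomposition of Proposition \ref{reduction to prime power degree - sha} (which is where \cite{BLP}, Lemma 3.1 and Proposition 5.16 are actually used) transfers back to $\sha^2_{\rm cycl}$. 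If you want to keep a purely group-theoretic strategy, you would need a decomposition of $\sha^2_{\rm cycl}$ itself (equivalently, of $H^1(G,F)$ for a flasque resolution of the lattices), and that is not formal; the unramified global realization is precisely the device the paper uses to sidestep this.
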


\noindent
{\bf Proof.} Let us write $K’ = \underset{i \in I} \prod K_i$, where the $K_i$ are
finite field extensions of $k$.
Let $H$ be the subgroup
of $G$ such that $K = (k’)^H$, and for all $i \in I$, let $H_i$ be the subgroup of $G$ such that $K_i = (k’)^{H_i}$.
Set $M = \hat T_{L/k}$. We
have the exact sequence of $G$-modules

$$0 \to {\bf Z} \to  {\bf Z}[G/H]  \oplus \underset {i \in I} \oplus {\bf Z}[G/H_i] \to M \to 0.$$

For all $p \in \mathcal P$, let $H[p]$ be the subgroup of $G$ such that $K[p] = (k’)^{H[p]}$. Set $M[p] = \hat T_{L[p]/k}$.
We
have the exact sequence of $G$-modules

$$0 \to {\bf Z} \to  {\bf Z}[G/H[p]]  \oplus \underset {i \in I} \oplus {\bf Z}[G/H_i] \to M[p] \to 0.$$

\medskip
Let  $\ell’/\ell$ be an unramified extension of number fields with Galois group $G$ (cf. \cite{F}).
Set $L_0 = (\ell’)^{H}$, $L_0[p] = (\ell’)^{H[p]}$, and $L_i = (\ell’)^{H_i}$. Let $E = L_0 \times  \underset{i \in I} \prod L_i$
and $E[p] = L_0[p] \times  \underset{i \in I} \prod L_i$. We have $\hat T_{E/\ell} \simeq M$ and
$\hat T_{E[p]/\ell} \simeq M[p]$. By Proposition \ref{reduction to prime power degree - sha} we have
$\sha^2(\ell,M) \simeq \underset
{p \in \mathcal P} \oplus \sha^2(\ell, M[p]).$ Since $\ell’/\ell$ is unramified, we have $\sha_{\rm cycl}^2(G,M) \simeq \sha^2(\ell,M)$ and $\sha_{\rm cycl}^2(G,M[p]) \simeq \sha^2(\ell,M[p])$ (see \cite {BP}, Proposition 3.1), hence
$\sha_{\rm cycl}^2(G,M) \simeq \underset
{p \in \mathcal P} \oplus \sha^2_{\rm cycl}(G, M[p]).$

\begin{prop}\label{reduction to prime power degree - sha omega} Assume that $k$ is a global field We have

  $$\sha_{\omega}^2(k,\hat T_{L/k}) \simeq \underset
{p \in \mathcal P} \oplus \sha_{\omega}^2(k, \hat T_{L[p]/k}).$$

\end{prop}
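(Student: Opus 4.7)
The idea is to reduce this statement to Proposition \ref{reduction to prime power degree - cyclic sha} via the standard Chebotarev-type identification $\sha^2_\omega(k,M)\simeq \sha^2_{\mathrm{cycl}}(G,M)$, valid for any $\cG_k$-lattice $M$ whose action factors through the Galois group $G=\Gal(k'/k)$ of a suitable finite Galois extension $k'/k$. Once this identification is in place, the decomposition transfers directly from the cyclic-Sha side, which we have already established.

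\textbf{Setup.} Let $k'/k$ be a finite Galois extension splitting $T_{L/k}$, and put $G=\Gal(k'/k)$. Since each $K[p]\subset K\subset k'$, the lattices $\hat T_{L/k}$ and $\hat T_{L[p]/k}$ are all naturally $G$-lattices, so Proposition \ref{reduction to prime power degree - cyclic sha} applies to them with this common $G$.

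\textbf{Key identification.} For almost all places $v$ of $k$ the extension $k'/k$ is unramified at $v$; fixing a place of $k'$ above $v$ gives a Frobenius $\mathrm{Frob}_v\in G$, and at such $v$ the restriction map $H^2(k,M)\to H^2(k_v,M)$ factors as
$$H^2(k,M)=H^2(G,M)\longrightarrow H^2(\langle\mathrm{Frob}_v\rangle,M)\longrightarrow H^2(k_v,M),$$
by inflation from the unramified local Galois group. Conversely, Chebotarev's density theorem ensures that every cyclic subgroup $\langle g\rangle\leq G$ arises as $\langle\mathrm{Frob}_v\rangle$ for infinitely many unramified $v$. Combining both directions yields $\sha^2_\omega(k,M)\simeq \sha^2_{\mathrm{cycl}}(G,M)$, applied to $M=\hat T_{L/k}$ and to each $M=\hat T_{L[p]/k}$.

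\textbf{Conclusion and main obstacle.} Feeding these isomorphisms into Proposition \ref{reduction to prime power degree - cyclic sha} gives the desired decomposition
$\sha_{\omega}^2(k,\hat T_{L/k}) \simeq \bigoplus_{p \in \mathcal P}\sha_{\omega}^2(k,\hat T_{L[p]/k}).$
The delicate point is the identification step: one direction rests on the inflation description of restriction at unramified places, and the other on Chebotarev to exhaust all cyclic subgroups of $G$ by Frobenii. Once this standard identification is granted, the result reduces to a formal consequence of the cyclic-Sha decomposition already proved.
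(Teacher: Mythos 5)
Your argument is essentially the paper's: the proof there consists of quoting Proposition \ref{reduction to prime power degree - cyclic sha} together with the identification $\sha_{\omega}^2(k,M)\simeq\sha^2_{\rm cycl}(G,M)$ for a $G$-lattice $M$ split by $k'/k$ (cited as \cite{BP}, Corollary 3.4), which is exactly the ``key identification'' you sketch via Frobenius elements and Chebotarev. The only imprecision is writing $H^2(k,M)=H^2(G,M)$: inflation is injective (since $H^1(k',M)=0$ for a lattice) but not onto, and the fact that every almost-everywhere locally trivial class is inflated from $G$ is part of the standard result you invoke, not automatic.
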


\noindent
{\bf Proof.} This follows from Proposition \ref{reduction to prime power degree - cyclic sha} and \cite {BP}, Corollary 3.4.

\bigskip
{\bf The prime power degree case}

\medskip
Let $p$ be a prime number, and assume that {\it $K/k$ is cyclic of degree a power of $p$}. Let us write $K’ = \underset{i \in I} \prod K_i$, where the $K_i$ are
finite field extensions of $k$, and let $[K:k] = p^n$.

\begin{notation}\label {prime power notation} For all integers $1 \le m \le n$, let $K(m)$ be the unique subfield of $K$ of degree $p^m$ over $k$. The $K_i$-algebra $K(m) \otimes_k K_i$ is a product of cyclic extensions of $K_i$; let $p^{e_i(m)}$
be the degree of these extensions, and set $\mathcal E(m) = \{e_i(m) \ | \ i \in I \}$. For all $i \in I$, let
us chose one of the cyclic factors  $E_i/K_i$  of $K\otimes_k K_i$.
For all $m$ let $E_i(m)$ be the subfield of $ E_i$ which corresponds to a cyclic factor of $K(m)\otimes_k K_i$.

\medskip Let $\cK$ be a Galois extension of $k$ containing $K$ and all the fields $K_i$, and let
$G = {\rm Gal}(\cK/k)$. If $F$ is a subfield of $\cK$, we denote by $G_F$ the subgroup of $G$ such
that $F = \cK^{G_F}$.

\medskip
For all integers $0 \le m \le n$, let $\Gamma_i^m$ be the set of conjugacy classes of elements $g \in G$ such that
$\langle g\rangle \cap (G_{E_i(n-m)} \setminus G_{E_i(n-m+1)})\neq \emptyset$.

\end{notation}

\begin{notation}\label{global} Assume moreover that $k$ is a global field. Let $V_i^m$ be the set of places $v$ of $k$ such that there exists a place $w$ of $K_i$ above $v$ having the property that
$K \otimes_{k} (K_i)_w$ is a product of fields extensions of degree \emph{at least} $p^m$ of $(K_i)_w$.

\end{notation}

\begin{prop}\label{v et gamma}
Assume that $k$ is a global field.
Let $V_{rm}$ be the set of places of $k$ which are ramified in $\cK$.
For all integers $0 \le m \le n$, sending
a place $v \in V_i^m\setminus V_{rm}$ to the conjugacy class of its Frobenius element $\ff_v \in G$ gives rise to a surjection
from $V_i^m\setminus V_{rm}$ onto $\Gamma_i^m$.

\end{prop}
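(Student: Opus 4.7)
The plan is to translate the condition $v \in V_i^m$ into a statement about conjugacy classes in $G$, so that it matches the definition of $\Gamma_i^m$ on the nose. For brevity write $H = G_K$, $H(\ell) = G_{K(\ell)}$ and $H_i = G_{K_i}$; since $K/k$ is Galois, $H$ is normal in $G$ and contained in each $H(\ell)$. The cyclic factor $E_i$ of $K \otimes_k K_i$ identifies with the compositum $K\cdot K_i = \cK^{H \cap H_i}$, so $G_{E_i(\ell)} = H(\ell) \cap H_i$ and $\Gal(E_i/K_i) = H_i/(H \cap H_i)$ is cyclic of order $p^{e_i(n)}$.

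For $v \in V_k \setminus V_{rm}$, I would fix a place $\tilde v$ of $\cK$ above $v$ and let $\ff_v \in G$ be its Frobenius. The places $w$ of $K_i$ above $v$ correspond bijectively to the double cosets $H_i \backslash G / \langle \ff_v \rangle$: the class of $g$ gives $\tilde w := g\tilde v$, whose decomposition group in $H_i$ is $H_i \cap \langle g \ff_v g^{-1}\rangle$. Since $K/k$ is Galois, every cyclic factor of the $K_i$-algebra $K \otimes_k K_i$ is isomorphic to $E_i$, and hence
\[
K \otimes_k (K_i)_w \;\cong\; \prod_{W \mid w} (E_i)_W^{\,p^{n-e_i(n)}},
\]
with all factors of the same local degree $f_w$ over $(K_i)_w$. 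A direct computation identifies $f_w$ with the order of the image of $H_i \cap \langle g \ff_v g^{-1}\rangle$ in $H_i/(H \cap H_i)$. Therefore $v \in V_i^m$ if and only if, for some $g \in G$ and with $c := g \ff_v g^{-1}$ and $\alpha$ a generator of $\langle c\rangle \cap H_i$, the image $\alpha H \in G/H$ has order at least $p^m$.

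The crux is the equivalence of this last condition with $[\ff_v] \in \Gamma_i^m$. Since $G/H = \Gal(K/k)$ is cyclic of order $p^n$ with unique subgroup of order $p^{n-\ell}$ equal to $H(\ell)/H$, the image $\alpha H$ has order at least $p^m$ exactly when some power $\alpha^s$ has order precisely $p^m$, i.e.\ $\alpha^s \in H(n-m) \setminus H(n-m+1)$. Since $\alpha^s \in \langle c\rangle \cap H_i$, this provides an element of $\langle c\rangle \cap \bigl(G_{E_i(n-m)} \setminus G_{E_i(n-m+1)}\bigr)$; conversely, any such element is a power of a generator of $\langle c\rangle \cap H_i$ whose image in $G/H$ has order at least $p^m$. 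This shows the map $v \mapsto [\ff_v]$ is well defined and takes values in $\Gamma_i^m$. Surjectivity is then immediate: given any $[c] \in \Gamma_i^m$, the Chebotarev density theorem produces $v \notin V_{rm}$ with $\ff_v \in [c]$, and the equivalence forces $v \in V_i^m$.

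I expect the main obstacle to be the bookkeeping of the cyclic $p$-group in the third paragraph---verifying that the integer ``order of $\alpha H$ in $G/H$'' really is the local degree $f_w$ at the place $w$, and that the ``some conjugate $g\ff_v g^{-1}$'' quantifier, which arises from letting $w$ vary above $v$, matches the ``some representative of the conjugacy class'' quantifier implicit in the definition of $\Gamma_i^m$.
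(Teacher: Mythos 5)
Your proof is correct and follows essentially the same route as the paper: both translate membership in $V_i^m$ into a statement about decomposition groups generated by Frobenius elements, use the cyclic $p$-group structure of $\Gal(E_i/K_i)\subseteq G/H$ to match it with the defining condition of $\Gamma_i^m$, and obtain surjectivity from Chebotarev. The only organizational difference is that you inline, via the double-coset description of places of $K_i$ above $v$, what the paper delegates to its auxiliary Lemma \ref{inert} and Proposition \ref{inert bis} on inertness in the prime-degree step $E_i(n-m+1)/E_i(n-m)$.
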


In order to prove the Proposition, we need the following lemma.

\begin{lemma}\label{inert}
 Let $F$ be a field, and let $E$ be a cyclic extension of $F$ of prime degree. Let $M$ be an extension of $E$,
 and assume that $M$ is a Galois extension of $F$.
Set $G_F = {\Gal}(M/F)$ and $G_E = {\Gal}(M/E)$. Let $v :  M^{\times} \to {\bf Z}$ be a discrete
valuation of $M$; assume that the restriction $v_F$ of $v$ to $F^{\times}$ is surjective, and that the residue field
of $v$ is perfect.

\medskip Let $D_{M/F}$
be the decomposition group of $v$. Then $v_F$ is inert in $E$ if and only if $D_{M/F} \cap (G_F \setminus G_E) \not =
\varnothing.$

\end{lemma}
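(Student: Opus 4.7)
The plan is to recast the lemma in terms of the Galois theory of valuations on the Galois extension $M/F$. First I would extract from the hypotheses that $v|_F$ is unramified in $M$: since $v$ is discrete on $M$ with values in $\ent$ and $v_F$ is surjective onto $\ent$, the value group $v(M^\times)$ already contains $v(F^\times) = \ent$, so $e(v/v_F) = 1$. Thus $M/F$ (and \emph{a fortiori} $E/F$) is unramified at $v_F$, i.e.\ the inertia subgroup $I_{M/F}$ is trivial. The assumption that the residue field $\kappa(v)$ is perfect ensures that the residue extension is separable, so that $D_{M/F}$ is canonically identified with $\Gal(\kappa(v)/\kappa(v_F))$ and the classical formulas of the decomposition theory apply.

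Next, I would express the number of places of $E$ lying above $v_F$ in group-theoretic terms. Since $M/F$ is Galois and $v$ is fixed, the places of $M$ above $v_F$ correspond to $G_F/D_{M/F}$, and their images in $E$ correspond to the double coset space $G_E\backslash G_F/D_{M/F}$, equivalently to the orbits of $D_{M/F}$ acting by left multiplication on $G_F/G_E = \Gal(E/F)$. Since $[E:F]$ is prime, the group $G_F/G_E$ has no non-trivial proper subgroups, so the image of $D_{M/F}$ in $G_F/G_E$ is either the whole group (one orbit, i.e.\ a unique place above $v_F$) or trivial (every coset fixed, i.e.\ $v_F$ splits completely). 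By definition, the condition $D_{M/F} \cap (G_F \setminus G_E) \neq \varnothing$ says exactly that $D_{M/F}$ is not contained in $G_E$, i.e.\ its image in $G_F/G_E$ is non-trivial, which in the prime degree case is the same as this image being everything.

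Combining the two paragraphs finishes the argument: $v_F$ is inert in $E$ exactly when it is unramified with a unique extension of residue degree $[E:F]$; unramifiedness is automatic by the first paragraph, and in a prime degree extension the existence of a single place above $v_F$ forces the residue degree to equal $[E:F]$. Hence inertness is equivalent to the uniqueness of the extension, which by the second paragraph is equivalent to $D_{M/F} \cap (G_F \setminus G_E) \neq \varnothing$. There is essentially no real obstacle; the only subtlety is the invocation of the perfect residue field hypothesis to guarantee the residue extension is separable, placing us in the classical setting where the decomposition group directly controls the splitting behaviour.
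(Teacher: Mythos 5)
Your argument is correct and follows essentially the same route as the paper: both reduce the question to whether the image of $D_{M/F}$ in $\Gal(E/F)$ is trivial or not, with the prime degree giving the trivial-or-everything dichotomy. The only real difference is that where the paper simply cites Serre (\emph{Local Fields}, Ch.~I, Prop.~22) for the fact that this image is the decomposition group $D_{E/F}$ of $v_F$ in $E/F$, you rederive the count of places via the double-coset/orbit description and spell out the unramifiedness and the fundamental identity underlying ``unique place above $v_F$ $=$ inert'', details the paper leaves implicit.
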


\noindent
{\bf Proof.} Let $G_{E/F}$ be the Galois group of the extension $E/F$, and let $D_{E/F}$ be the decomposition group
of $v_F$; note that  $v_F$ is inert in $E$ if and only if $D_{E/F} = G_{E/F}$. Since $E/F$ is cyclic of prime degree,
this amounts to saying that $D_{E/F}$ is not trivial.

\medskip
We have the exact sequence $$1 \to G_E \to G_F \to G_{E/F} \to 1.$$
The image of $D_{M/F}$
by the homomorphism $G_F \to G_{E/F}$ is equal to $D_{E/F}$ (see for instance \cite {corps locaux}, Chap. I, Proposition 22). Hence $D_{E/F}$ is non trivial if and only if $D_{M/F} \cap (G_F \setminus G_E) \not =\varnothing.$

\begin {prop}\label{inert bis}
Let $F$, $E$ and $M$ be as in Lemma \ref{inert}. Assume moreover that
$k$ is a subfield of $F$, and that $M$ is a Galois extension of $k$. Let $G = \Gal(M/k)$.
Let $v_k : k^{\times} \to {\bf Z}$ be a discrete valuation such that the extensions of $v_k$ to $M$ are
unramified; let $\mathcal D$ be the set of corresponding decomposition groups. The following are equivalent

\medskip
{\rm (a)} There exists an extension of $v_k$ to $F$ that is inert in $E$.

\medskip
{\rm (b)} There exists $D \in \mathcal D$ such that $D \cap (G_F \setminus G_E) \not =
\varnothing.$

\end{prop}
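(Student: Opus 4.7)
The plan is to deduce Proposition \ref{inert bis} directly from Lemma \ref{inert} by exploiting the standard compatibility between decomposition groups in a tower $k \subset F \subset M$. The key identity I will use is that if $v$ is a valuation on $M$ extending $v_k$, with decomposition group $D_{M/k} \subset G$, and if $v_F$ denotes the restriction of $v$ to $F$, then the decomposition group of $v$ regarded as a valuation over $F$ is
\[
D_{M/F} \;=\; D_{M/k} \cap G_F.
\]
This is immediate from the definition of decomposition groups.

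For (a) $\Rightarrow$ (b): I will start from an extension $v_F$ of $v_k$ to $F$ that is inert in $E$, lift it to a valuation $v$ on $M$ (possible because $M/F$ is Galois, since $M/k$ is Galois and $F \subset M$), and note that $v$ is automatically an extension of $v_k$, so its decomposition group $D_{M/k}$ belongs to $\mathcal D$. Since $v_k$ is unramified in $M$, the extension $v$ is also unramified, so Lemma \ref{inert} applies to the tower $F \subset E \subset M$ with the valuation $v$, and yields $D_{M/F} \cap (G_F \setminus G_E) \neq \varnothing$. Using the identity above and the obvious containment $G_F \setminus G_E \subset G_F$, this reads $D_{M/k} \cap (G_F \setminus G_E) \neq \varnothing$, giving (b) with $D = D_{M/k}$.

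For (b) $\Rightarrow$ (a): given $D \in \mathcal D$ with $D \cap (G_F \setminus G_E) \neq \varnothing$, I pick an extension $v$ of $v_k$ to $M$ whose decomposition group is $D$, and let $v_F = v|_F$. The same identity gives $D_{M/F} = D \cap G_F \supset D \cap (G_F \setminus G_E)$, so $D_{M/F} \cap (G_F \setminus G_E) \neq \varnothing$, and Lemma \ref{inert} (applied to $v$ viewed over $F$) concludes that $v_F$ is inert in $E$.

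I don't expect any real obstacle: the bulk of the work is already in Lemma \ref{inert}, and the remaining content is the decomposition-group bookkeeping above, together with the routine verification that the hypotheses of the lemma (surjectivity of $v_F$ after normalization and perfectness of the residue field) are preserved when passing from the $v_k$-situation to the $v_F$-situation, which is automatic under the unramifiedness assumption on $v_k$ in $M$.
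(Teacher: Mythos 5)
Your proposal is correct and follows essentially the same route as the paper: extend/restrict the valuation through the tower $k \subset F \subset M$ and invoke Lemma \ref{inert} in both directions. The only difference is cosmetic -- you spell out the identity $D_{M/F} = D_{M/k} \cap G_F$ and the inclusion $G_F \setminus G_E \subset G_F$, which the paper's proof leaves implicit.
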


\noindent
{\bf Proof.} Let us prove that (a) implies (b). Let $v_F$ be an extension of $v_k$ to $F$ that is inert in $E$,
let $v$ be an extension of $v_F$ to $M$, and let $D$ be the decomposition group of $v$. By Lemma
\ref{inert}, we have (b). Conversely, assume that (b) holds. Let $D \in \mathcal D$ be as in (b), and let $v$
be the corresponding valuation. Let $v_F$ be the restriction of $v$ to $F$. By Lemma \ref{inert}, we see
that $v_F$ is inert in $E$, hence (a) holds.

\medskip
\noindent
{\bf Proof of Proposition \ref{v et gamma}} If $v \in V_i^m\setminus V_{rm}$, then by definition there exists a place of $E_i(n-m)$ that is inert in the extension $E_i(n)/E_i(n-m)$,
and hence also in $E_i(n-m+1)/E_i(n-m)$; therefore by Proposition \ref{inert bis} the conjugacy class of its Frobenius element $\ff_v$ belongs to $\Gamma_i^m$.

Conversely, if the conjugacy class of $g\in G$ belongs to $\Gamma_i^m$,
then by Chebotarev's density theorem there exists an umramified place $v$ such that its Frobenius element $\ff_v$ is the conjugacy class of $g$.
Since the conjugacy class of $g$ belongs to $\Gamma_i^m$, there is a place $w$ of $E_i(n-m)$ above $v$ such that $w$ is inert in $E_i(n-m+1)/E_i(n-m)$.
Therefore $w$ is also inert in
$E_i(n)/E_i(n-m)$ as $E_i(n)/E_i(n-m)$ is cyclic of $p$-power degree and is unramified at $w$. This implies that $v \in V_i^m\setminus V_{rm}$.

\medskip

We get immediately the following corollary.
\begin{coro}\label{intersection of gamma}
For $i$, $j\in I$, the map defined in Proposition \ref{v et gamma} induces a surjection from $V_i^m\cap V_j^m\setminus V_{rm}$ to
$\Gamma^m_i\cap\Gamma^m_j$.
\end{coro}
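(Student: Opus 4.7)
The plan is to observe that the statement is essentially immediate from the proof of Proposition \ref{v et gamma}, once one checks that the construction in the surjective half of that proof can be done uniformly in $i$ and $j$.

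First I would verify that the map is well-defined, i.e. that it really lands in $\Gamma_i^m \cap \Gamma_j^m$. This is a direct application of Proposition \ref{v et gamma}: if $v \in V_i^m \cap V_j^m \setminus V_{rm}$, then applying the proposition with the index $i$ gives $\ff_v \in \Gamma_i^m$, and applying it with $j$ gives $\ff_v \in \Gamma_j^m$.

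For surjectivity, given a conjugacy class $c \in \Gamma_i^m \cap \Gamma_j^m$, pick a representative $g \in G$. By Chebotarev's density theorem there is an unramified place $v$ of $k$ whose Frobenius conjugacy class $\ff_v$ equals $c$. Since $c \in \Gamma_i^m$, the argument in the second paragraph of the proof of Proposition \ref{v et gamma} (using Proposition \ref{inert bis} applied to $E_i(n-m+1)/E_i(n-m)$ together with the fact that $E_i(n)/E_i(n-m)$ is cyclic of $p$-power degree and unramified at the chosen place) shows that $v \in V_i^m \setminus V_{rm}$. Repeating the same argument with $j$ in place of $i$ gives $v \in V_j^m \setminus V_{rm}$, hence $v$ lies in $V_i^m \cap V_j^m \setminus V_{rm}$ and maps to $c$.

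The only thing to watch is that the \emph{same} place $v$ works for both indices; this is automatic because the Chebotarev step depends only on the conjugacy class of $g$ in $G$, not on the index, and the subsequent inertness conclusion for the two extensions $E_i(n)/E_i(n-m)$ and $E_j(n)/E_j(n-m)$ is read off from the same hypothesis on $g$. So there is no real obstacle; the statement is a formal consequence of Proposition \ref{v et gamma} together with Chebotarev.
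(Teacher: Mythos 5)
Your argument is correct and is exactly the intended one: the paper states the corollary as an immediate consequence of Proposition \ref{v et gamma}, whose proof (Chebotarev plus the inertness argument) applies verbatim to a single place simultaneously for both indices, just as you observe.
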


\begin{remark}\label{preimage of gamma}
Keep the notation in Proposition \ref{v et gamma}.
For each conjugacy class in $\Gamma_i^m$, by Chebotarev's density theorem there are infinite many unramified places $v\in V_i^m$ mapped to it.
\end{remark}

\section {Norm equations - unramified Brauer group}\label{unramified}

We keep the notation of the previous section. In particular, $k$ is a field, $K$ is a cyclic extension of $k$, and $L = K \times K’$ where $K’$ is an \'etale $k$-algebra of finite rank.  Let $T_{L/{k}}  = R^{(1)}_{L/k}({\bf G}_m)$ be the $k$-torus
defined by $$1 \to T_{L/{k}} \to R_{L/k}({\bf G}_m) {\buildrel {\rm N}_{L/{k}} \over \longrightarrow}  {\bf G}_m \to 1.$$

\medskip Let $a \in k^{\times}$, and let $X_a$ be the affine $k$-variety associated to the norm equation
${\rm N}_{L/k}(t) = a.$
The variety $X_a$ is a torsor under $T_{L/k}$. Let $T_{L/k}^c$ be a smooth compactification of $T_{L/k}$, and
let  $X_a^c$ be a smooth compactification of $X_a$.

\medskip The aim of this section is to describe the group ${\rm Br}(X_a^c)/{\rm Im}({\rm Br}(k))$.
Using the results of \S \ref{norm}, we can assume that $K/k$ is of degree $p^n$, where $p$ is a prime number.

\medskip
We use the notation of \S \ref{norm} (see Notation \ref{prime power notation}). In addition, we need the following

\begin{notation}\label{group}

For all integers $n \ge 1$, we denote by $C(I,{\bf Z}/p^n{\bf Z})$ the set of maps $I \to {\bf Z}/p^n{\bf Z} $.

If $1 \le m \le n$, let
 $\pi_{n,m}$ be the projection $C(I,{\bf Z}/p^n {\bf Z}) \to C(I,{\bf Z}/p^m {\bf Z})$.

 For $x\in {\bf Z}/p^m\bf{Z}$ and $y\in{\ent}/p^r{\bf Z}$, we denote by $\delta(x,y)$  the maximum integer $d\le \min\{m,r\}$ such that
 $x=y$ (mod $p^d\bf{Z}$).

\end{notation}

We start with some special cases, in which the results are especially simple.

\bigskip

{\bf $K/k$ cyclic of degree $p$}

\medskip
Assume first that $[K:k]= p$, and that $K$ is not contained in any of the fields $K_i$. Then for all $i \in I$, $E_i$ is
a cyclic field extension of degree $p$ of $K_i$. Let
 $\Gamma_i = \Gamma_i^1$ be the set of conjugacy classes of elements $g \in G$ such that
$\langle g\rangle \cap (G_{K_i} - G_{E_i})\neq \emptyset $ (cf. Notation \ref{prime power notation}).

\medskip

Let $C(L)$ be the  group
$$\{c \in C(I,{\bf Z}/p {\bf Z}) \ | \ c(i) = c(j) \ {\rm if} \ \Gamma_i \cap \Gamma_j \not = \varnothing \},$$

\smallskip
\noindent  and $D$ be the subgroup
of constant maps $I \to {\bf Z}/p{\bf Z}$.

\medskip
As a consequence of Theorem \ref{general}, we’ll show the following

\begin{prop}\label{prime cyclic sha}   Assume that $K/k$ is cyclic of degree $p$, and that $K$ is not contained in
any of the fields $K_i$. Then we have

 $$\sha^2_{\rm cycl}(G,\hat T_{L/k}) \simeq C(L)/D.$$

 \end{prop}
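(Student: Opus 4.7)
\medskip
\noindent
\textbf{Proof proposal.} My plan is a direct computation of $\sha^2_{\rm cycl}(G,\hat T_{L/k})$ from the standard presentation
\begin{equation*}
0 \to \mathbf{Z} \to \mathbf{Z}[G/H] \oplus \bigoplus_{i \in I} \mathbf{Z}[G/H_i] \to \hat T_{L/k} \to 0,
\end{equation*}
where $H = G_K$ and $H_i = G_{K_i}$. Taking $G$-cohomology and applying Shapiro's lemma together with $H^2(-,\mathbf{Z}) \simeq \Hom(-,\mathbf{Q}/\mathbf{Z})$, each class in $H^2(G,\hat T_{L/k})$ is represented by a tuple $\alpha = (\chi,(\chi_i)_{i \in I})$ with $\chi \in \Hom(H,\mathbf{Q}/\mathbf{Z})$ and $\chi_i \in \Hom(H_i,\mathbf{Q}/\mathbf{Z})$, modulo the simultaneous image under restriction from $\Hom(G,\mathbf{Q}/\mathbf{Z})$.

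I would next translate the defining condition of $\sha^2_{\rm cycl}$ into this picture. For each cyclic subgroup $\langle g\rangle \subseteq G$, restrict the exact sequence to $\langle g\rangle$ and decompose each $\mathbf{Z}[G/H_i]$ by Mackey; the cocycle attached to $\alpha$ vanishes on $\langle g\rangle$ iff there exists $\psi_g \in \Hom(\langle g\rangle,\mathbf{Q}/\mathbf{Z})$ whose restriction to each intersection $\langle g\rangle \cap {}^{\sigma}H$ (resp.\ $\langle g\rangle \cap {}^{\sigma}H_i$) coincides with ${}^{\sigma}\chi$ (resp.\ ${}^{\sigma}\chi_i$), for $\sigma$ running over a system of double-coset representatives. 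The hypothesis $[K:k] = p$ and $K \not\subseteq K_i$ now intervenes crucially: $G_{E_i} = H \cap H_i$ has index $p$ in $H_i$, so using the $\psi_g$'s coming from \emph{all} cyclic subgroups meeting $H_i \setminus G_{E_i}$ one shows that, up to the $G$-character ambiguity, $\chi_i$ is determined by the single scalar $c(i) \in \mathbf{Z}/p\mathbf{Z}$ equal to its value on a generator of $H_i/G_{E_i}$. The subgroup $\Hom(G,\mathbf{Q}/\mathbf{Z}) \subseteq \Hom(H,\mathbf{Q}/\mathbf{Z}) \oplus \bigoplus_i \Hom(H_i,\mathbf{Q}/\mathbf{Z})$ translates into simultaneous shifts of all $c(i)$ by one and the same element of $\mathbf{Z}/p\mathbf{Z}$, i.e.\ by the constant subgroup $D$.

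Finally, to identify the remaining condition as $c \in C(L)$: if the conjugacy class of $g$ lies in $\Gamma_i \cap \Gamma_j$, then (after conjugation) $\langle g\rangle$ meets both $H_i \setminus G_{E_i}$ and $H_j \setminus G_{E_j}$, and the existence of a single $\psi_g$ realizing both ${}^{\sigma_i}\chi_i$ and ${}^{\sigma_j}\chi_j$ on the corresponding intersections forces $c(i) \equiv c(j) \pmod p$. Conversely, given $c \in C(L)$, one assembles a compatible family $\{\psi_g\}$ indexed by conjugacy classes and lifts it to a tuple $(\chi,(\chi_i))$ whose class lies in $\sha^2_{\rm cycl}$. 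The main technical obstacle I expect is exactly this Mackey/double-coset bookkeeping on cyclic subgroups: one must verify that the full system of cyclic vanishing conditions collapses to precisely the pairwise equalities $c(i) = c(j)$ over the pairs with $\Gamma_i \cap \Gamma_j \neq \emptyset$ --- no more, no less --- so that the resulting map $\sha^2_{\rm cycl}(G,\hat T_{L/k}) \to C(L)/D$ is both well-defined and bijective.
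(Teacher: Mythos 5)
Your plan is a genuinely different route from the paper's: the paper obtains Proposition \ref{prime cyclic sha} as a special case of Theorem \ref{general}, whose proof is not a direct cohomological computation at all --- it realizes $G$ as $\Gal(\ell'/\ell)$ for an \emph{unramified} extension of number fields (Fr\"ohlich \cite{F}), identifies $\sha^2_{\rm cycl}(G,\hat T)$ with $\sha^2_{\omega}(\ell,\hat T)$, and then quotes the arithmetic computations of $\sha^2$ and $\sha^2_{\omega}$ of multinorm tori from \cite{BLP} and \cite{Lee}, translated into the combinatorial group $C(L)$ via Chebotarev (Proposition \ref{v et gamma}). A self-contained group-cohomological proof along your lines would be a real alternative, but as written it has two genuine gaps.

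First, the opening assertion that ``each class in $H^2(G,\hat T_{L/k})$ is represented by a tuple $(\chi,(\chi_i))$'' is false in general: from $0\to\mathbf{Z}\to P\to \hat T_{L/k}\to 0$ with $P=\mathbf{Z}[G/H]\oplus\bigoplus_i\mathbf{Z}[G/H_i]$, the cokernel of $H^2(G,P)\to H^2(G,\hat T_{L/k})$ is the kernel of $H^3(G,\mathbf{Z})\to H^3(G,P)$, which is typically nonzero --- already for $G\simeq(\mathbf{Z}/p\mathbf{Z})^2$ with $K,K_i$ degree-$p$ subfields of a bicyclic field, all of $H,H_i$ are cyclic, so $H^3(G,P)=0$ while $H^3(G,\mathbf{Z})\simeq\mathbf{Z}/p\mathbf{Z}$. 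Nor does membership in $\sha^2_{\rm cycl}$ rescue the claim automatically: since $H^3(C,\mathbf{Z})=0$ for every cyclic $C$, restriction to cyclic subgroups cannot detect the obstruction $\delta(\alpha)\in H^3(G,\mathbf{Z})$. So you must prove separately that $\sha^2_{\rm cycl}(G,\hat T_{L/k})$ lies in the image of $H^2(G,P)$; this is true, but deducing it by comparing cardinalities with $C(L)/D$ would be circular. Second, your invariant $c(i)$ --- ``the value of $\chi_i$ on a generator of $H_i/G_{E_i}$'' --- is not even defined unless $\chi_i$ kills $G_{E_i}=H\cap H_i$, which is not automatic; what the cyclic conditions actually give (take $C\subseteq H\cap H_i$ cyclic) is that $\chi$ and $\chi_i$ agree on $H\cap H_i$, together with a web of compatibilities among conjugates of all the $\chi_j$. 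Extracting from this a well-defined class in $C(I,\mathbf{Z}/p\mathbf{Z})$, showing the ambiguity is exactly $D$ (i.e. restrictions from $\widehat G$), and conversely verifying that the tuple attached to a given $c\in C(L)$ (say $\chi=0$ and $\chi_i$ pulled back from $\widehat{H_i/(H\cap H_i)}$ scaled by $c(i)$) vanishes on \emph{every} cyclic subgroup --- which is precisely where the condition $c(i)=c(j)$ whenever $\Gamma_i\cap\Gamma_j\neq\varnothing$ must be used, and only that --- is the entire content of the proposition; the proposal defers all of it to ``Mackey/double-coset bookkeeping'' without indicating how either direction is carried out.
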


By Theorem \ref{Colliot Br cycl}, this implies the following

\begin{coro} Assume that $K/k$ is cyclic of degree $p$, and that $K$ is not contained in
any of the fields $K_i$. Then we have

 $${\rm Br}(T^c)/{\rm Br}(k) \simeq C(L)/D.$$

\end{coro}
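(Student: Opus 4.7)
The plan is to obtain this corollary as an immediate juxtaposition of Theorem \ref{Colliot Br cycl} and Proposition \ref{prime cyclic sha}, with essentially no extra work beyond checking the hypothesis of the former.

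First I would verify that the character lattice $\hat T_{L/k}$ is indeed a $G$-lattice via a surjection $\cG_k \to G$. Since $\cK$ is a finite Galois extension of $k$ containing $K$ and all the $K_i$, the étale algebra $L = K \times K'$ is split by $\cK$, hence so is the torus $T_{L/k}$; accordingly the action of $\cG_k$ on $\hat T_{L/k}$ factors through the canonical surjection $\cG_k \twoheadrightarrow \Gal(\cK/k) = G$. This places us squarely in the setting required by Theorem \ref{Colliot Br cycl}.

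Applying that theorem to $T = T_{L/k}$ yields
$${\rm Br}(T^c)/{\rm Br}(k) \simeq \sha^2_{\rm cycl}(G,\hat T_{L/k}).$$
Under the standing hypotheses that $[K:k] = p$ and that $K$ is not contained in any of the $K_i$, Proposition \ref{prime cyclic sha} identifies the right-hand side with $C(L)/D$. Composing these two isomorphisms gives the desired statement.

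There is no genuine obstacle to overcome here, since all the substantive combinatorial content is packaged inside Proposition \ref{prime cyclic sha} (which itself rests on Theorem \ref{general}); the only point requiring a moment's thought is to confirm that $\cK$ is a splitting field of $T_{L/k}$, so that the hypothesis of Theorem \ref{Colliot Br cycl} actually applies, and this is immediate from the construction of $\cK$.
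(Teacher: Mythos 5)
Your proof is correct and is exactly the paper's argument: the corollary is stated there as an immediate consequence of Theorem \ref{Colliot Br cycl} combined with Proposition \ref{prime cyclic sha}. Your additional check that $\cK$ splits $T_{L/k}$, so that the $\cG_k$-action on $\hat T_{L/k}$ factors through $G=\Gal(\cK/k)$, is a sensible verification of the hypothesis of Theorem \ref{Colliot Br cycl} that the paper leaves implicit.
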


\bigskip
{\bf $K/k$ of degree $p^n$ and $K$ linearly disjoint of all the $K_i$}


\medskip
For all integers $m$ with $1 \le m \le n$ set

$$C^m = \{c \in C(I,{\bf Z}/p^{m} {\bf Z}) \ | c(i) = c(j) \ {\rm if} \
\Gamma_i^m \cap \Gamma_j^m \not = \varnothing \}.$$

\smallskip


\medskip
Set $$C(L) = \{ c \in C^n \ | \ \pi_{n,m}(c) \in C^m \ {\rm for} \ {\rm all} \ m \le n \},$$
\noindent
and denote by $D$  the subgroup of constant maps $I \to {\bf Z}/p^{n} {\bf Z}$.

\begin{prop}\label{prime power}  Assume that $K/k$ is cyclic of degree $p^n$, and that $K$ is linearly
disjoint of all the fields $K_i$. Then we have

$$\sha^2_{\rm cycl}(G,\hat T_{L/k}) \simeq C(L)/D.$$

 \end{prop}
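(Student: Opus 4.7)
The plan is to compute $\sha^2_{\rm cycl}(G,\hat T_{L/k})$ directly from the standard presentation
$$0 \to {\bf Z} \to {\bf Z}[G/H] \oplus \bigoplus_{i \in I} {\bf Z}[G/H_i] \to \hat T_{L/k} \to 0, \qquad (*)$$
with $H = G_K$ and $H_i = G_{K_i}$. Linear disjointness of $K$ from each $K_i$ forces $G = H\cdot H_i$, so $E_i = K\cdot K_i$ is cyclic of degree $p^n$ over $K_i$, the surjection $H_i\twoheadrightarrow H_i/(H_i\cap H)\cong G/H$ has cyclic image of order $p^n$, and the tower $K_i=E_i(0)\subset\cdots\subset E_i(n)=E_i$ corresponds under Galois correspondence to a strictly decreasing filtration of $H_i$. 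Taking $J$-cohomology of $(*)$ for any subgroup $J\le G$, using that $H^1$ of a permutation module is zero, and invoking Shapiro and Mackey, gives
$$0 \to H^1(J,\hat T_{L/k}) \to H^2(J,{\bf Z}) \xrightarrow{\alpha_J} \Phi_J \to H^2(J,\hat T_{L/k}) \to H^3(J,{\bf Z}),$$
where $\Phi_G=H^2(H,{\bf Z})\oplus\bigoplus_i H^2(H_i,{\bf Z})$ and, for $J=\langle g\rangle$ cyclic, $\Phi_{\langle g\rangle}$ decomposes as a direct sum over double cosets $\langle g\rangle\backslash G/H$ and $\langle g\rangle\backslash G/H_i$. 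Because $H^3(\langle g\rangle,{\bf Z})=0$, one has $H^2(\langle g\rangle,\hat T_{L/k})=\coker\alpha_{\langle g\rangle}$, so $\sha^2_{\rm cycl}$ is computed by comparing $\coker\alpha_G$ and the family of $\coker\alpha_{\langle g\rangle}$ via restrictions.

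Choose a generator $\chi\in\Hom(G/H,\rat/\ent)\cong{\bf Z}/p^n{\bf Z}$, with inflation $\chi_G\in\Hom(G,\rat/\ent)=H^2(G,{\bf Z})$; it restricts to $0$ on $H$ and to $\chi_i:=\chi\circ(H_i\twoheadrightarrow H_i/(H_i\cap H))$ on each $H_i$. A map $c:I\to{\bf Z}/p^n{\bf Z}$ then defines $(0,(c(i)\chi_i))\in\Phi_G$. A constant map $c\equiv c_0$ gives exactly $\alpha_G(c_0\chi_G)$, and conversely any element of $\img\alpha_G$ with vanishing $H$-component comes from $c_0\chi_G$ (since $\eta|_H=0$ means $\eta$ factors through $G/H$, which is cyclic of order $p^n$). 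This yields a well-defined injection $C(I,{\bf Z}/p^n{\bf Z})/D\hookrightarrow H^2(G,\hat T_{L/k})$, and the task is to show that its intersection with $\sha^2_{\rm cycl}$ is precisely $C(L)/D$.

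The combinatorial heart is to translate the vanishing of the restriction of $(0,(c(i)\chi_i))$ to each $\langle g\rangle$ into the $C^m$ conditions. By Mackey, the restriction of $c(i)\chi_i$ to the summand indexed by the double coset $\langle g\rangle a H_i$ is a character of $\langle g\rangle\cap aH_ia^{-1}$ whose order is governed by where $\langle g\rangle$ sits inside the filtration $H_i\supset G_{E_i(1)}\supset\cdots\supset G_{E_i(n)}$; namely, the restriction becomes divisible by $p^m$ precisely when $\langle g\rangle\cap aH_ia^{-1}\subset G_{E_i(n-m)}$, and nontrivial mod $p^m$ precisely when a conjugate of a power of $g$ lies in $G_{E_i(n-m)}\setminus G_{E_i(n-m+1)}$, i.e.\ when the conjugacy class of $g$ lies in $\Gamma_i^m$ (Notation \ref{prime power notation}). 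Matching the contributions from different $i$'s at a given $g$, vanishing of the full restriction at level $m$ forces $\pi_{n,m}(c(i))=\pi_{n,m}(c(j))$ whenever $\Gamma_i^m\cap\Gamma_j^m\neq\varnothing$; ranging $g$ over $G$ and $m$ over $\{1,\dots,n\}$, this is exactly $c\in C(L)$. Conversely any $c\in C(L)$ yields a class whose restriction to every cyclic subgroup vanishes.

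The main obstacle is this Mackey/filtration step: one must verify explicitly that the tower $E_i(0)\subset\cdots\subset E_i(n)$, translated under Galois correspondence into a chain of subgroups of $H_i$, is compatible with the $p$-adic filtration $p^m{\bf Z}/p^n{\bf Z}$ of the target of $\chi$, so that level-$m$ cohomological obstruction to cyclic triviality is detected exactly by the sets $\Gamma_i^m$ and the projections $\pi_{n,m}$. Once this matching is in hand, the identification of the kernel with $D$ and the verification that the residual contribution of $\ker(H^3(G,{\bf Z})\to H^3(G,P))$ to $\sha^2_{\rm cycl}$ is already captured (so no extra classes arise) follow formally from the exact sequence of Step 1 and the cyclic-subgroup version of $(*)$.
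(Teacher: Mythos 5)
Your plan—computing $\sha^2_{\rm cycl}(G,\hat T_{L/k})$ directly from the permutation presentation via Shapiro and Mackey—is genuinely different from the paper's route, which realizes $G$ as $\Gal(\ell'/\ell)$ for an unramified extension of number fields (\cite{F}), identifies $\sha^2_{\rm cycl}$ with $\sha^2_\omega(\ell,\cdot)$, and imports the arithmetic computations of \cite{BLP} and \cite{Lee}, matched with $C(L)$ through Chebotarev (Proposition \ref{v et gamma}). A direct proof along your lines may well be possible, but as written the surjectivity half has a genuine gap. You only exhibit classes lying in the image of $\Phi_G=\Hom(H,\rat/\ent)\oplus\bigoplus_{i\in I}\Hom(H_i,\rat/\ent)$, and only those of the special shape $(0,(c(i)\chi_i))$, and you dismiss the remaining part of $H^2(G,\hat T_{L/k})$—the part mapping onto $\ker(H^3(G,\ent)\to H^3(G,P))$—as following ``formally''. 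It does not: since $H^3(\langle g\rangle,\ent)=0$ for every cyclic subgroup, membership in $\sha^2_{\rm cycl}$ imposes no a priori constraint on the image of a class in $H^3(G,\ent)$, so such classes can only be excluded by actually computing their restrictions, which land in $\coker\,\alpha_{\langle g\rangle}$ for suitable $g$. A concrete test: $G=\ent/p\ent\times\ent/p\ent$, $H=G_K$, and a single $K_1$ with $H_1=G_{K_1}\neq H$. Then $\alpha_G$ is bijective, so $\coker\,\alpha_G=0$, yet $H^2(G,\hat T_{L/k})\simeq\ent/p\ent$, coming entirely from the Schur multiplier $H^3(G,\ent)$; since $C(L)/D=0$ here, the proposition forces this class to have nontrivial restriction to $H$ or $H_1$ (all other cyclic subgroups meet $H$ and $H_1$ trivially and have vanishing $H^2(\cdot,\hat T)$). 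Nothing in your argument detects this, and without such an analysis you cannot conclude that $C(L)/D\to\sha^2_{\rm cycl}(G,\hat T_{L/k})$ is onto.

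Two further points in the same direction. Even inside $\img\,\Phi_G$, a class of $\sha^2_{\rm cycl}$ is represented by an arbitrary tuple $(\psi_0,(\psi_i))$ of characters of $H$ and of the $H_i$—the $\psi_i$ need not factor through $H_i/(H_i\cap H)$ and $\psi_0$ need not vanish—while $\img\,\alpha_G$ consists only of restrictions of characters of $G$; the reduction of such a tuple, modulo $\img\,\alpha_G$ and using the cyclic vanishing conditions, to your special shape is a real argument that you have not supplied. Finally, the step you yourself flag as the ``combinatorial heart''—that for each $g$ the existence of a single character of $\langle g\rangle$ vanishing on every $\langle g\rangle\cap aHa^{-1}$ and agreeing with all Mackey components of the $c(i)\chi_i$ is equivalent to the pairwise congruences $\pi_{n,m}(c)(i)=\pi_{n,m}(c)(j)$ whenever $\Gamma_i^m\cap\Gamma_j^m\neq\varnothing$—is precisely the content the paper obtains indirectly from \cite{BLP} and \cite{Lee}; asserting it as a matching ``to be verified'' leaves the argument incomplete at its central step. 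What is actually established in your note is the well-defined injection of $C(I,\ent/p^n\ent)/D$ into $H^2(G,\hat T_{L/k})$; the identification of its image intersected with $\sha^2_{\rm cycl}$, and the fact that nothing else lies in $\sha^2_{\rm cycl}$, remain unproved.
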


 As in the case where $K/k$ is of degree $p$, this follows from Theorem \ref{general}, and has the immediate
 corollary

 \begin{coro} Assume that $K/k$ is cyclic of degree $p^n$, and that $K$ is linearly
disjoint of all the fields $K_i$. Then we have

$${\rm Br}(T^c)/{\rm Br}(k) \simeq C(L)/D.$$

 \end{coro}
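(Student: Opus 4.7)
The plan is to compute $\sha^2_{\mathrm{cycl}}(G,\hat T_{L/k})$ via the canonical presentation of $M = \hat T_{L/k}$ as a $G$-lattice, followed by a double-coset analysis that enforces cyclic-vanishing. Write $H = G_K$ and $H_i = G_{K_i}$, and start from the exact sequence
$$0 \to \mathbb{Z} \to \mathbb{Z}[G/H] \oplus \bigoplus_{i \in I} \mathbb{Z}[G/H_i] \to M \to 0$$
already used in the proof of Proposition \ref{reduction to prime power degree - cyclic sha}. Shapiro's lemma gives $H^j(G, \mathbb{Z}[G/H_i]) \simeq H^j(H_i,\mathbb{Z})$, and $H^1(H_i,\mathbb{Z}) = 0$; the long exact sequence therefore contains
$$\coker\bigl[H^2(G,\mathbb{Z}) \xrightarrow{\alpha} \mathrm{Hom}(H,\mathbb{Q}/\mathbb{Z}) \oplus \bigoplus_{i \in I} \mathrm{Hom}(H_i,\mathbb{Q}/\mathbb{Z})\bigr] \hookrightarrow H^2(G,M),$$
so classes in $H^2(G,M)$ are represented by tuples $(\psi,(\chi_i)_{i \in I})$ modulo restrictions of global characters of $G$. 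A possible further contribution from $\ker[H^3(G,\mathbb{Z}) \to H^3(G,P)]$ would need to be checked not to enter $\sha^2_{\mathrm{cycl}}$.

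The linear disjointness hypothesis $K \cap K_i = k$ translates to $HH_i = G$, so the composition $H_i \hookrightarrow G \twoheadrightarrow G/H$ is surjective with kernel $H \cap H_i$, giving $H_i/(H\cap H_i) \simeq G/H = \mathrm{Gal}(K/k) \simeq \mathbb{Z}/p^n\mathbb{Z}$. Fixing once and for all a generator of $G/H$, every character $\chi_i \in \mathrm{Hom}(H_i, \mathbb{Q}/\mathbb{Z})$ factoring through this quotient is recorded by a value $c(i) \in \mathbb{Z}/p^n\mathbb{Z}$, assembling into a map $c : I \to \mathbb{Z}/p^n\mathbb{Z}$. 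I would argue that every class of $\sha^2_{\mathrm{cycl}}(G,M)$ admits a representative with $\psi = 0$ and each $\chi_i$ of this shape, since any excess component either lies in $\mathrm{im}(\alpha)$ or immediately violates cyclicity on some $\langle g \rangle \subset H_i$.

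To impose cyclic-vanishing, I would fix $g \in G$ and apply the Mackey decomposition
$$\mathbb{Z}[G/H_i]\big|_{\langle g \rangle} \;=\; \bigoplus_{\sigma \in \langle g\rangle \backslash G/H_i} \mathbb{Z}\bigl[\langle g\rangle /(\langle g \rangle \cap \sigma H_i\sigma^{-1})\bigr]$$
to read off the restriction of $\chi_i$ to $\langle g \rangle$ componentwise. By Proposition \ref{v et gamma} and Notation \ref{prime power notation}, the conjugacy class of $g$ lies in $\Gamma_i^m$ precisely when some double-coset representative $\sigma$ makes $\langle g \rangle \cap \sigma H_i \sigma^{-1}$ surject under the encoding above onto a cyclic subgroup of order exactly $p^m$ in $H_i/(H\cap H_i) \simeq \mathbb{Z}/p^n\mathbb{Z}$. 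The requirement that the $\chi_i$- and $\chi_j$-contributions cancel on every such $\langle g \rangle$ then forces $c(i) \equiv c(j) \pmod{p^m}$ whenever $\Gamma_i^m \cap \Gamma_j^m \neq \varnothing$, i.e.\ $\pi_{n,m}(c) \in C^m$; imposing this for each $1 \le m \le n$ cuts out exactly $C(L)$. The restrictions to $H_i$ of global characters of $G$ that factor through $G/H$ yield precisely the constant maps in $C(L)$, which is $D$.

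The main obstacle is the bookkeeping in the previous paragraph: simultaneously matching the double-coset intersections $\langle g\rangle \cap \sigma H_i\sigma^{-1}$ to the combinatorial sets $\Gamma_i^m$ across every level $m$, and verifying that no spurious conditions on $c$ arise beyond those built into the definition of $C(L)$. The key Galois-theoretic input is Proposition \ref{v et gamma} together with the tower $K(1) \subset \cdots \subset K(n) = K$; the cohomological content is then essentially routine, and the remaining work is a careful indexing check level by level.
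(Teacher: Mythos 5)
Your route is genuinely different from the paper's (which gets this corollary from Theorem \ref{Colliot Br cycl} plus Theorem \ref{general}, the latter proved by realizing $G$ as an unramified Galois group via Fr\"ohlich and quoting the arithmetic computations of \cite{BLP} and \cite{Lee} together with the Chebotarev comparison of $V_i^m$ and $\Gamma_i^m$), but as written it has a genuine gap at its first step. Writing $P=\ent[G/H]\oplus\bigoplus_{i\in I}\ent[G/H_i]$, the classes of $H^2(G,M)$ that do not lift to $H^2(G,P)$ have coboundary in $\ker[H^3(G,\ent)\to H^3(G,P)]$, and since $H^3(\langle g\rangle,\ent)\simeq H^2(\langle g\rangle,\rat/\ent)=0$ for every cyclic group, restriction to cyclic subgroups imposes no condition at all on this coboundary. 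So such classes do not ``immediately violate cyclicity''; to decide whether they lie in $\sha^2_{\rm cycl}$ you must compute restrictions in $H^2(\langle g\rangle,M)$ itself, where $M$ is no longer a permutation module, and your sketch never does this. The issue is not hypothetical: for $L=K\times K_1$ with $K,K_1$ linearly disjoint degree-$p$ subfields of a $(\ent/p\ent)^2$-extension one checks that your $\alpha$ is surjective, so $\coker(\alpha)=0$, while $\ker[H^3(G,\ent)\to H^3(G,P)]\simeq\ent/p\ent$; hence all of $H^2(G,M)\simeq\ent/p\ent$ consists of classes of exactly this kind, and the corollary (here $C(L)/D=0$) asserts precisely that none of them is cyclically trivial --- a statement your argument cannot reach.

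The second gap is the identification of the cyclic conditions with $C(L)$. For a class lifted to $(\psi,(\chi_i))\in H^2(G,P)$, triviality of its image on $\langle g\rangle$ means the existence of a single character $\theta_g$ of $\langle g\rangle$ whose restrictions to all Mackey components $\langle g\rangle\cap\sigma H\sigma^{-1}$ and $\langle g\rangle\cap\sigma H_i\sigma^{-1}$ reproduce every component simultaneously (the $K$-component $\psi$ included), not merely that the $\chi_i$- and $\chi_j$-contributions ``cancel''. The direction you sketch (necessity of $c(i)\equiv c(j)\ \mathrm{mod}\ p^m$ when $\Gamma_i^m\cap\Gamma_j^m\neq\varnothing$) is the easy half; the converse --- that every $c\in C(L)$ is cyclically trivial, i.e.\ that no further relations appear --- is where the substance lies, and in the paper it is exactly what is imported from \cite{BLP} Theorem 5.3 and \cite{Lee} Theorem 2.5 through the $\delta$-descent in the proof of Theorem \ref{general global}; calling it routine bookkeeping leaves the content of the corollary unproved. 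Two smaller points: Proposition \ref{v et gamma} concerns places and Frobenius elements of a global field, so it is not the group-theoretic dictionary between double cosets and the sets $\Gamma_i^m$ of Notation \ref{prime power notation} that you need (that dictionary is elementary but must be established, including the passage to conjugates and to the chosen factor $E_i$); and the reduction to representatives with $\psi=0$ and each $\chi_i$ factoring through $H_i/(H\cap H_i)$ with values of order dividing $p^n$ is asserted rather than proved, and it depends on the same unaddressed points.
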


 \bigskip
{\bf The general case}

\medskip
Recall that $K/k$ is cyclic of degree $p^n$, and that
we use Notation \ref {prime power notation}.
Recall that  $\mathcal E = \mathcal E(n)$.

\begin{notation}\label {general group}
For all $e \in \mathcal E$, set  $I_e = \{ i \in I \ | \ e_i(n) = e \}$. Denote by $\hat e$ the maximum element in $\cE$.
Note that the index $i$ belongs to  $I_e$ if and only if $K\cap K_i$ is an extension of degree $p^{n-e}$ of $k$.
As $K$ is a cyclic extension, this means that given  $0 \leq m \leq n$, the $e_i(m)$ are the same for all $i\in I_e$ and we denote it by $e(m)$.

For all integers $m$ with $1 \le m \le n$ set
$$C^m = \{c \in \underset {e \in \mathcal E} \oplus \  C(I_e,{\bf Z}/p^{e-e(n-m)} {\bf Z})\ | c(i) = c(j) \ {\rm if} \
\Gamma_i^m \cap \Gamma_j^m \not = \varnothing \}.$$

We still denote by $\pi_{ n,m}$  the map from $\underset {e \in \mathcal E} \oplus \  C(I_e,{\bf Z}/p^{e} {\bf Z})$ to $\underset {e \in \mathcal E} \oplus \  C(I_e,{\bf Z}/p^{e-e(n-m)} {\bf Z})$ induced by the natural projection.

\medskip
Set $$C(L) = \{ c \in C^n \ | \ \pi_{n,m}(c) \in C^m \ {\rm for} \ {\rm all} \ m \le n \},$$

\noindent
and denote by $D$ the image of constant maps $I \to {\bf Z}/p^{n} {\bf Z}$ in $C^n$ under the natural projection .
\end{notation}

\begin{remark}\label{e-e(n-m)}
If $E_i(n-m+1)\supsetneq E_i(n-m)$, then $K(n-m)\supseteq K\cap K_i$.
In this case $e_i(n)\geq m$ and $e_i(n)-e_i(n-m)=m$.
\end{remark}

The main results of this section are

\begin{theo}\label{general}  Assume that $K/k$ is cyclic of degree $p^n$.
Then we have

$$\sha^2_{\rm cycl}(G,\hat T_{L/k}) \simeq C(L)/D.$$

 \end{theo}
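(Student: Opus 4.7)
The plan is to read off $\sha^2_{\rm cycl}(G, \hat T_{L/k})$ from the cohomology of the defining sequence of the norm-one torus. Write $H = G_K$ and $H_i = G_{K_i}$; the standard exact sequence of $G$-lattices
$$0 \to {\bf Z} \to {\bf Z}[G/H] \oplus \bigoplus_{i \in I} {\bf Z}[G/H_i] \to \hat T_{L/k} \to 0$$
yields, via Shapiro, an isomorphism $H^2(G, P) \cong H^2(H, {\bf Z}) \oplus \bigoplus_{i \in I} H^2(H_i, {\bf Z})$, where $P$ denotes the middle term. Classes in $H^2(G, \hat T_{L/k})$ that lift to $H^2(G, P)$ are then parametrised by tuples $(\alpha, (\alpha_i))$ modulo the diagonal image of $H^2(G, {\bf Z})$. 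Since $K/k$ is cyclic of degree $p^n$, one fixes a generator $\chi \in \mathrm{Hom}(G/H, {\bf Z}/p^n{\bf Z})$ and, after subtracting a coboundary, normalises every class to a tuple $(0, (c_i))$ in which each $c_i$ factors through the cyclic quotient $H_i/(H_i \cap H)$ of order $p^{e_i(n)}$. The resulting parameter space is $\bigoplus_{e \in \mathcal{E}} C(I_e, {\bf Z}/p^e{\bf Z})$, and the diagonal image of $H^2(G, {\bf Z})$ in it becomes precisely the subgroup $D$ of constant maps.

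For each cyclic subgroup $\langle g \rangle \subset G$, restrict the exact sequence. Since $H^3(\langle g \rangle, {\bf Z}) = 0$, the map $H^2(\langle g \rangle, P) \to H^2(\langle g \rangle, \hat T_{L/k})$ is surjective, so the vanishing of the restriction of $(0, (c_i))$ reduces to a purely combinatorial condition in the $H^2$ of cyclic groups. Using Mackey to decompose ${\bf Z}[G/H_i]_{|\langle g \rangle} = \bigoplus_x {\bf Z}[\langle g \rangle/(\langle g \rangle \cap x H_i x^{-1})]$ (with $x$ running over $\langle g \rangle \backslash G / H_i$) and applying Shapiro once more, one sees that the restriction of $(0, (c_i))$ lies in the diagonal image of $H^2(\langle g \rangle, {\bf Z})$ if and only if the restrictions of the conjugated characters $c_i$ to the cyclic groups $\langle g \rangle \cap x H_i x^{-1}$ all extend to a single common character of $\langle g \rangle$.

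The principal obstacle is to match this restriction condition with the defining condition of $C^m$. Concretely, one shows that for each $m$, the constraint coming from cyclic subgroups $\langle g \rangle$ with $\bar g \in \Gamma_i^m \cap \Gamma_j^m$ translates into exactly $c(i) \equiv c(j) \pmod{p^{\min(e_i(n), m)}}$, matching the modulus $p^{e - e(n - m)}$ of the statement via Remark \ref{e-e(n-m)}. Proposition \ref{inert bis}, read group-theoretically, supplies the dictionary: $\bar g \in \Gamma_i^m$ precisely when some conjugate of $\langle g \rangle$ inside $H_i$ has image in $H_i/(H_i \cap H)$ of order at least $p^m$, which is exactly when the restriction of $c_i$ to $\langle g \rangle \cap x H_i x^{-1}$ detects the order-$p^m$ part of $c_i$. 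Running $m$ from $1$ to $n$ and assembling these conditions recovers $C(L) \subset C^n$; quotienting by $D$ yields the stated isomorphism. A small additional step is to verify that every element of $\sha^2_{\rm cycl}$ actually lifts to $H^2(G, P)$, i.e., that the connecting map into $H^3(G, {\bf Z})$ vanishes on $\sha^2_{\rm cycl}$; this follows from a diagram chase using $H^3(\langle g \rangle, {\bf Z}) = 0$ together with the presence of a cyclic factor in $L$, which provides a corestriction-style splitting for the troublesome $H^3$ term.
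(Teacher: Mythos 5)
Your route is genuinely different from the paper's: the paper never computes $H^2(G,\hat T_{L/k})$ directly. Instead it invokes Fr\"ohlich to realize $G$ as $\Gal(\ell'/\ell)$ for an \emph{unramified} extension of number fields, uses $\sha^2_{\rm cycl}(G,\hat T_{E/\ell})\simeq\sha^2_{\omega}(\ell,\hat T_{E/\ell})$, and then quotes the arithmetic computations of \cite{BLP} and \cite{Lee} (Theorem \ref{general global}) together with the Chebotarev dictionary $C^m=C^m_{\omega}$ (Corollary \ref{C and C omega}, via Corollary \ref{intersection of gamma} and Remark \ref{preimage of gamma}). A direct group-cohomological proof along your lines is conceivable, but as written your sketch asserts rather than proves exactly the steps that make the theorem hard.

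Concretely: (a) the normalization to $(0,(c_i))$ requires the $H$-component $\alpha\in\Hom(G_K,\rat/\ent)$ of your lifted tuple to be the restriction of a character of $G$; the cyclic-subgroup conditions only give that $\alpha$ is conjugation-invariant and extends on each cyclic subgroup meeting $G_K$, which does not yield an extension to $G$. Moreover your statement that the diagonal image of $H^2(G,\ent)$ ``becomes precisely $D$'' is false before this normalization, since $\Hom(G,\rat/\ent)$ is in general much larger than the group generated by $\chi$; only the characters trivial on $G_K$ produce constant maps. (b) The final ``small additional step'' is not small: the obstruction $\delta(\xi)\in H^3(G,\ent)$ to lifting $\xi\in\sha^2_{\rm cycl}$ automatically dies in $H^3(G_K,\ent)\oplus\bigoplus_i H^3(G_{K_i},\ent)$ by exactness, the vanishing of $H^3(\langle g\rangle,\ent)$ gives no information, the kernel of restriction to $G_K$ is controlled by $H^1(G/G_K,\Hom(G_K,\rat/\ent))$ and need not vanish, and a corestriction argument from the cyclic factor only yields multiplication by $[G:G_K]=p^n$, which kills nothing $p$-primary; so the claimed ``corestriction-style splitting'' does not exist in general. (c) The identification of the cyclic-restriction conditions with the nested system $\pi_{n,m}(c)\in C^m$ for all $m\le n$ (with the moduli $p^{e-e(n-m)}$) is exactly the combinatorial heart of $C(L)$, and you dispose of it with ``one shows''; in the paper this matching is obtained from the arithmetic description of $\sha^2$ and $\sha^2_\omega$ plus Chebotarev, not from a cocycle analysis. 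Until (a)--(c) are supplied, the proposal is a plausible programme rather than a proof.
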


 By Theorem \ref{Colliot Br cycl}, this implies the following

 \begin{coro} Assume that $K/k$ is cyclic of degree $p^n$. then we have

$${\rm Br}(T^c)/{\rm Br}(k) \simeq C(L)/D.$$

\end{coro}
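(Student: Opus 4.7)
The strategy is to apply $H^*(G,-)$ to the standard short exact sequence of $G$-lattices
\begin{equation*}
0 \to \ent \to \ent[G/G_K] \oplus \bigoplus_{i \in I} \ent[G/G_{K_i}] \to \hat T_{L/k} \to 0
\end{equation*}
and to use Shapiro's lemma $H^j(G,\ent[G/G_F]) \simeq H^j(G_F,\ent)$ together with $H^1(G_F,\ent)=0$ and $H^2(G_F,\ent)=\Hom(G_F,\rat/\ent)$. The key piece
\begin{equation*}
\Hom(G,\rat/\ent) \to \Hom(G_K,\rat/\ent) \oplus \bigoplus_{i} \Hom(G_{K_i},\rat/\ent) \to H^2(G,\hat T_{L/k}) \to H^3(G,\ent)
\end{equation*}
of the resulting long exact sequence, together with the analogous sequences for each cyclic subgroup $\langle g \rangle \subset G$ and the restriction maps between them, is what will let me pin down $\sha^2_{\rm cycl}$.

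\textbf{The map $\Phi: C(L)/D \to \sha^2_{\rm cycl}(G, \hat T_{L/k})$.} Fix the character $\chi : G \twoheadrightarrow \Gal(K/k) \simeq \ent/p^n\ent \hookrightarrow \rat/\ent$ corresponding to the cyclic structure of $K/k$. For each $i \in I$, the restriction $\chi|_{G_{K_i}}$ has image of exact order $p^{e_i(n)}$, so for $c(i) \in \ent/p^{e_i(n)}\ent$ the scaled character $\chi_i := c(i)\,\chi|_{G_{K_i}}$ is well-defined. For $c \in C(L)$, let $\Phi(c) \in H^2(G, \hat T_{L/k})$ be the image of $(0, (\chi_i)_i)$ under the connecting map above. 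A constant $c \equiv c_0$ is the image of the character $c_0\chi \in \Hom(G,\rat/\ent)$, so $\Phi$ descends to $C(L)/D$.

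\textbf{Matching cyclic-vanishing with the $C^m$ conditions.} Fix a cyclic subgroup $C = \langle g \rangle$. The long exact sequence for $C$ shows that $\Phi(c)|_C$ vanishes iff there is a single character $\psi_C : C \to \rat/\ent$ whose restriction to each intersection $C \cap \sigma G_{K_i}\sigma^{-1}$ agrees with the corresponding conjugate of $\chi_i$. If the conjugacy class of $g$ lies in $\Gamma_i^m \cap \Gamma_j^m$, then after conjugation some powers $g^a$ and $g^b$ sit in $G_{E_i(n-m)} \setminus G_{E_i(n-m+1)}$ and $G_{E_j(n-m)} \setminus G_{E_j(n-m+1)}$ respectively; on such powers $\chi$ takes values of exact order $p^m$, and writing down the compatibility with $\psi_C$ forces $c(i) \equiv c(j) \pmod{p^{e_i(n)-e_i(n-m)}}$. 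By Remark \ref{e-e(n-m)} this is exactly the defining condition of $\pi_{n,m}(c) \in C^m$. Conversely, once these congruences hold for every $m \le n$, one can patch the $\chi_i|_C$ layer by layer into a consistent $\psi_C$. This proves that $\Phi$ takes values in $\sha^2_{\rm cycl}$ and is injective modulo $D$ (if $\Phi(c)=0$ the family $(\chi_i)_i$ extends to a global character of $G$, which has to be a multiple of $\chi$, and reading off that multiple shows $c \in D$).

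\textbf{Surjectivity and main obstacle.} Given $\beta \in \sha^2_{\rm cycl}$ one must lift it to a family of characters; the obstruction lies in $H^3(G,\ent)$. Since $H^3(\langle g \rangle, \ent)=0$ for every cyclic $\langle g \rangle$, the restriction diagram forces the obstruction itself into $\sha^3_{\rm cycl}(G,\ent)$, and in our $p$-power setting a dévissage along the filtration $K \supset K(n-1) \supset \cdots \supset k$ kills it, so $\beta$ does come from a pair of characters. After lifting, the $K$-component is cancelled by subtracting the image of a character of $G$, and cyclic-triviality of $\beta$ forces each $\chi_i$ to be proportional to $\chi|_{G_{K_i}}$, producing $c \in C(L)$ with $\Phi(c) = \beta$. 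The main obstacle is precisely this surjectivity step, combined with the bookkeeping of the moduli $p^{e_i(n)-e_i(n-m)}$ when $K \cap K_i \ne k$: the decomposition $I = \bigsqcup_{e \in \cE} I_e$ and the identity of Remark \ref{e-e(n-m)} are what make the general case collapse to the same combinatorics as in the linearly-disjoint one.
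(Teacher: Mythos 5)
Your proposal is really an attempted direct cohomological proof of Theorem \ref{general} (note that it never mentions ${\rm Br}(T^c)$ at all; to get the corollary as stated you must still finish by invoking Theorem \ref{Colliot Br cycl}). The first half is sound in outline: the short exact sequence $0\to\ent\to\ent[G/G_K]\oplus\bigoplus_i\ent[G/G_{K_i}]\to\hat T_{L/k}\to 0$, Shapiro's lemma, the map $\Phi(c)=$ image of $(0,(c(i)\chi|_{G_{K_i}})_i)$, injectivity modulo $D$ (a character of $G$ killing $G_K$ is a multiple of $\chi$ since $G_K$ is normal with cyclic quotient), and the translation, via the double-coset decomposition of $\ent[G/G_{K_i}]$ over a cyclic subgroup $\langle g\rangle$ and the fact that for characters of a cyclic group pairwise compatibility on intersections suffices for simultaneous extension, of the cyclic-vanishing condition into the congruences $c(i)\equiv c(j)$ defining the $C^m$. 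That part, though sketchy, can be made rigorous and would give $\Phi(C(L)/D)\hookrightarrow\sha^2_{\rm cycl}(G,\hat T_{L/k})$.

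The genuine gap is surjectivity, and it is not a detail but the crux. First, the obstruction to lifting $\beta\in\sha^2_{\rm cycl}(G,\hat T_{L/k})$ to $H^2$ of the permutation module lies in $H^3(G,\ent)$, and the vanishing of $H^3(\langle g\rangle,\ent)$ for cyclic subgroups only places it in $\sha^3_{\rm cycl}(G,\ent)\simeq\sha^2_{\rm cycl}(G,\rat/\ent)$, which is in general nonzero: already for $G\simeq\ent/p^n\ent\times\ent/p^n\ent$ (the paper's headline bicyclic case) one has $H^2(G,\rat/\ent)\simeq\ent/p^n\ent$ while every restriction to a cyclic subgroup vanishes. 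So your ``restriction diagram'' argument gives nothing, and ``a d\'evissage along $K\supset K(n-1)\supset\cdots$ kills it'' is an assertion, not a proof. Second, even granted a lift $(\chi_0,(\chi_i)_i)$, the claim that cyclic-triviality of $\beta$ forces each $\chi_i$ to be proportional to $\chi|_{G_{K_i}}$ after subtracting the restriction of a character of $G$ is unjustified: cyclic triviality only constrains restrictions to cyclic subgroups, and when $G_{K_i}$ is not cyclic a character of $G_{K_i}$ may agree with a multiple of $\chi$ on every cyclic subgroup without being one; moreover $\Hom(G,\rat/\ent)\to\Hom(G_K,\rat/\ent)$ need not be surjective, so the $K$-component cannot simply be ``cancelled''. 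This is precisely where the paper takes a different route: it realizes $G$ as $\Gal(\ell'/\ell)$ for an unramified extension of number fields (Fr\"ohlich), identifies $\sha^2_{\rm cycl}(G,\hat T)$ with $\sha^2_\omega(\ell,\hat T_{E/\ell})$, and imports the arithmetic computations of \cite{BLP} and \cite{Lee} (Theorem \ref{general global}, Corollary \ref{C and C omega}); in effect the surjectivity you need is outsourced to those results. Until you either prove surjectivity of $\Phi$ directly or independently bound the order of $\sha^2_{\rm cycl}(G,\hat T_{L/k})$, the proposal does not establish the corollary.
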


The proof of Theorem \ref{general} will be given below, using some arithmetic results of \cite{BLP}.
We start by recalling and developing some results concerning global fields.

\bigskip
{\bf Global fields}

\medskip
Assume that $k$ is a global field. Recall that $K/k$ is cyclic of degree $p^n$, and that we use notations
\ref{prime power notation} as well as \ref {general group}. In addition, for global fields, we also use notation
\ref{global}.

\bigskip
For all integers $m$ with $1 \le m \le n$ set

$$C_{\rm arith}^m = \{c \in \underset {e \in \mathcal E} \oplus \  C(I_e,{\bf Z}/p^{e-e(n-m)} {\bf Z})\ | c(i) = c(j) \ {\rm if} \
V_i^m \cap V_j^m \not = \varnothing \}$$ and
$$C_{\omega}^m = \{c \in \underset {e \in \mathcal E} \oplus \  C(I_e,{\bf Z}/p^{e-e(n-m)} {\bf Z})\ | c(i) = c(j) \ {\rm if} \
V_i^m \cap V_j^m {\rm \  is \ infinite}  \}$$


\medskip
Set $$C_{\rm arith}(L) = \{ c \in C_{\rm arith}^n \ | \ \pi_{n,m}(c) \in C_{\rm arith}^m \ {\rm for} \ {\rm all} \ m \le n \}$$
and

$$C_{\omega}(L) = \{ c \in C_{\omega}^n \ | \ \pi_{n,m}(c) \in C_{\omega}^m \ {\rm for} \ {\rm all} \ m \le n \}.$$

\begin{theo}\label{general global}
Assume that $K/k$ is cyclic of degree $p^n$. Then we have

\smallskip   {\rm (1)}  $\sha^2(k,\hat T_{L/k}) \simeq C_{\rm arith}(L)/D.$

\smallskip
 {\rm (2)}  $\sha_{\omega}^2(k,\hat T_{L/k}) \simeq C_{\omega}(L)/D.$

 \end{theo}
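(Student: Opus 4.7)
The plan is to deduce both isomorphisms from Theorem \ref{general} together with the global-to-cyclic dictionary furnished by Proposition \ref{v et gamma}, Corollary \ref{intersection of gamma}, and Remark \ref{preimage of gamma}. Specifically, I would first fix the Galois extension $\cK/k$ containing $K$ and all the $K_i$, set $G = \Gal(\cK/k)$, and compare, at each level $m$, the condition ``$V_i^m \cap V_j^m$ is infinite'' with the condition ``$\Gamma_i^m \cap \Gamma_j^m \neq \varnothing$''. The implication that a nonempty $\Gamma_i^m \cap \Gamma_j^m$ yields infinitely many places in $V_i^m \cap V_j^m$ is Chebotarev, via Remark \ref{preimage of gamma}; conversely, any infinite $V_i^m \cap V_j^m$ must contain an unramified place whose Frobenius lies in $\Gamma_i^m \cap \Gamma_j^m$ by Corollary \ref{intersection of gamma}. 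Hence the defining conditions of $C_\omega^m$ and $C^m$ coincide, so $C_\omega(L) = C(L)$ as subgroups of $\bigoplus_{e \in \mathcal E} C(I_e, \mathbf{Z}/p^{e-e(n-m)}\mathbf{Z})$.

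For part (2), this identification combined with Theorem \ref{general} reduces the claim to the isomorphism $\sha^2_{\omega}(k, \hat T_{L/k}) \simeq \sha^2_{\mathrm{cycl}}(G, \hat T_{L/k})$. I would obtain the latter by the same mechanism as in the proof of Proposition \ref{reduction to prime power degree - cyclic sha}: pick an unramified Galois extension $\ell'/\ell$ of global fields with group $G$ by \cite{F}, build an auxiliary \'etale algebra $E$ over $\ell$ whose character lattice is the same $G$-module as $\hat T_{L/k}$, and invoke \cite{BP}, Corollary 3.4 to equate $\sha^2_\omega(\ell, \hat T_{E/\ell})$ with $\sha^2_{\mathrm{cycl}}(G, \hat T_{E/\ell})$. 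The resulting combinatorial group depends only on the $G$-module structure, so it coincides with $C_\omega(L)/D$.

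For part (1), the strategy is to realise $\sha^2(k, \hat T_{L/k})$ as a subgroup of $\sha^2_\omega(k, \hat T_{L/k}) \simeq C_\omega(L)/D$ and to characterise its image by the strictly stronger arithmetic condition. At each place $v \in V_k$, using the short exact sequence
$$0 \to \mathbf{Z} \to \mathbf{Z}[G/H] \oplus \bigoplus_{i \in I} \mathbf{Z}[G/H_i] \to \hat T_{L/k} \to 0$$
appearing in the proof of Proposition \ref{reduction to prime power degree - cyclic sha}, one computes $H^2(k_v, \hat T_{L/k})$ via Shapiro's lemma in terms of local characters of the cyclic factors of $K \otimes_k (K_i)_w$ for $w \mid v$. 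Under the explicit isomorphism of Theorem \ref{general}, the vanishing of $[c]$ in $H^2(k_v, \hat T_{L/k})$ translates into the equality $c(i) = c(j)$ for every pair $(i,j)$ such that $v$ admits a place in both $V_i^m$ and $V_j^m$, for every $m$. Intersecting these conditions over all $v \in V_k$ yields exactly the defining condition of $C_{\mathrm{arith}}(L)$, and dividing by $D$ gives the isomorphism in (1).

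The main obstacle will be the careful local analysis at the ``extra'' places in $V_i^m \cap V_j^m$ that do not contribute a Frobenius class to $\Gamma_i^m \cap \Gamma_j^m$ — namely the ramified places and the places lying in finite intersections. These are precisely what makes $C_{\mathrm{arith}}$ strictly smaller than $C_\omega$, and handling them requires a local refinement of the cohomological computation underlying Theorem \ref{general}, tracking the exact values $e_i(m)$ after completing at $v$ and verifying that the local invariants of $[c]$ are governed by the same combinatorial datum. This bookkeeping, rather than any new cohomological input, is where the bulk of the work will lie.
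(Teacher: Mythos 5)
Your plan has a genuine circularity problem within this paper's logical structure. Both halves of your argument lean on Theorem \ref{general} (for part (2) you reduce to $\sha^2_{\rm cycl}(G,\hat T_{L/k})\simeq C(L)/D$, and for part (1) you invoke ``the explicit isomorphism of Theorem \ref{general}'' to translate local vanishing into the combinatorial condition). But in the paper Theorem \ref{general} is not available at this point: its proof comes \emph{after} Theorem \ref{general global} and uses Corollary \ref{C and C omega}, which is itself deduced from part (2) of Theorem \ref{general global}. So unless you supply an independent, purely group-cohomological proof that $\sha^2_{\rm cycl}(G,\hat T_{L/k})\simeq C(L)/D$ (which you do not sketch), your argument assumes the statement it is meant to prove. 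The pieces of your plan that are legitimately independent --- the Chebotarev comparison $C^m=C^m_\omega$ via Corollary \ref{intersection of gamma} and Remark \ref{preimage of gamma}, and the identification $\sha^2_\omega(k,M)\simeq\sha^2_{\rm cycl}(G,M)$ from \cite{BP}, Corollary 3.4 --- are exactly the ingredients of Corollary \ref{C and C omega} and of the later proof of Theorem \ref{general}; they do not by themselves yield either isomorphism of Theorem \ref{general global}.

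The paper's actual proof takes a different and self-contained route: it imports the arithmetic descriptions $\sha^2(k,\hat T_{L/k})\simeq G(K,K')/D$ from \cite{BLP} (Theorem 5.3 and Lemma 3.1) and $\sha^2_\omega(k,\hat T_{L/k})\simeq G_\omega(K,K')/D$ from \cite{Lee} (Theorem 2.5), and then proves the purely combinatorial identifications $G(K,K')\simeq C_{\rm arith}(L)$ and $G_\omega(K,K')\simeq C_\omega(L)$; the nontrivial step is the converse direction, where one shows that $c\in C_{\rm arith}(L)\setminus D$ forces $\underset{r}{\cap}\,\underset{i\notin I_r(a)}{\cup}V_i^{\delta(r,a_i)+1}=\varnothing$ via the strictly increasing sequence $\delta(r_l,r_{l+1})$. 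Your proposal contains no substitute for these inputs: the ``local computation via Shapiro's lemma'' that you defer to bookkeeping is precisely the content of \cite{BLP}'s analysis (valid at ramified places as well, which is why $C_{\rm arith}$ is strictly smaller than $C_\omega$), and the translation between the per-place existence of a compatible residue $r$ and the pairwise conditions defining $C_{\rm arith}$ is the combinatorial heart of the paper's argument. As written, your part (1) is a restatement of the problem plus a plan to redo \cite{BLP}, and your part (2) rests on a theorem whose proof depends on the result in question.
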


 \noindent
 {\bf Proof.}
 Recall some notation of \cite{BLP}.

 For $a=(a_i)\in\underset {e \in \mathcal E} {\oplus} \underset{i\in I_e}{\oplus}({\bf Z}/p^{e} {\bf Z})$ and $r\in\ent/p^{\hat{e}}\ent$, set $$I_r(a)=\{i\in I \ | \ a_i=r \ ({\rm mod}\  p^{e_i(n)}\ent)\}.$$

  Set
 \[G(K,K')=\{a=(a_i)\in\underset {e \in \mathcal E} \oplus \underset{i\in I_e}{\oplus}({\bf Z}/p^{e} {\bf Z}) \ | \ \underset {r \in {\bf Z}/p^{\hat{e}}{\bf Z}} \cap \ \underset { i \notin I_r(a)} \cup V_i^{\delta(r,a_i)+1} =\varnothing\} ;\]
  \[G_\omega(K,K')=\{a=(a_i)\in\underset {e \in \mathcal E} \oplus \underset{i\in I_e}{\oplus}({\bf Z}/p^{e} {\bf Z}) \ | \ \underset {r \in {\bf Z}/p^{\hat{e}}{\bf Z}} \cap \ \underset { i \notin I_r(a)} \cup V_i^{\delta(r,a_i)+1}\  \mbox{ is finite} \} .\]

 With the notation of \cite{BLP}, we have $\sha^2(k,\hat T_{L/k}) \simeq \sha(K,K’) = G(K,K')/D$,
 where $D$ is the subgroup generated by $(1,1,...,1)$ (see
\cite {BLP}, Theorem 5.3 and Lemma 3.1). Similarly, it is shown in \cite {Lee}, Theorem 2.5 that
$\sha_{\omega}^2(k,\hat T_{L/k}) \simeq G_{\omega}(K,K')/D$. Hence it suffices to show that $G (K,K')\simeq C_{\rm arith}(L)$
and that $G_{\omega}(K,K')\simeq C_{\omega}(L)$.

\medskip We show that $G(K,K')\simeq C_{\rm arith}(L)$; the proof of  $G_{\omega} (K,K')\simeq C_{\omega}(L)$ is the same.


Let $$f: \underset {e \in \mathcal E} {\oplus}\underset{i\in I_e}{\oplus}({\bf Z}/p^{e} {\bf Z})\to \underset {e \in \mathcal E} \oplus \  C(I_e,{\bf Z}/p^{e} {\bf Z})$$ be the map sending $(a_i)\in\underset{i\in I_e}{\oplus} ({\ent}/p^{e} \ent)$ to $c : I _e \to {\bf Z}/p^{e}{\bf Z}$ such that $c(i) = a_i$. We claim that the isomorphism $f$ gives rise to an isomorphism $$G (K,K')\to C_{\rm arith}(L).$$

For $c\in  \underset {e \in \mathcal E} \oplus \  C(I_e,{\bf Z}/p^{e} {\bf Z})$,
we denote $\pi_{n,m}(c)$ by $c_m$.

Let $a=(a_i)\in G(K,K')$ and $c=f(a)$.
We show that $c_m\in C^m$ for $1\leq m \leq n$.

Suppose that $V^m_i \cap V^m_j\neq \varnothing$.
By Remark \ref{e-e(n-m)}, we have $e_i-e_i(n-m)=e_j-e_j(n-m)=m$.
Let $v \in V^m_i\cap V^m_j$. As $a\in G(K,K')$, there is $r\in\ent/p^{\hat e}\ent$ such that
$v\notin \underset{l\notin I_r(a)}{\cup}V_l^{\delta(r,a_l)+1}$.
If $i\notin I_r(a)$, then $\delta(r,a_i)+1>m$ since $v\notin V_i^{\delta(r,a_i)+1} $. Hence $c(i)=r$ (mod $p^m\bf Z$).
If  $i\in I_r(a)$, then $c(i)=r$ (mod $p^{e_i(n)}\bf Z$).
In both cases we have $c_m(i)=r$ (mod $p^m\bf Z$).
The same argument works for $j$. Therefore $c_m(i)=c_m(j)$ and $c_m\in C^m$.

Let $c\in C^n$ such that $c_m\in C^m$ for  $1\le m\le n$.
Let $a=f^{-1}(c)$.
If $c\in D$, then clearly $a\in D$.
Suppose that $c\notin D$.
We claim that
$\underset {r \in {\bf Z}/p^{\hat{e}}{\bf Z}} \cap \ \underset { i \notin I_r(a)} \cup V_i^{\delta(r,a_i)+1} =\varnothing.$

Suppose not. Let $v\in \underset {r \in {\ent}/p^{\hat{e}}{\bf Z}} {\cap} \ \underset { i \notin I_r(a)} \cup V_i^{\delta(r,a_i)+1}$.
Choose $r_0\in \ent/p^{\hat{e}}\ent$. Since $c\notin D$, there is $r_1\in \ent/p^{\hat{e}}\ent$ and
$i\in I_{r_1}(a)\setminus I_{r_0}(a)$ such that
$v\in V_i^{\delta(r_0,a_i)+1}$.
For the same reason, there is $r_2\in \ent/p^{\hat{e}}\ent$ and
$j\in I_{r_2}(a)\setminus I_{r_1}(a)$ such that $v\in V_j^{\delta(r_1,a_j)+1}$.

By the choice of $i$ and $j$, we have $\delta(r_0,a_i)=\delta(r_0,r_1)$ and $\delta(r_1,a_j)=\delta(r_1,r_2)$.
Suppose that $\delta(r_0,r_1)\geq \delta(r_1,r_2)$.
Then $v\in V_i^{m}\cap V_j^{m}$, where $m=\delta(r_1,r_2)+1$.
Hence $c_m(i)=c_m(j)$ and $\delta(a_i,a_j)\geq m=\delta(r_1,r_2)+1$,
which contradicts that $\delta(r_1,r_2)\geq\delta(a_i,a_j)$.
Therefore $\delta(r_0,r_1)<\delta(r_1,r_2)$.

We can continue the above process to get an infinite sequence of $r_l\in\ent/p^{\hat e}\ent$ such that
$\delta(r_l,r_{l+1})<\delta(r_{l+1},r_{l+2})$. It is a contradiction as $\delta(r_l,r_{l+1})$ ranges from $0$ to $\hat{e}$. Hence $\underset {r \in {\bf Z}/p^{\hat{e}}{\bf Z}} \cap \ \underset { i \notin I_r(a)} \cup V_i^{\delta(r,a_i)+1} =\varnothing$ and $a\in G(K,K')$.  As a consequence $f$ induces an isomorphism $G(K,K')\to C_{arith}(L)$.

\begin{coro}\label{C and C omega}
Let $k$ be a global field. Then $\sha_\omega^2(k,\hat{T}_{L/k})\simeq C(L)/D$.
\end{coro}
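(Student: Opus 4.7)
The plan is to deduce the corollary directly from Theorem \ref{general global}(2), which already gives $\sha_\omega^2(k,\hat T_{L/k})\simeq C_\omega(L)/D$. It therefore suffices to prove the purely combinatorial identity
\[
C_\omega(L) = C(L).
\]
Since both $C(L)$ and $C_\omega(L)$ sit inside the common ambient group $\bigoplus_{e\in\mathcal E} C(I_e,\mathbf Z/p^e\mathbf Z)$ and are cut out by the successive constraints ``$\pi_{n,m}(c)\in C^m$'' (resp. $C_\omega^m$) for $1\le m\le n$, this will follow as soon as I establish $C^m = C_\omega^m$ for each such $m$.

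The key step is the equivalence, for all $i,j\in I$ and all $1\le m\le n$,
\[
\Gamma_i^m\cap\Gamma_j^m\neq\varnothing\iff V_i^m\cap V_j^m\text{ is infinite.}
\]
The implication $(\Leftarrow)$ is immediate from Corollary \ref{intersection of gamma}: as the ramified set $V_{rm}$ is finite, an infinite intersection $V_i^m\cap V_j^m$ necessarily contains a place outside $V_{rm}$, and the Frobenius class of any such place lies in $\Gamma_i^m\cap\Gamma_j^m$. For the implication $(\Rightarrow)$, I pick a conjugacy class $[g]\in\Gamma_i^m\cap\Gamma_j^m$. Chebotarev's density theorem provides infinitely many unramified places $v$ of $k$ whose Frobenius class is $[g]$. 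By Proposition \ref{inert bis} applied (for $\ell=i,j$) to $F=K_\ell$, $E=E_\ell(n-m+1)$, $M=\cK$, and the valuation $v_k$, membership of an unramified place in $V_\ell^m$ is determined solely by its Frobenius conjugacy class; hence every such $v$ lies simultaneously in $V_i^m$ and $V_j^m$, so $V_i^m\cap V_j^m$ is infinite.

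Combining the two directions gives $C^m = C_\omega^m$ for every $m$, whence $C(L)=C_\omega(L)$ and the corollary follows from Theorem \ref{general global}(2). The main obstacle, conceptually, is the $(\Rightarrow)$ implication: one must use that for an unramified place $v$ the condition defining $V_\ell^m$ depends only on the Frobenius class, so that a single class in $\Gamma_i^m\cap\Gamma_j^m$ yields, via Chebotarev, an entire infinite family of places in $V_i^m\cap V_j^m$. This uniformity is precisely what is supplied by Proposition \ref{inert bis} and was already the core of the proof of Proposition \ref{v et gamma}, so no new ingredient is needed beyond what has been developed in \S\ref{norm}.
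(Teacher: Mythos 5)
Your argument is correct and is essentially the paper's own proof: the paper likewise reduces the statement to the identity $C^m=C^m_\omega$ for all $m$, obtains it from Corollary \ref{intersection of gamma} together with the Chebotarev argument of Remark \ref{preimage of gamma} (which is exactly the converse direction of the proof of Proposition \ref{v et gamma}), and then concludes by Theorem \ref{general global}(2). One small imprecision: Proposition \ref{inert bis} should be invoked with $F=E_\ell(n-m)$ and $E=E_\ell(n-m+1)$ (so that $E/F$ is of prime degree), followed by the observation that inertness there propagates to $E_\ell(n)/E_\ell(n-m)$ because this extension is cyclic of $p$-power degree and unramified at the place in question --- not with $F=K_\ell$; this is exactly how the proof of Proposition \ref{v et gamma}, which you cite, proceeds, so the gap is only in the bookkeeping, not in the substance.
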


\begin{proof}
By Corollary \ref{intersection of gamma} and
Remark \ref{preimage of gamma}, the two sets
$C^m$ and $C^m_\omega$ are the same.
Our claim then follows from Theorem \ref{general global}.
\end{proof}

\bigskip
\noindent
{\bf Proof of Theorem \ref{general}.} Recall that $\cK$ is a Galois extension of $k$  containing $K$ and all the fields $K_i$, and that
$G = {\rm Gal}(\cK/k)$; if $F$ is a subfield of $\cK$, we denote by $G_F$ the subgroup of $G$ such
that $F = \cK^{G_F}$.

Note that $k$ is not necessarily a global field here.
However there is always an unramified extension $\ell'/\ell$ with Galois group $\Gal(\ell'/\ell)\simeq G$ (\cite{F}).
Hence we can regard $\hat{T}_{L/K}$ as a $\Gal(\ell'/\ell)$-module.

To be precise, set $F = (\ell')^{G_K}$, $L_i = (\ell')^{G_{K_i}}$ and
$E = F \times \underset{i \in I} \prod L_i$. By construction, the $G$-lattices $\hat T_{E/\ell}$ and $\hat T_{L/k}$
are isomorphic.

Since the extension $\ell'/\ell$ is unramified, we have
$\sha^2(\ell,\hat T_{E/\ell})\simeq \sha_\omega^2(\ell,\hat T_{E/\ell}) \simeq \sha^2_{\rm cycl}(G,\hat T_{E/\ell})$.
By Corollary \ref{C and C omega}
the group $\sha^2_{\rm cycl}(G,\hat T_{E/\ell})$ is isomorphic to $ C(E)/D$.
However  $C(L)$ only depends on the group $G$.
Hence $C(L)\simeq C(E)$.
Therefore $\sha^2_{\rm cycl}(G,\hat T_{L/k})\simeq\sha^2_{\rm cycl}(G,\hat T_{E/\ell})\simeq C(L)/D$.

\section{Unramified Brauer groups and generators}\label{generators}

We keep the notation of the previous sections. Recall that $p$ is a prime number, $K/k$ a cyclic field extension
of degree $p^n$, and $L = K \times K’$, where $K’$ is an \'etale $k$-algebra of finite rank. In the previous section,
we introduced a group $C(L)$ and proved that ${\rm Br}(X_a^c)/{\rm Im}({\rm Br}(k)) \simeq C(L)/D$.

\medskip
The aim of this section is to give more precise information about the isomorphism $C(L)/D \to {\rm Br}(X_a^c)/{\rm Im}({\rm Br}(k))$.

\medskip Let ${\rm Br_{ur}}(k(X_a))$ be the subgroup of ${\rm Br}(k(X_a))$  consisting of all elements which are unramified at all discrete valuations of $k(X_a)$ with residue fields containing $k$ and with fields of fraction $k(X_a)$; recall that ${\rm Br_{ur}}(k(X_a))$ is isomorphic to ${\rm Br}(X_a^c)$ (see Cesnavius \cite {C}, Theorem 1.2).

\medskip As in the previous sections, let us write
$K’ = \underset{i \in I} \prod K_i$, where the $K_i$ are
finite separable field extensions of $k$.

\begin{notation}\label{galois groups and R}
We denote by $\cG_k$ the absolute Galois group of $k$, $\cG_{k(X_a)}$ the absolute Galois group of
$k(X_a)$.
Let $R$ be a discrete valuation ring of $k(X_a)$ with residue field $\kappa_R$ containing $k$ and with field of fractions $k(X_a)$.
We denote by $\cG_R$ the absolute Galois group of $\kappa_R$.
\end{notation}

\begin{notation}\label{definition of norm and Brauer group}
For all $i \in I$, let $\{\beta_{ij}\}$ be a basis of $K_i$ over k.
Let $y_i=\underset{j}{\sum} \beta_{ij}x_{ij}$, where $x_{ij}$ are variables.
Set $N_i = N_{K_i\otimes k(X_a)/k(X_a)}(y_i)$ considered as an element of $k(X_a)^{\times}$.
We define $N=N_{K\otimes k(X_a)/k(X_a)}(y)$ in a similar way.
Fix an isomorphism $\chi:{\rm Gal}(K/k)\to \ent/p^n\ent$.
Then $\chi$ gives rise to  a morphism $\tilde{\chi}: \cG_{k(X_a)}\to \ent/p^n\ent$ and
a morphism $\chi_R:\cG_{R}\to\ent/p^n\ent$.
Let $(N_i,\tilde\chi)$ denote the  class of the cyclic algebra over $k(X_a)$
associated to $\tilde\chi$ and the element $N_i \in k(X_a)^{\times}$. (\cite{GS} Prop. 4.7.3)
\end{notation}

\medskip
The main result of this section is

\begin{theo} \label{main theo}
Assume $char (k)\neq p$. Then the map $$ u : C(L) \to {\rm Br}(k(X_a))$$ given by $$u(c) = \underset {i \in I} \sum c(i)(N_i,\tilde\chi) $$ induces
an isomorphism

$$C(L) /D\to {\rm Br}(X_a^c)/{\rm Im}({\rm Br}(k)).$$
\end{theo}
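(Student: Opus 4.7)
By combining Theorems \ref{general}, \ref{Colliot Br cycl} and \ref{x = t}, we already know abstractly that $C(L)/D \simeq \Br(X_a^c)/\img(\Br(k))$; what Theorem \ref{main theo} adds is that the explicit cyclic-algebra formula realizes this isomorphism. The plan therefore has four steps: (i) $u$ is well-defined on $C(L)$; (ii) $u$ takes values in $\Br_{\mathrm{ur}}(k(X_a)) \simeq \Br(X_a^c)$; (iii) $u(D) \subseteq \img(\Br(k))$; (iv) the induced map $\bar u : C(L)/D \to \Br(X_a^c)/\img(\Br(k))$ is bijective.

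For (i), one must verify that $(N_i, \tilde\chi)$ has order dividing $p^{e_i(n)}$, so that multiplication by $c(i) \in \ent/p^{e_i(n)}\ent$ is unambiguous. This follows from the projection formula
\[
(N_i, \tilde\chi) = \cor_{K_i \otimes k(X_a) / k(X_a)}\bigl(y_i, \tilde\chi|_{K_i \otimes k(X_a)}\bigr),
\]
since $\tilde\chi|_{K_i \otimes k(X_a)}$ has order $p^{e_i(n)}$ (because $K \cap K_i$ has degree $p^{n - e_i(n)}$ over $k$). For (iii), take $c \equiv r$ constant: then $u(c) = r\bigl(\prod_i N_i, \tilde\chi\bigr)$; the equation $N \cdot \prod_i N_i = a$ on $X_a$, combined with the vanishing of $(N, \tilde\chi)$ (since $N$ is a norm from the cyclic extension $K/k$ corresponding to $\tilde\chi$), gives $u(c) = r(a, \tilde\chi) \in \img(\Br(k))$.

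The substantive part is (ii). Let $R$ be a discrete valuation of $k(X_a)$ with residue field $\kappa_R \supseteq k$. Because $\tilde\chi$ factors through $\cG_k$ and the inertia at $R$ acts trivially on $\kappa_R$, the character $\tilde\chi$ is unramified at $R$, so the standard residue formula for cyclic algebras yields
\[
\partial_R\bigl(u(c)\bigr) = \Bigl(\sum_{i \in I} c(i)\, v_R(N_i)\Bigr) \chi_R \in H^1(\kappa_R, \ent/p^n\ent).
\]
I would evaluate $v_R(N_i)$ by decomposing $K_i \otimes_k k(X_a) = \prod_s M_s$ into fields and using $v_R(N_i) = \sum_s \sum_{w \mid R} f_{w/R}\, v_w(y_i)$; Proposition \ref{inert bis} then identifies the decomposition groups of extensions of $R$ to $\cK(X_a)$ with conjugacy classes in $G$ lying in intersections $\Gamma_i^m \cap \Gamma_j^m$. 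The defining relations of $C(L)$, namely $c(i) \equiv c(j) \pmod{p^m}$ whenever $\Gamma_i^m \cap \Gamma_j^m \neq \varnothing$, translate exactly into divisibility of $\sum_i c(i) v_R(N_i)$ by the order of $\chi_R$, so $\partial_R(u(c)) = 0$.

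Step (iv) compares two abelian groups already known to be isomorphic, so it suffices to prove injectivity of $\bar u$. For a non-zero class $[c] \in C(L)/D$, I would exhibit indices $i,j$ and an integer $m$ with $c(i) \not\equiv c(j) \pmod{p^m}$ and $\Gamma_i^m \cap \Gamma_j^m = \varnothing$, then construct a discrete valuation $R$ of $k(X_a)$ whose associated Galois data distinguishes $\Gamma_i^m$ from $\Gamma_j^m$, and verify that $\partial_R(u(c)) \neq 0$. The main obstacle is producing such witnessing valuations on $k(X_a)$ over an arbitrary base field $k$ and matching their decomposition-group invariants with the group-theoretic data $\Gamma_i^m$; this is exactly the point where Propositions \ref{v et gamma} and \ref{inert bis}, together with the reduction from $\Br(X_a^c)$ to $\Br(T_{L/k}^c)$ via Theorem \ref{x = t}, become indispensable.
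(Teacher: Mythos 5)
Your steps (i) and (iii) are fine and coincide with the paper (Remark \ref{chi and gamma} and Lemma \ref{diagonal vanish}), but the two substantive steps have real gaps. In step (ii) the residue formula $\partial_R(u(c))=\sum_{i}c(i)\,\nu_R(N_i)\chi_R$ is correct, yet the assertion that the congruences defining $C(L)$ ``translate exactly'' into the vanishing of this sum is precisely what must be proved, and Proposition \ref{inert bis} does not give it: $R$ is an arbitrary divisorial valuation of $k(X_a)$, possibly ramified in the relevant extensions, and one needs the bridge statement that if $\partial_R(N_i,\tilde\chi)$ and $\partial_R(N_j,\tilde\chi)$ both have order at least $p^m$ then $\Gamma_i^m\cap\Gamma_j^m\neq\varnothing$ (the paper's Lemma \ref{order and gamma}). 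Proving that requires passing to the completion $k(X_a)_R$, comparing $\nu_R(N_i)$ with the inertia degrees of the factors of $K_i\otimes_k k(X_a)_R$, and an orbit--stabilizer argument producing an element of $\langle g_R\rangle\cap\cG_R$ fixing the relevant residue field whose image in $\Gal(K/k)$ has order at least $p^m$; the paper then organizes the cancellation by induction on $|J(c)|$ using the auxiliary elements $c'$ of Lemma \ref{definition of c'} and the relation $N\cdot\prod_i N_i=a$. Without this lemma your step (ii) is an assertion, not a proof (a direct cancellation argument is possible, but only once Lemma \ref{order and gamma} is available).

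Step (iv) is worse: the plan is self-defeating. If $c\in C(L)$, then by your own step (ii) the class $u(c)$ is unramified at \emph{every} discrete valuation of $k(X_a)$ with residue field containing $k$, and so is every element of $\img(\Br(k))$; hence no such valuation can satisfy $\partial_R(u(c))\neq 0$, and residues on $k(X_a)$ can never detect that $u(c)\notin\img(\Br(k))$. The paper proves injectivity by leaving $k(X_a)$: one projects $X_a$ to $\aff^d$ with function field $M=k(x_{ij})$, uses that the kernel of $\Br(M)\to\Br(k(X_a))$ is generated by $(a\prod_i N_i^{-1},\chi)$ (\cite{BP}, Lemma 12.3), writes $u(c)-\alpha=r(a\prod_i N_i^{-1},\chi)$ over $M$, and takes residues at the divisors $N_i=0$ of $\aff^d$, where the classes are allowed to ramify; the delicate point is showing that $\chi$ has order exactly $p^{e_i(n)}$ over the residue field $\kappa_{N_i}$, which forces $c(i)\equiv -r \pmod{p^{e_i(n)}}$ for all $i$, i.e.\ $c\in D$. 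Your counting reduction (injectivity plus equality of cardinalities via Theorems \ref{general}, \ref{Colliot Br cycl} and \ref{x = t}) is exactly the paper's, but the injectivity itself is not established by your proposal.
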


\medskip

\begin{remark}\label{chi and gamma}
Note that   $(N_i,\tilde{\chi})\in \Br(k(X_a))$ has order at most $p^{e_i(n)}$, so  $c(i)(N_i,\tilde\chi) $ is well-defined for $c(i)\in\ent/p^{e_i(n)}\ent$  (ref.  \cite{BLP} Lemma 6.1).
\end{remark}

We start with following lemmas.
\begin{lemma}\label{diagonal vanish}
The group $u(D)$ is contained in the image of $\Br(k)$ in $\Br(k(X^c))$.
\end{lemma}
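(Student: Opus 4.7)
The plan is to take a constant class $c \in D$, say with value $r \in \ent/p^n\ent$ (reduced modulo $p^{e_i(n)}$ on each $I_e$), and show directly that $u(c)$ is the image of an element of $\Br(k)$. The first observation I would record is compatibility with Remark \ref{chi and gamma}: since $(N_i,\tilde\chi)$ has order dividing $p^{e_i(n)}$, we may replace $c(i) = r \bmod p^{e_i(n)}$ by the single integer $r$ and write
\[
u(c) \;=\; r\sum_{i\in I}(N_i,\tilde\chi) \;=\; r\Bigl(\prod_{i\in I} N_i,\;\tilde\chi\Bigr)
\]
using the standard bilinearity $(a,\tilde\chi)+(b,\tilde\chi)=(ab,\tilde\chi)$ of cyclic algebra classes (cf.\ \cite{GS}, Prop.\ 4.7.3).

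Next I would exploit the equation defining $X_a$. Because $L = K\times K'$ with $K' = \prod_{i\in I} K_i$, the norm factorizes as $N_{L/k}(t) = N\cdot \prod_{i\in I} N_i$, so on $k(X_a)$ one has the relation $\prod_{i\in I} N_i = a\cdot N^{-1}$. Substituting gives
\[
u(c) \;=\; r\bigl(aN^{-1},\,\tilde\chi\bigr) \;=\; r\,(a,\tilde\chi) \;-\; r\,(N,\tilde\chi).
\]

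The key step is then to show that the term $(N,\tilde\chi)$ vanishes in $\Br(k(X_a))$. This is the classical fact that a cyclic algebra $(b,\tilde\chi)$ becomes trivial as soon as $b$ is a norm from the cyclic extension determined by $\tilde\chi$. Here $N = N_{K\otimes k(X_a)/k(X_a)}(y)$ is by construction the norm of an element of $K\otimes k(X_a)$, and $\tilde\chi$ is precisely the character of $\Gal(K/k)$ pulled back to $\cG_{k(X_a)}$; hence $(N,\tilde\chi)=0$. Once this is in hand, we conclude $u(c) = r(a,\tilde\chi)$, and since $a\in k^\times$ and $\tilde\chi$ is inflated from the character $\chi$ of $\Gal(K/k)$, the class $r(a,\chi) \in \Br(k)$ maps to $r(a,\tilde\chi)$ under $\Br(k)\to \Br(k(X_a))$.

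Finally, to transfer this conclusion from $\Br(k(X_a))$ to $\Br(X_a^c)$ I would invoke the identification $\Br(X_a^c) \simeq \Br_{\mathrm{ur}}(k(X_a))$ of \cite{C}, Theorem 1.2, together with the compatibility of the map $\Br(k)\to \Br(X_a^c)\hookrightarrow \Br(k(X_a))$. The only potential subtlety, and the one I would be most careful about, is the bookkeeping showing that the reductions of $r$ modulo the various $p^{e_i(n)}$ really do collapse to a single well-defined cyclic class $r\,(\prod_i N_i,\tilde\chi)$; once that is checked the remaining computation is a one-line application of the norm equation and the triviality of $(N,\tilde\chi)$.
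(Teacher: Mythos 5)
Your argument is correct and is essentially the paper's own proof: both rest on the relation $N\cdot\prod_{i\in I}N_i=a$ in $k(X_a)^{\times}$, bilinearity of cyclic classes, and the vanishing of $(N,\tilde\chi)$ because $N$ is a norm from $K\otimes_k k(X_a)$, so that $u(c)=r(a,\tilde\chi)$ comes from $r(a,\chi)\in\Br(k)$. The extra bookkeeping you flag (reducing $r$ modulo the $p^{e_i(n)}$) is exactly handled by Remark \ref{chi and gamma}, so there is no gap.
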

\begin{proof}
Since $N\cdot\underset{i\in I}\prod N_i=c$,
we have  $\underset{i\in I}{\sum}(N_i,\tilde{\chi})=(c/N,\tilde{\chi})=(c,\tilde{\chi})$,
which is the image of $(c,\chi)$ in $\Br(k(X_a))$.
Hence $u(D)\subseteq \img(\Br(k))$.

\end{proof}

\medskip

The following lemma can be found  in \cite{Lee} \S3.
Here we use the notation $C(L)$ to simplify the proof.

\begin{lemma}\label{definition of c'}

\noindent
\begin{enumerate}
\item  Let $c\in C(L)\setminus D$.
Pick $i_0\in I_{\hat{e}}$.
Let $\hat{c}\in D$ be the image of the constant map from $I$ to $c(i_0)$.
Set $m$ to be the maximal integer such that
$\pi_{n,m}(c)=\pi_{n,m}(\hat{c})$.
Choose $ r\in \ent/p^{\hat{e}}\ent$ such that $\delta(r,c(i_0))=m$.
Consider the element $c'\in \underset{e\in \cE}{\oplus}C(I_e,\ent/p^{e}\ent)$ defined as follows:
\begin{equation}
c'(i)=\left\{
\begin{array}{ll}
\pi_{\hat{e},e_i(n)}(r), & \hbox{if $e_i(n)>m$ and $m=\delta(c(i),c(i_0))$};\\
\pi_{\hat{e},e_i(n)}(c(i_0)), & \hbox{otherwise.}
\end{array}
\right .
\end{equation}
Then $c'\in C(L)\setminus D$.
\item Suppose that $k$ is a global field and $c\in C_{arith}(L)\setminus D$.
Then the $c'$ defined above is in $C_{arith}(L)\setminus D$.
\end{enumerate}
\end{lemma}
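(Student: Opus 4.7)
The claim splits into (a) $c' \notin D$ and (b) $c' \in C(L)$, with part (2) of the lemma following by the same reasoning. For (a), the maximality of $m$ means $\pi_{n,m+1}(c) \neq \pi_{n,m+1}(\hat{c})$, so some $i^* \in I$ satisfies $c(i^*) \not\equiv c(i_0) \pmod{p^{m+1}}$. Combined with $\pi_{n,m}(c) = \pi_{n,m}(\hat{c})$, this forces $e_{i^*}(n) > m$ and $\delta(c(i^*), c(i_0)) = m$, placing $i^*$ in the first branch, so $c'(i^*) = \pi_{\hat{e}, e_{i^*}(n)}(r)$. Since $\delta(r, c(i_0)) = m < e_{i^*}(n)$, we get $c'(i^*) \not\equiv c(i_0) \pmod{p^{e_{i^*}(n)}}$, while $c'(i_0) = c(i_0)$; hence $c' \notin D$.

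For (b), the plan is to verify $\pi_{n,m'}(c') \in C^{m'}$ for each $1 \leq m' \leq n$ in two regimes. When $m' \leq m$, the condition $\delta(r, c(i_0)) = m \geq m'$ forces both branches of the definition to yield $c'(i) \equiv c(i_0) \pmod{p^{m'}}$, so $\pi_{n,m'}(c')$ is constant and lies in $C^{m'}$ trivially. When $m' > m$, take $i, j$ with $\Gamma_i^{m'} \cap \Gamma_j^{m'} \neq \varnothing$: by Remark \ref{e-e(n-m)} this forces $e_i(n), e_j(n) \geq m'$, hence both exceed $m$. From $c \in C(L)$ at level $m'$ one has $c(i) \equiv c(j) \pmod{p^{m'}}$, and combined with $c(i), c(j) \equiv c(i_0) \pmod{p^{m}}$, this forces $\delta(c(i), c(i_0))$ and $\delta(c(j), c(i_0))$ to be either both equal to $m$ or both strictly greater than $m$. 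Hence $i$ and $j$ fall in the same branch, and $c'(i), c'(j)$ become projections of a common element ($r$ or $c(i_0)$), agreeing modulo $p^{\min(e_i(n), e_j(n))} \geq p^{m'}$.

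Part (2) follows by repeating the argument verbatim with $V_i^{m'}$ in place of $\Gamma_i^{m'}$ and $C_{\rm arith}(L)$ in place of $C(L)$, the crucial inputs being again $e_i(n) \geq m'$ from nonemptiness of $V_i^{m'}$ and the congruence $c(i) \equiv c(j) \pmod{p^{m'}}$ from $c \in C_{\rm arith}(L)$. The main obstacle is the regime $m' > m$ in (b): one must combine Remark \ref{e-e(n-m)} with the hypothesis $c \in C(L)$ at the fine level $m'$ to force indices linked by $\Gamma^{m'}$ into a single branch of the piecewise definition of $c'$. Once this branch identification is established, the rest is routine bookkeeping with the projection maps $\pi_{\hat{e}, e}$ and the valuation-like function $\delta$.
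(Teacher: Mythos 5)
Your proposal is correct and follows essentially the same route as the paper's proof: extract from the maximality of $m$ an index with $e_i(n)>m$ and $\delta(c(i),c(i_0))=m$ to see $c'\notin D$, then check $\pi_{n,l}(c')\in C^l$ by treating $l\le m$ (where $r\equiv c(i_0)\bmod p^m$ makes $c'$ effectively constant) and $l>m$ (where Remark \ref{e-e(n-m)} plus $c\in C(L)$ force indices with $\Gamma_i^l\cap\Gamma_j^l\neq\varnothing$ into the same branch), with part (2) obtained by the same argument using $V_i^m$ in place of $\Gamma_i^m$. Your write-up just spells out slightly more of the bookkeeping than the paper does.
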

\begin{proof}
As $c$ is not in $D$, by the choice of $m$ there is some $i\in I$ such that $e_i(n)>m$ and
$\delta(c(i),c(i_0))=m$.
Hence $c'(i)\neq c'(i_0) \ ({\rm mod}\ p^{e_i(n)}\ent)$ by our construction and $c'\notin D$.

Now we show that $\pi_{n,l}(c')\in C^l(L)$ for $0\leq l\leq n$.
If $l\leq m$, then by the choice of $r$ we have $\pi_{n,l}(c(i_0))=\pi_{n,l}(r)$.
Clearly  $\pi_{n,l}(c')\in C^l(L)$.

Suppose $l> m.$
If $\Gamma_i^l\cap \Gamma_j^l\neq \varnothing,$  then by Remark \ref{e-e(n-m)}
$e_i(n)$ and $e_j(n)$ are at least $l$ and $c(i)=c(j)\ ({\rm mod }\ p^{l}\ent)$.
Hence $\delta(c(i_0),c(i))=m$ if and only if  $\delta(c(i_0),c(j))=m$.
By construction $c'(i)=c'(j)\ ({\rm mod }\ p^{l}\ent)$ and $\pi_{n,l}(c')\in C^l(L)$.

The proof of statement (2) is similar.

\end{proof}

\begin{lemma}\label{order and gamma}
Assume that $char(k)\neq p$.
Let $R$ be a discrete valuation ring  as in Notation \ref{galois groups and R}.
Denote by $\partial_R$ the residue map from $\Br(k(X_a))$ to $H^1(\kappa_R,\rat/\ent)$.
Suppose that  the order of $\partial_R(N_i,\tilde\chi)$ and
the order of $\partial_R(N_j,\tilde\chi)$ are both at least $p^{m}$.
Then $\Gamma_i^m\cap\Gamma_j^m$ is nonempty.
\end{lemma}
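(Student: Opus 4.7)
The plan is to use the standard residue formula for cyclic algebras at unramified DVRs to convert the hypothesis into arithmetic data on the decomposition group $D_R\subset G$, and then to construct from this a conjugacy class in $\Gamma_i^m\cap\Gamma_j^m$.

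Because $\tilde\chi$ factors as $\mathcal G_{k(X_a)}\to\mathcal G_k\to\mathrm{Gal}(K/k)$ and $K/k$ is separable (as $\mathrm{char}(k)\neq p$), it is unramified at every DVR $R$ of $k(X_a)$ whose residue field contains $k$. The standard residue formula then gives
$$\partial_R(N_i,\tilde\chi)=v_R(N_i)\cdot\chi_R,\qquad \partial_R(N_j,\tilde\chi)=v_R(N_j)\cdot\chi_R,$$
with $\chi_R=\tilde\chi|_{\mathcal G_{\kappa_R}}$. Setting $p^a=|\chi_R|=[K\cdot\kappa_R:\kappa_R]$, the hypothesis becomes $a\ge m$ together with $\mathrm{ord}_p(v_R(N_i))\le a-m$ and $\mathrm{ord}_p(v_R(N_j))\le a-m$.

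Next, using that $L_i:=K_i\otimes_k k(X_a)$ is a field (linear disjointness, since $X_a$ is geometrically integral), I would expand $v_R(N_i)=\sum_{w\mid R}f_w\, v_w(y_i)$. The DVRs $w$ of $L_i$ above $R$ correspond to $D_R$-orbits on $G/G_{K_i}$, with $f_w$ equal to the orbit size and the constant part of $\kappa_w$ equal to $\mathcal L\cdot g(K_i)$ for any representative $g$ of the orbit, where $\mathcal L=\mathcal K\cap\kappa_R=\mathcal K^{D_R}$. The bound $\mathrm{ord}_p(v_R(N_i))\le a-m$ (together with the fact that one can choose $y_i$ generic enough so that the only nonzero $v_w(y_i)$ equal $1$) forces some orbit on $G/G_{K_i}$ to have size of $p$-part at most $p^{a-m}$, so that its stabiliser $D_R\cap g G_{K_i}g^{-1}$ has image in $\mathrm{Gal}(K/k)$ of order at least $p^m$. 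Equivalently, some conjugate of $G_{K_i}$ intersects $D_R$ in a subgroup whose image in $\mathrm{Gal}(K/k)$ contains an element of order exactly $p^m$; the parallel statement holds for $K_j$.

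Since $\mathrm{Gal}(K/k)$ is cyclic of order $p^n$, its subgroup of order $p^m$ is unique, so the two order-$p^m$ images extracted from the $i$- and $j$-data coincide. Combined with the fact that $D_R$ surjects onto the cyclic quotient $\mathrm{Gal}(K\kappa_R/\kappa_R)$ of order $p^a$, one can amalgamate these data to produce a single conjugacy class $[g]$ in $G$ whose image in $\mathrm{Gal}(K/k)$ has order exactly $p^m$ and which intersects both $G_{K_i}$ and $G_{K_j}$. Any representative $h$ of $[g]$ in $G_{K_i}$ then lies in $G_{K(n-m)}\cap G_{K_i}=G_{E_i(n-m)}$ but not in $G_{K(n-m+1)}$, hence not in $G_{E_i(n-m+1)}$; the same argument with a representative in $G_{K_j}$ handles $j$. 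Therefore $[g]\in\Gamma_i^m\cap\Gamma_j^m$. The main obstacle is this last amalgamation step — extracting a single conjugacy class rather than two possibly unrelated ones from the separate $i$- and $j$-analyses — and it will likely use the precise interplay between the actions of $D_R$ on $G/G_{K_i}$ and on $G/G_{K_j}$, controlled through their common projection to the cyclic group $\mathrm{Gal}(K\kappa_R/\kappa_R)$.
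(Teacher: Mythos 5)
Your reduction runs parallel to the paper's up to the point where the real work begins, and much of it is sound: the residue formula $\partial_R(N_i,\tilde\chi)=\nu_R(N_i)\chi_R$, the translation of the hypothesis into $n_R\ge m$ and $\mathrm{ord}_p(\nu_R(N_i))\le n_R-m$ (your $a=n_R$), the identification of the places of $K_i\otimes_k k(X_a)$ above $R$ with orbits on $G/G_{K_i}$ whose sizes are the residue degrees $f_w$, and the deduction that some stabiliser $D_R\cap gG_{K_i}g^{-1}$ has image of order at least $p^m$ in $\Gal(K/k)$ (this is correct, since $[\,\mathrm{im}\,D_R:\mathrm{im}\,S\,]$ divides $[D_R:S]$ and is a $p$-power). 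But the proof stops exactly where the lemma lives. What you have produced is an element $x$ in a conjugate of $G_{K_i}$ and an element $y$ in a conjugate of $G_{K_j}$, each with image of order $p^m$; you then assert that ``one can amalgamate these data'' into a single conjugacy class and immediately flag this as the main obstacle. Nothing in the preceding argument relates $x$ and $y$: uniqueness of the order-$p^m$ subgroup of $\Gal(K/k)$ only matches their images, not the elements, and $x,y$ need not be powers of conjugates of any common element of $G$. Your amalgamation target is also misdirected: you ask for a class whose image in $\Gal(K/k)$ has order exactly $p^m$ and which meets both $G_{K_i}$ and $G_{K_j}$, whereas $\Gamma_i^m$ only requires that the cyclic group generated by a \emph{conjugate} meet $G_{E_i(n-m)}\setminus G_{E_i(n-m+1)}$, and it is this weaker statement that is actually provable.

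The missing idea, which is how the paper's proof works, is to fix one element before looking at $i$ or $j$: choose $g_R\in\cG_R$ with $\chi_R(g_R)$ a generator of the image of $\chi_R$, and let the cyclic group $\langle g_R\rangle$ (not all of $D_R$) act on the coset space $\cG_R/\cH_i$ attached to a factor $M_i$ of $K_i\otimes_k k(X_a)_R$ whose contribution to $\nu_R(N_i)$ has $p$-adic valuation at most $m_i:=\mathrm{ord}_p(\nu_R(N_i))$. That coset space has cardinality $p^{f_i}q_i'$ with $f_i\le m_i$ and $p\nmid q_i'$, so some $\langle g_R\rangle$-orbit has size with $p$-part at most $p^{f_i}$; its stabiliser gives $\sigma_R=h_i^{-1}g_R^{s}h_i\in\cH_i$ with $\mathrm{ord}_p(s)\le f_i$, and $\chi_R(\sigma_R)=s\,\chi_R(g_R)$ still has order $\ge p^{\,n_R-f_i}\ge p^m$. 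Hence a power of a conjugate of the image $g$ of $g_R$ lies in $G_{K_i}$ with image of exact order $p^m$, and running the identical count for $j$ with the \emph{same} $g_R$ places the single class $[g]$ in $\Gamma_i^m\cap\Gamma_j^m$. (In your language: take $d\in D_R$ whose image generates the image of $D_R$ in $\Gal(K/k)$ and count $\langle d\rangle$-orbits inside your small $D_R$-orbit.) Finally, the parenthetical about choosing $y_i$ ``generic enough'' is not available --- $y_i$ is the fixed coordinate element defining $N_i$ --- but it is also unnecessary: since $\mathrm{ord}_p\bigl(\sum_w f_w\,w(y_i)\bigr)\le n_R-m$, some term is nonzero and not divisible by $p^{\,n_R-m+1}$, and $f_w$ divides that term.
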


 \begin{proof}
 Let $\nu_R$ be the discrete valuation associated to $R$.
 Denote the completion of $k(X_a)$ with respect to $\nu_R$ by $k(X_a)_R$.
 Choose an extension $\omega_R$  of $\nu_R$ to a separable closure of $k(X_a)_R$.
 By the construction of $\tilde\chi$, $\partial_R(N_i,\tilde\chi)=\nu_R(N_i)\chi_R$. (See \cite{GS} 6.8.4 and 6.8.5.)
 Write $\nu_R(N_i)$ as $p^{m_i}q_i$  where $p\nmid q_i$.
 Let $p^{n_R}$ be the order of $\chi_R$.
 As the order of $\partial_R(N_i,\tilde\chi)\geq p^m$, we have $n_R-m_i\geq m$.

Since $\nu_R(N_i)=p^{m_i}q_i$, there is some factor  $M_i$ of $K_i\otimes_k k(X_a)_R$ such that  $p^{m_i+1}$ does not divide $\nu_R(N_{M_i/k(X_a)_R}(y_{M_i}))$, where $y_{M_i}$ is the projection of $y_i$ in $M_i$.
Let $\omega_{i,R}$ be the restriction of $\omega_R$ to $M_i$.
Write the  inert degree of $w_{i,R}$ over $\nu_R$ as  $p^{f_i}q'_i$ where $p\nmid q'_i$.
As $p^{m_i+1}$ does not divide $\nu_R(N_{M_i/k(X_a)_R}(y_{M_i}))$, we have
$f_i\leq m_i$.

Choose a factor $M$ of $K\otimes_k k(X_a)_R$ and let $\ol{M}$ be its residue field.
Let $\ol{M}_i$ be the residue field of $w_{i,R}$.
Both fields are considered as subfields of a separable closure $\kappa_R^s$ of $\kappa_R$.

As $f_i\leq m_i$ and $n_R-m_i\geq m$, the cyclic extension $\ol{M} \ol{M}_i/\ol{M_i}$ is of degree at least $p^{m}$.
Choose $g_R\in\cG_R$ such that $\chi_R(g_R)$ generates the image of $\chi_R$ in $\rat/\ent$.
Let $\cH_i$ be the subgroup of $\cG_R$ which fixes $\ol{M}_i$.
We claim that there are some $h_i\in \cG_R$  and
some $\sigma_R\in\langle h_i^{-1}g_R h_i\rangle\cap \cH_i$
such that $\chi_R(\sigma_R)$ is of order at least $p^m$.

Consider the group action of $\langle g_R \rangle$ on the set of left cosets of $\cH_i$ in $\cG_R$.
As $\lvert\cG_R/\cH_i\lvert= p^{f_i}q'_i$ with $p\nmid q_i'$ ,
there is some $h_i\in\cG_R$ such that $p^{f_i+1}$ does not divide the order of the orbit of  $ h_i\cH_i$.
Hence the stabilizer of $h_i\cH_i$ is $\langle g_R^{p^{f_i'}r_i}\rangle$ for some $f_i'\leq f_i$ and some
$ r_i$ coprime to $p$.
Let $\sigma_R=h_i^{-1}g_R^{p^{f_i'}r_i}h_i$.
Then $\chi_R(\sigma_R)=\chi_R(g_R^{p^{f_i'}r_i})$, which is of order $p^{n_R-f'_i}$.
Since $f'_i\leq f_i\leq m_i$ and $n_R-m_i\geq m$, the order of $\chi_R(\sigma_R)$ is at least $p^m$.

Let  $g$ and $\sigma$ be the image of $g_R$ and $\sigma_R$ in $G$.
Then $\sigma$ fixes $K_i$ and $\sigma$ is an element of order at least $p^m$ in $\Gal(K/k)$.
Hence the conjuacy class of  $g$ belongs to $\Gamma_i^m$.
The same argument proves that the conjuacy class of  $g$ belongs to $\Gamma_j^m$.
Hence $\Gamma^m_i\cap\Gamma^m_j$ is nonempty.

\end{proof}

\medskip

\medskip

Next we  prove that  for all $c \in C(L)$, the element  $\underset {i \in I} \sum c(i)(N_i,\tilde\chi)$ is unramified.

\begin{prop}\label{image is unramified}
Suppose that $char(k)\neq p$. The image of $u$ is an unramified subgroup of $\Br(k(X_a))$.
\end{prop}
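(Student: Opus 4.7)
Fix a discrete valuation ring $R$ as in Notation \ref{galois groups and R}; the goal is to show $\partial_R(u(c)) = 0$ in $H^1(\kappa_R,\rat/\ent)$. By $\ent$-linearity of $\partial_R$ and the standard computation $\partial_R((N_i,\tilde\chi)) = \nu_R(N_i)\chi_R$ (see \cite{GS}, 6.8.4--6.8.5), the question reduces to proving the congruence
\[
\sum_{i \in I} c(i)\,\nu_R(N_i) \;\equiv\; 0 \pmod{p^{n_R}},
\]
where $p^{n_R}$ is the order of $\chi_R$.

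The first ingredient is the following: since $N = N_{K \otimes_k k(X_a)/k(X_a)}(y)$ is a norm from the cyclic extension of $k(X_a)$ cut out by $\tilde\chi$, the cyclic algebra $(N,\tilde\chi)$ is split in $\Br(k(X_a))$; applying $\partial_R$ gives $\nu_R(N)\chi_R = 0$, hence $p^{n_R} \mid \nu_R(N)$. Combined with the defining relation $N \cdot \prod_i N_i = a \in k^{\times} \subseteq R^{\times}$, which yields $\sum_i \nu_R(N_i) = -\nu_R(N)$, we obtain
\[
\sum_{i \in I} \nu_R(N_i) \;\equiv\; 0 \pmod{p^{n_R}}.
\]

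The second ingredient reduces the weighted sum to this unweighted one, using the combinatorial structure of $C(L)$. For each $i$ with $\nu_R(N_i) \ne 0$ write $\nu_R(N_i) = p^{m_i}q_i$ with $p \nmid q_i$; Remark \ref{chi and gamma} gives $n_R - m_i \leq e_i(n)$, so $c(i)\nu_R(N_i) \pmod{p^{n_R}}$ is well defined. Choose $i_0$ with $m_{i_0}$ minimal (if no such $i_0$ exists, then $\nu_R(N_i) = 0$ for all $i$ and we are done) and set $r = c(i_0)$. For every other $i$ with $m_i < n_R$, both residues $\partial_R((N_{i_0},\tilde\chi))$ and $\partial_R((N_i,\tilde\chi))$ have order at least $p^{n_R - m_i}$, so Lemma \ref{order and gamma} yields $\Gamma_{i_0}^{n_R - m_i} \cap \Gamma_i^{n_R - m_i} \ne \varnothing$; the hypothesis $c \in C(L)$ then forces $c(i) \equiv r \pmod{p^{n_R - m_i}}$. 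Hence $(c(i) - r)\,\nu_R(N_i)$ is divisible by $p^{(n_R - m_i) + m_i} = p^{n_R}$; for $i$ with $m_i \geq n_R$ or $\nu_R(N_i) = 0$ the same conclusion is trivial. Summing,
\[
\sum_{i \in I} c(i)\,\nu_R(N_i) \;\equiv\; r \sum_{i \in I} \nu_R(N_i) \;\equiv\; -r\,\nu_R(N) \;\equiv\; 0 \pmod{p^{n_R}},
\]
which is the required vanishing.

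The crux is the synchronisation step: the many congruences produced by Lemma \ref{order and gamma} must be assembled into agreement with a single $r \in \ent$ compatibly across all $i$. The device for this is to select $i_0$ with maximal residue order (smallest $m_{i_0}$), which guarantees that for each other $i$ the pair $(i_0, i)$ meets the hypothesis of Lemma \ref{order and gamma} at the sharp level $M = n_R - m_i$; the defining condition of $C(L)$ then supplies uniformly the congruence $c(i) \equiv c(i_0) \pmod{p^{n_R - m_i}}$ that reduces the weighted sum to the unweighted one handled in the previous paragraph.
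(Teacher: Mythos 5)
Your proof is correct, but it is organized quite differently from the paper's. Both arguments ultimately rest on the same ingredients: the residue formula $\partial_R((N_i,\tilde\chi))=\nu_R(N_i)\chi_R$, Lemma \ref{order and gamma} combined with the conditions $\pi_{n,m}(c)\in C^m$, and the relation $N\cdot\prod_{i}N_i=a$ (which is also what underlies Lemma \ref{diagonal vanish}). The paper, however, argues by induction on the size of the support $J(c)$: it first normalizes $c$ so that $c(i)=0$ for some $i\in I_{\hat e}$, then uses Lemma \ref{definition of c'} to split off an auxiliary element $c'$ whose residue is killed by a contradiction argument at level $m+1$, and finishes by induction applied to $c-c'$. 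You avoid the induction and Lemma \ref{definition of c'} entirely: you anchor at an index $i_0$ where $\nu_R(N_{i_0})$ has minimal $p$-adic valuation $m_{i_0}$, apply Lemma \ref{order and gamma} at the valuation-dependent levels $M=n_R-m_i$ to get $c(i)\equiv c(i_0)\pmod{p^{\,n_R-m_i}}$, and reduce the weighted sum to $c(i_0)\sum_i\nu_R(N_i)\equiv -c(i_0)\nu_R(N)\equiv 0\pmod{p^{n_R}}$. This is a genuine streamlining for this particular Proposition; note, though, that the decomposition via $c'$ is reused almost verbatim in the proof of Theorem \ref{main theo sha}, so the paper's inductive set-up has a later payoff that your route does not provide. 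Two small points you should make explicit: the condition $\pi_{n,M}(c)\in C^M$ yields a congruence modulo $p^M$ only after invoking Remark \ref{e-e(n-m)} (a priori the relevant components of $\pi_{n,M}(c)$ lie in $\ent/p^{\,e_i(n)-e_i(n-M)}\ent$, and it is the nonemptiness of $\Gamma_i^M$, supplied by Lemma \ref{order and gamma}, that identifies this with $\ent/p^{M}\ent$ for both $i$ and $i_0$); and $\nu_R(a)=0$ holds because the valuations considered are trivial on $k$ (their residue fields contain $k$), which is what turns the defining equation into $\sum_i\nu_R(N_i)=-\nu_R(N)$.
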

\begin{proof}
By Lemma \ref{diagonal vanish} we can assume that $c(i)=0$ for some $i\in I_{\hat{e}}$.

Let $R$ be a discrete valuation ring of $k$ with residue field $\kappa_R$ containing $k$
and with field of fractions $k(X_a)$.
Let $\nu_R$ be the discrete valuation associated to $R$.
Denote by $\partial_R$ the residue map from $\Br(k(X_a))$ to $H^1(\kappa,\rat/\ent)$.
We claim that $u(c)=\underset{i\in I}{\sum} c(i)(N_i,\tilde\chi)$ is unramified at $R$.

Let $J(c)=\{i\in I \mid c(i)\neq 0\ {\rm in}\ \ent/p^{e_i(n)}\ent)\}$.
Let $m$ be the maximum integer such that $\pi_{n,m}(c)=0$ and
set $J_m(c)=\{i\in J(c) \mid m=\delta(0,c(i))\}$.
We prove by induction on $\lvert J(c)\rvert$.
If $|J(c)|=0$, then $c=0$ and our claim is trivial.
Suppose that our claim is true for $\lvert J(c)\rvert\leq h$.

Let $|J(c)|=h+1$. Then $c\notin D$ and $J_m(c)$ is nonempty.
Pick $j\in J_m(c)$ and choose $r\in \ent/p^{\hat{e}}\ent$
such that $c(j)=r$ $({\rm mod}\ p^{e_j(n)}\ent)$.
Let $c'$ be defined as in Lemma \ref{definition of c'}.
We first prove the $u(c')$ is unramified at $R$,
i.e. $\partial_R(u(c'))=0.$

Since $c(i)=0$, by the definition of $c'$ we have
$u(c')=\underset{s\in J_m(c)}{\sum}r(N_s,\tilde{\chi})$.
Hence  $\partial_R(u(c'))=\underset{s\in J_m(c)}{\sum}r\cdot\nu_R(N_s)\chi_R$.
Suppose that $\partial_R(u(c'))$ is not zero.
Then there is some $s\in J_m(c)$ such that $r\cdot\nu_R(N_s)\chi_R\neq 0$.
As $\delta(0,r)=m$,  the order of  $\nu_R(N_s)\chi_R$ is at least $p^{m+1}$.

By Lemma \ref{diagonal vanish}, there is some $t\in I\setminus J_m(c)$ such that
$r\cdot\nu_R(N_t)\chi_R(g)\neq 0$ and  the order of  $\nu_R(N_t)\chi_R$ is at least $p^{m+1}$.
By Lemma \ref{order and gamma}  the set $\Gamma_{s}^{m+1}\cap \Gamma_{t}^{m+1}$ is nonempty.
As $\delta(0,r)=m$, we have $c'(s)\neq c'(t)$ $({\rm mod} \ p^{m+1}\ent)$.
This contradicts that $c'\in C(L)$.
Therefore $\partial_R(u(c'))=0$ and $u(c')$ is unramified.

Consider the element $c-c'\in C(L)$.
By our construction of $c'$, the cardinality of $J(c-c')$ decreases by at least one.
By induction hypothesis $u(c-c')$ is unramified. Hence $u(c)$ is unramified.
\end{proof}

\bigskip

\noindent
{\bf Proof of Theorem \ref{main theo}.}
By Lemma \ref{diagonal vanish} and Proposition \ref{image is unramified}, the map
$$u:C(L)/D\to \Br(X^c_a)/\img(\Br(k))$$ is well-defined.

A similar argument as in \cite{BP}  Thm. 12.2 proves the injectivity of $u$.
Consider the projection from $X_a$ to the affine space $\aff^d$,
where $d=\underset{i\in I}{\sum}[K_i;k]$ and the coordinates are given by $x_{ij}$
defined in Notation \ref{definition of norm and Brauer group}.

Let $M$ be the function field of $\aff^d$.
Denote by $\chi_M$ the
image of $\chi$ in $H^1(M,\rat/\ent)$.
Suppose that $u(c)=\alpha\in\img(\Br(k))$.
Then $u(c)-\alpha$ is in the kernel of $\Br(M)\to\Br(k(X_a))$,which is generated by
$(a\underset{i\in I}{\prod} N_i^{-1}, \chi)$ (see \cite{BP} Lemma 12.3).
Therefore $u(c)-\alpha=r(a\underset{i\in I}{\prod} N_i^{-1}, \chi).$

Consider the discrete valuation $v_{N_i}$ on $M$ and let $\kappa_{N_i}$ be its residue field.
Denote by $\chi_{N_i}$ the image of $\chi$ in $H^1(\kappa_{N_i},\rat/\ent)$.
We claim that $\chi_{N_i}$ is of order $p^{e_i(n)}$.
Let $M_i$ be the function field of the subvariety of $\aff^{[K_i:k]}$ defined by $N_i$.
Then $\kappa_{N_i}=M_i(x_{jl})$ where $x_{jl}$ are defined as in Notation \ref{definition of norm and Brauer group} with $j\neq i$.
Hence $\kappa_{N_i}$ is purely transcendental over $M_i$.
Let $\chi_{M_i}$ be the image of $\chi$ in $H^1(M_i,\rat/\ent)$.
It suffices to  prove the order of $\chi_{M_i}$ is $p^{e_i(n)}$.
Note that $K\otimes_k M_i\otimes_k K_i\simeq K\otimes_k (\prod K_i(x))\simeq \prod E_i(n)(x)$,
where $x$ is a variable (see Notation \ref{prime power notation} for $E_i(n)$).
On the other hand $K\otimes_k M_i\otimes_k K_i\simeq (\prod \tilde{M}_i)\otimes_k K_i$,
where $\tilde{M}_i$ is a factor of $K\otimes_k M_i$.
If $[\tilde{M}_i:M_i]<p^{e_i(n)}$, then $\tilde{M}_i\otimes_k K_i=\tilde{M}_i\otimes_{M_i} M_i\otimes_k K_i$ is a product of extensions of $K_i(x)$ of degree less then $p^{e_i(n)}$, which is a contradiction.
Hence $[\tilde{M}_i:M_i]=p^{e_i(n)}$ and $\chi_{M_i}$ is of order $p^{e_i(n)}$.

As $\chi_{N_i}$ is of order $p^{e_i(n)}$, after taking residue of $u(c)-\alpha$ at $v_{N_i}$, we see that $c(i)=-r$ (mod $p^{e_i(n)}\ent$). Hence $c\in D$ and $u$ is injective.

Since $u$ is injective, $\lvert C(L)/D\rvert\leq\lvert \Br(X_a^c)/\img(\Br(k))\rvert$.
By Theorem \ref{general} and Theorem \ref{Colliot Br cycl}, the order of $C(L)/D$ is equal to the order of $\Br (T^c_{L/k})/\Br(k)$.
By Theorem \ref{x = t}, the map $u$ is surjective and hence is an isomorphism.

\medskip

We now prove the results announced in the introduction :

\begin{prop}\label{example of bicyclic}
Let $k$ be a field of $char(k)\neq p$.
Let $F$ be a bicyclic extension of $k$ with Galois group $\ent/p^n\ent\times\ent/p^n\ent$.
Let $K$ and $K_i$ be {linearly disjoint} cyclic subfields of $F$ with degree $p^n$
for $i=1,..., m$.
Then $$\Br(X_a^c)/\img(\Br(k))\simeq(\ent/p^{n}\ent)^{m-1},$$ and is generated by $(N_i,\chi)$ for $i=1...m-1$.
\end{prop}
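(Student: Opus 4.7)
The plan is to invoke Theorem~\ref{main theo}, which identifies $\Br(X_a^c)/\img(\Br(k))$ with $C(L)/D$, and then to show that in the bicyclic setting $C(L)$ is as large as possible, namely equal to $C(I,\ent/p^n\ent)\simeq(\ent/p^n\ent)^m$, so that quotienting by the diagonal yields $(\ent/p^n\ent)^{m-1}$. The heart of the argument is a triviality check: no pair $(i,j)$ with $i\neq j$ should impose any constraint on $C(L)$, which amounts to showing $\Gamma_i^m\cap\Gamma_j^m=\varnothing$ for every $1\le m\le n$.

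Set $\cK=F$ and $G=\Gal(F/k)\simeq(\ent/p^n\ent)^2$. Each of $K,K_1,\dots,K_m$ is cut out by a surjective character, which I write as $\phi_K,\phi_1,\dots,\phi_m\colon G\to\ent/p^n\ent$; pairwise linear disjointness in $F$ says that any two of these characters form a basis of $\Hom(G,\ent/p^n\ent)$ over $\ent/p^n\ent$, equivalently are linearly independent modulo $p$. Because $K\otimes_k K_i\simeq F$ for every $i$, one has $e_i(n)=n$ for all $i\in I$, hence $\cE=\{n\}$, and every coefficient ring $\ent/p^{e-e(n-m)}\ent$ appearing in Notation~\ref{general group} collapses to $\ent/p^m\ent$.

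For the vanishing of $\Gamma_i^m\cap\Gamma_j^m$, write $v_p$ for the $p$-adic valuation on $\ent/p^n\ent$. Unwinding the definitions gives $G_{E_i(n-m)}\setminus G_{E_i(n-m+1)}=\{h\in G:\phi_i(h)=0,\,v_p(\phi_K(h))=n-m\}$, so letting $h$ range over $\langle g\rangle$ yields the criterion that $g\in\Gamma_i^m$ if and only if $v_p(\phi_K(g))\le n-m$ and $v_p(\phi_i(g))\ge v_p(\phi_K(g))+m$. Now expand $\phi_j=c_{ij}\phi_K+d_{ij}\phi_i$: linear disjointness of $K_j$ with $K$ forces $d_{ij}$ to be a unit in $\ent/p^n\ent$, and linear disjointness of $K_j$ with $K_i$ forces $c_{ij}$ to be a unit, by computing the determinants of the $2\times 2$ change-of-basis matrices in the basis $(\phi_K,\phi_i)$ of $\Hom(G,\ent/p^n\ent)$. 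For $g$ in the supposed intersection, set $\alpha=v_p(\phi_K(g))$; then $v_p(c_{ij}\phi_K(g))=\alpha$ while $v_p(d_{ij}\phi_i(g))\ge\alpha+m$, so $v_p(\phi_j(g))=\alpha<\alpha+m$, contradicting $g\in\Gamma_j^m$. Hence no constraint cuts out $C^m$, forcing $C^m=C(I,\ent/p^m\ent)$ and $C(L)=C^n\simeq(\ent/p^n\ent)^m$.

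Finally, $D$ is the cyclic subgroup generated by the all-ones map $\sum_{i=1}^m\delta_i$, so $C(L)/D$ has $\delta_1,\dots,\delta_{m-1}$ as a basis and is isomorphic to $(\ent/p^n\ent)^{m-1}$. Since $u(\delta_i)=(N_i,\tilde\chi)$, Theorem~\ref{main theo} delivers both the stated structure and the listed generators. The main obstacle is the unit assertion for $c_{ij}$ extracted from linear disjointness of the $K_j$'s with the other factors; once this is in hand, the $v_p$ comparison and the identification of generators are routine.
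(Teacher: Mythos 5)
Your proposal is correct, and it takes a genuinely different route from the paper. The paper's own proof does not compute $C(L)$ at all: it uses Fr\"ohlich's theorem \cite{F} to realize $G\simeq\ent/p^n\ent\times\ent/p^n\ent$ as an unramified Galois group $\Gal(\ell'/\ell)$ over a number field, transfers the lattice $\hat T_{L/k}$ to $\hat T_{E/\ell}$, quotes \cite{Lee} Proposition 7.3 for $\sha^2_\omega(\ell,\hat T_{E/\ell})\simeq(\ent/p^n\ent)^{m-1}$, identifies this with $\sha^2_{\rm cycl}(G,\hat T_{L/k})\simeq C(L)/D$ via Theorem \ref{general}, and then concludes by Theorem \ref{main theo}. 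You instead compute $C(L)/D$ directly from Notation \ref{general group}: writing $K,K_1,\dots,K_m$ as kernels of characters $\phi_K,\phi_1,\dots,\phi_m$ of $G$, your criterion $g\in\Gamma_i^m\iff v_p(\phi_K(g))\le n-m$ and $v_p(\phi_i(g))\ge v_p(\phi_K(g))+m$ is the correct unwinding of $\langle g\rangle\cap(G_{E_i(n-m)}\setminus G_{E_i(n-m+1)})\neq\varnothing$ (here $E_i(n-m)=K(n-m)K_i$ since $K\otimes_k K_i\simeq F$), and the valuation comparison with $\phi_j=c_{ij}\phi_K+d_{ij}\phi_i$, where \emph{pairwise} linear disjointness makes both $c_{ij}$ and $d_{ij}$ units, does show $\Gamma_i^m\cap\Gamma_j^m=\varnothing$ for $i\neq j$; hence $C(L)=C(I,\ent/p^n\ent)$ and $C(L)/D\simeq(\ent/p^n\ent)^{m-1}$. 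What your route buys: it is self-contained (no appeal to \cite{Lee} Prop.~7.3 nor to the Fr\"ohlich transfer beyond what is already inside Theorem \ref{main theo}), and it makes the generator statement transparent, since the classes of the indicator maps $\delta_1,\dots,\delta_{m-1}$ visibly generate $C(L)/D$ and $u(\delta_i)=(N_i,\tilde\chi)$ --- a point the paper's proof leaves implicit in ``the assertion then follows from Theorem \ref{main theo}''. What the paper's route buys is brevity and consistency with the transfer mechanism it uses throughout (and it also records the $\sha^2_\omega$ interpretation over global fields). Do note explicitly, as you did, that the hypothesis must be read as pairwise linear disjointness of all of $K,K_1,\dots,K_m$; your unit argument for $c_{ij}$ genuinely needs disjointness of $K_i$ and $K_j$, not just of each $K_i$ with $K$.
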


\begin{proof}
There is a number field $\ell$ and an unramified Galois extension $\ell'/\ell$ such that
$\Gal(\ell'/\ell)\simeq \Gal(F/k)$ (\cite{F}).
Set $F = (\ell')^{G_K}$, $L_i = (\ell')^{G_{K_i}}$ and
$E = F \times \underset{i \in I} \prod L_i$. By construction, the $G$-lattices $\hat T_{E/\ell}$ and $\hat T_{L/k}$ are isomorphic.

By \cite{Lee} Proposition 7.3, the group $\sha^2_\omega(\ell,\hat{T}_{E/\ell})\simeq (\ent/p^n\ent)^{m-1}$.
Since $\sha^2_\omega(\ell,\hat{T}_{E/\ell})\simeq\sha^2_{\rm cycl}(G,\hat{T}_{E/\ell})\simeq
\sha^2_{\rm cycl}(G,\hat{T}_{L/k})$, we have $C(L)/D\simeq(\ent/p^n\ent)^{m-1}$ by Theorem \ref{general}.
The assertion then follows from Theorem \ref{main theo}.

\end{proof}

\section{Global fields}\label{global section}

We keep the notation of the previous section, and in addition we assume that $k$ is a global field.
Denote by $\ruB(X_a^c)$  the subgroup of $\Br(X_a^c)/\img(\Br(k))$ consisting of locally trivial elements; and by $\ruB_\omega(X_a^c)$ the subgroup consisting of elements which are trivial at almost all places of $k$.

\begin{theo}\label{main theo sha}
Suppose that $k$ is a global field with $char(k)\neq p$.
Then
\begin{enumerate}
\item $u$ induces an isomorphism between $C(L)/D$ and $\ruB_\omega(X_a^c)$.
\item $u$ induces an isomorphism between $C_{arith}(L)/D$ and $\ruB(X_a^c)$.
\end{enumerate}
\end{theo}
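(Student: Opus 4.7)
The plan is to deduce both statements by combining the isomorphism $u:C(L)/D\to\Br(X_a^c)/\img\Br(k)$ of Theorem \ref{main theo} with the group-theoretic descriptions already established: Theorem \ref{general global} gives $\sha^2(k,\hat T_{L/k})\simeq C_{arith}(L)/D$ and $\sha_\omega^2(k,\hat T_{L/k})\simeq C_\omega(L)/D$, Corollary \ref{C and C omega} gives $C_\omega(L)=C(L)$, and standard Sansuc-type identifications for torsors under tori (cf.\ \cite{Sansuc} and the arguments of \cite{BLP}) yield $\ruB(X_a^c)\simeq\sha^2(k,\hat T_{L/k})$ and $\ruB_\omega(X_a^c)\simeq\sha_\omega^2(k,\hat T_{L/k})$. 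These furnish the cardinality equalities $|C(L)/D|=|\ruB_\omega(X_a^c)|$ and $|C_{arith}(L)/D|=|\ruB(X_a^c)|$, so since $u$ is injective it suffices to verify the inclusions $u(C(L)/D)\subseteq\ruB_\omega(X_a^c)$ and $u(C_{arith}(L)/D)\subseteq\ruB(X_a^c)$; the cardinality count then upgrades these to isomorphisms.

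For statement (1), I would fix $c\in C(L)$ and let $v$ be any place of $k$ at which the splitting extension $\cK/k$ of $\hat T_{L/k}$ is unramified; these are all but finitely many places. The Frobenius $\ff_v\in G$ generates a cyclic subgroup, and under the localization of Theorem \ref{Colliot Br cycl} to $k_v$ the class $u(c)|_v\in\Br((X_a^c)_{k_v})/\img\Br(k_v)$ corresponds to the restriction of (a cocycle representing) $c$ from $G$ to $\langle\ff_v\rangle$. By Theorem \ref{general}, $c$ represents a class in $\sha^2_{\rm cycl}(G,\hat T_{L/k})$, so its restriction to any cyclic subgroup vanishes; hence $u(c)|_v=0$. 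Therefore $u(c)\in\ruB_\omega(X_a^c)$, and by the cardinality comparison $u$ induces the desired isomorphism $C(L)/D\simeq\ruB_\omega(X_a^c)$.

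For statement (2), the additional task is to show $u(c)$ is locally trivial also at the finitely many ramified places $v\in V_{rm}$ when $c\in C_{arith}(L)$. Working in the completion $k_v$ and choosing a discrete valuation ring $R$ of $k(X_a)$ dominating $v$, one computes the residue $\partial_R u(c)=\sum_i c(i)\,\nu_R(N_i)\chi_R$ exactly as in the proof of Proposition \ref{image is unramified}. The defining arithmetic condition of $C_{arith}(L)$, namely $c(i)\equiv c(j)\pmod{p^m}$ whenever $V_i^m\cap V_j^m\ne\varnothing$, is precisely what is needed to kill the sum via a local analogue of Lemma \ref{order and gamma}: the existence of an element of $\Gamma_i^m\cap\Gamma_j^m$ produced by Chebotarev is replaced by the direct existence of the place $v$ in $V_i^m\cap V_j^m$. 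The main obstacle is exactly this analysis at ramified places, where no Frobenius is available and one must work intrinsically inside the decomposition group of $v$, using the structure of the completions of $K$ and of the $K_i$ to convert the residue computation into the combinatorial condition on $V_i^m\cap V_j^m$.
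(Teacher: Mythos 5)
Part (1) of your plan is essentially sound and close to the paper: since $|\ruB_\omega(X_a^c)|=|\sha_\omega^2(k,\hat T_{L/k})|=|C(L)/D|=|\Br(X_a^c)/\img(\Br(k))|$ (the last equality by Theorem \ref{main theo}), the subgroup $\ruB_\omega(X_a^c)$ is already the whole group and the isomorphism is immediate; your extra verification at unramified places is not needed, which is just as well, because identifying $u(c)|_v$ with the restriction of a cocycle representing $c$ to $\langle\ff_v\rangle$ presupposes a compatibility between the explicit map $u$ and the abstract isomorphisms of Theorems \ref{general} and \ref{Colliot Br cycl} that is established neither by you nor by the paper.

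For part (2) there is a genuine gap. Since $C_{\rm arith}(L)/D$ can be a \emph{proper} subgroup of $C(L)/D$ (cf.\ Example \ref{2-power-bicyclic}), no cardinality count helps: one must actually prove the inclusion $u(C_{\rm arith}(L))\subseteq\ruB(X_a^c)$, i.e.\ that $u(c)_v\in\img(\Br(k_v))$ for every place $v$, and this is the heart of the statement. Your proposed mechanism --- computing residues $\partial_R u(c)$ ``exactly as in Proposition \ref{image is unramified}'' together with a ``local analogue of Lemma \ref{order and gamma}'' --- is the wrong tool: those results concern residues at discrete valuations of $k(X_a)$ whose residue fields contain $k$ and only detect whether the class is unramified, which is already known; local triviality at $v$ is instead a statement about the evaluation of the (unramified) class in $\Br(X_a^c\times_k k_v)/\img(\Br(k_v))$ and is not measured by such residues. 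The paper's argument is of a different nature: an induction on the support $J(c)$ using the auxiliary class $c'$ of Lemma \ref{definition of c'}; if $v\notin V_i^{m+1}$ for all $i\in J_m(c)$, the projection formula bounds the order of $(N_i,\chi)_v$ by the order of $\chi_{i,w}$, which is at most $p^m$, so $c'(i)(N_i,\chi)_v=0$ because $p^m$ divides $c'(i)$; if $v\in V_i^{m+1}$ for some $i\in J_m(c)$, one replaces $c'$ by $d-c'$ with $d$ constant and uses that the $C_{\rm arith}$-condition forces $V_i^{m+1}\cap V_j^{m+1}=\varnothing$ for $j\notin J_m(c)$. Moreover, even your reduction to the finitely many ramified places rests on the unproven compatibility mentioned above, and you explicitly leave the ramified-place analysis as ``the main obstacle''; so the essential step of (2) is missing rather than merely sketched.
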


\begin{proof}
First note that \cite{Sansuc} 6.1.4 remains true over global fields.
Hence $\ruB_\omega(X_a^c)\simeq \ruB_\omega(X_a)$ and $\ruB(X_a^c)\simeq \ruB(X_a)$.

By \cite{Sansuc}  6.8 and 6.9 (ii), we have $\ruB(X_a)\simeq\sha^2(k,\hat{T}_{L/k})$
(resp.$\ruB_\omega(X_a)\simeq\sha_{\omega}^2(k,\hat{T}_{L/k})$).
By Theorem \ref{general global} and Corollary \ref{C and C omega}, we conclude that
$C(L)/D\simeq\ruB_\omega(X_a^c)$ and $C_{arith}(L)/D\simeq \ruB(X_a^c)$.

As $\ruB_\omega(X_a^c)$ is a subgroup of $\Br(X_a^c)/\img(\Br(k))$ with the same cardinality,
the first statement follows from Theorem \ref{main theo}.

To see that $u$ gives rise to the desired isomorphism in (2), it is sufficient to show
that $u(c)_v\in \Br(X_a^c\times_k k_v)$ is in the image of $\Br(k_v)$ for $c\in C_{arith}(L)$
 and for $v\in V_k$.

By Lemma \ref{diagonal vanish} we can assume that $c(i_0)=0$ for some $i_0\in I_{\hat{e}}$.
Let $J(c)=\{i\in I \mid c(i)\neq 0\ {\rm in}\ \ent/p^{e_i(n)}\ent)\}$.
Set $m$ be the maximum integer such that $\pi_{n,m}(c)=0$ and
set $J_m(c)=\{i\in J(c) \mid m=\delta(0,c(i))\}$.
We prove by induction on $\lvert J(c)\rvert$.
If $|J(c)|=0$, then $c=0$ and our claim is trivial.
Suppose that our claim is true for $\lvert J(c)\rvert\leq h$.

Let $|J(c)|=h+1$. Then $c\notin D$ and $J_m(c)$ is nonempty.
Pick $j\in J_m(c)$ and choose $r\in \ent/p^{\hat{e}}\ent$
such that $c(j)=r$ $({\rm mod}\ p^{e_j(n)}\ent)$.
Let $c'$ be defined as in Lemma \ref{definition of c'}.

For $v\in V_k$, let $\chi_v$ be the image of $\chi$ in $H^1(k_v,\rat/\ent)$, and $\chi_{i,w}$ be the image of $\chi$
in $H^1((K_{i})_w,\rat/\ent)$ where $w$ is a place of $K_i$.

By the definition of $C_{arith}(L)$, the set $V^{m+1}_{i}\cap V^{m+1}_j$ is empty
for any $i\in J_m(c)$ and for any $j\notin J_{m}(c)$.

Let $v\in V_k$.
Suppose that $v\notin \underset{i\in J_m(c)}{\cup} V^{m+1}_i$.
Then for all $i\in J_m(c)$, $\chi_{i,w}$ is of order at most $m$
for all $w\mid v$.
By the projection formula,  $(N_i,\chi)_v$ has order at most $p^{m}$.
Hence $c'(i)(N_i,\chi)_v=0$ and  $u(c')_v=0$ in this case.

Suppose that $v\in V^{m+1}_i$ for some $i\in J_m(c)$.
Let $d\in D$ be the image of the constant map from $I$ to $r$.
Set $\ol{c}=d-c'$.
As the set $V^{m+1}_{i}\cap V^{m+1}_j$ is empty for any $j\notin J_{m}(c)$,
$v\notin \underset{j\notin J_m(c)}{\cup} V^{m+1}_j$.
The same argument shows that $u(\ol{c})_v=0$.
Hence $u(c')_v$ is  in the image of $\Br(k_v)$.

Since the cardinality of $J(c-c')$ decreases by at least one, by induction hypothesis $u(c-c')\in\ruB(X_a^c)$.
In combination with $u(c')\in\ruB(X_a^c)$, we see that $u(c)$ is in $\ruB(X_a^c)$.

\end{proof}

\begin{example}\label{2-power-bicyclic}
Let $k=\rat(i)$.
Let $K=k(\sqrt[4]{17})$, $K_1=k(\sqrt[4]{17\times 13})$ and $K_2=k(\sqrt[4]{13})$.
By \cite{Lee}  Example 7.4, $\sha^2_\omega(k,\hat{T}_{L/k})\simeq\ent/4\ent$ and
$\sha^2(k,\hat{T}_{L/k})\simeq\ent/2\ent$.
By Theorem \ref{main theo} and Theorem \ref{main theo sha}, the element $(N_2,\chi)$ generates the group $\Br(X_a^c)/\img(\Br(k))$ and $2(N_2,\chi)$ generates $\ruB(X_a^c)$.
\end{example}

More generally we have the following.
\begin{prop}\label{example of bicyclic global}
Let $k$ be a global field of $char(k)\neq p$.
Let $F$ be a bicyclic extension of $k$ with Galois group $\ent/p^n\ent\times\ent/p^n\ent$.
Let $K$ and $K_i$ be {linearly disjoint} cyclic subfields of $F$ with degree $p^n$
for $i=1,..., m$.
Assume moreover that $F\otimes_k k_v$ is a product of cyclic extensions for all $v\in V_k$. Then
$(N_i,\chi)$ generates $\ruB(X_a^c)$.
\end{prop}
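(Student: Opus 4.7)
The plan is to deduce the proposition directly from Proposition~\ref{example of bicyclic} once we know that, under the local cyclicity hypothesis, $\ruB(X_a^c)$ equals the full quotient $\Br(X_a^c)/\img(\Br(k))$. By Theorem~\ref{main theo sha}, this boils down to proving the equality $C_{arith}(L) = C(L)$ of the combinatorial groups introduced in \S\ref{unramified}, and the only nontrivial implication is to show $V_i^m \cap V_j^m \neq \varnothing \Rightarrow \Gamma_i^m \cap \Gamma_j^m \neq \varnothing$ for all pairs $(i,j)$ and all $1 \le m \le n$.

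To carry this out, I would first make both sets explicit in the bicyclic set-up. Since $K$ and each $K_i$ are linearly disjoint cyclic subfields of degree $p^n$ in $F$, a dimension count gives $KK_i = F$, so $K \otimes_k K_i = F$. Writing $G = \Gal(F/k) \simeq \ent/p^n\ent \times \ent/p^n\ent$, $H = \Gal(F/K)$ and $H_i = \Gal(F/K_i)$, one has $G = H \oplus H_i$, and the Galois correspondence identifies $G_{E_i(s)}$ with the unique subgroup $H_i^{(n-s)}$ of $H_i$ of order $p^{n-s}$. Since $G$ is abelian, the definition of $\Gamma_i^m$ then rewrites as
\begin{equation*}
\Gamma_i^m = \{\, g \in G : |\langle g \rangle \cap H_i| \ge p^m \,\}.
\end{equation*}
Symmetrically, for any place $v$ with decomposition subgroup $D_v \subset G$, the factorisation $K \otimes_k (K_i)_w = F \otimes_{K_i} (K_i)_w = \prod_{w' \mid w} F_{w'}$ together with the abelianness of $F/k$ yields $[F_{w'} : (K_i)_w] = |D_v \cap H_i|$, and hence
\begin{equation*}
V_i^m = \{\, v \in V_k : |D_v \cap H_i| \ge p^m \,\}.
\end{equation*}

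The second step is then immediate. The hypothesis asserts that $D_v$ is cyclic for every $v$, so for any $v \in V_i^m \cap V_j^m$ a generator $g$ of $D_v$ satisfies $|\langle g \rangle \cap H_i| \ge p^m$ and $|\langle g \rangle \cap H_j| \ge p^m$, which is exactly $g \in \Gamma_i^m \cap \Gamma_j^m$. The reverse implication is Chebotarev (Proposition~\ref{v et gamma} and Corollary~\ref{intersection of gamma}). Thus $C(L) = C_{arith}(L)$, so by Theorem~\ref{main theo sha} the inclusion $\ruB(X_a^c) \subseteq \Br(X_a^c)/\img(\Br(k))$ is an equality, and Proposition~\ref{example of bicyclic} supplies the generators $(N_i,\chi)$ for $i = 1,\ldots,m-1$.

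The main subtle point I anticipate is handling ramified places in the computation of $V_i^m$, where the Frobenius argument of Proposition~\ref{v et gamma} is unavailable; the hypothesis on cyclic decomposition groups is used precisely at these places, letting a generator of $D_v$ play the role of the missing Frobenius. Once the identifications $G = H \oplus H_i$ and $K \otimes_k K_i = F$ are in place, the rest is routine bookkeeping.
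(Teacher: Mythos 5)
Your argument is correct, and it takes a genuinely more self-contained route than the paper at the key step. The paper's own proof is a two-line reduction: it quotes [Lee 21, Prop.~7.3] to get $\sha^2(k,\hat T_{L/k})\simeq(\ent/p^n\ent)^{m-1}$, then combines Theorem \ref{general global}(1) and Theorem \ref{main theo sha}(2) (together with the count of $\Br(X_a^c)/\img(\Br(k))$ from Proposition \ref{example of bicyclic}) to conclude that $\ruB(X_a^c)$ is the whole group, hence generated by the $(N_i,\chi)$. You reach the same intermediate goal --- that under the local cyclicity hypothesis $\ruB(X_a^c)$ exhausts $\Br(X_a^c)/\img(\Br(k))$ --- but instead of outsourcing it to Lee's computation of $\sha^2$, you prove the combinatorial equality $C_{\mathrm{arith}}(L)=C(L)$ directly: the identifications $K\otimes_k K_i\simeq F$, $G=H\oplus H_i$ with $H_i$ cyclic, $\Gamma_i^m=\{g\in G: \lvert\langle g\rangle\cap H_i\rvert\ge p^m\}$ and $V_i^m=\{v: \lvert D_v\cap H_i\rvert\ge p^m\}$ are all correct in the bicyclic setting, and the hypothesis that each $F\otimes_k k_v$ is a product of cyclic extensions is exactly the cyclicity of $D_v$, so a generator of $D_v$ witnesses $\Gamma_i^m\cap\Gamma_j^m\neq\varnothing$ whenever $V_i^m\cap V_j^m\neq\varnothing$; the reverse inclusion $C_{\mathrm{arith}}(L)\subseteq C(L)$ is, as you say, Chebotarev via Proposition \ref{v et gamma} and Corollary \ref{intersection of gamma}. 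Your closing remark correctly locates where the hypothesis is really needed (at ramified places, where Frobenius is unavailable; at unramified and archimedean places $D_v$ is automatically cyclic). What your approach buys is transparency about the role of the local hypothesis and independence from the $\sha^2$ computation in [Lee 21, Prop.~7.3] (though you still use it indirectly through Proposition \ref{example of bicyclic}, which is legitimate as a prior result); what the paper's citation buys is brevity. One cosmetic point: when quoting Theorem \ref{main theo sha}(2) and Theorem \ref{main theo}, it is worth saying explicitly that both isomorphisms are induced by the same map $u$, so $C_{\mathrm{arith}}(L)=C(L)$ really does force $\ruB(X_a^c)=\Br(X_a^c)/\img(\Br(k))$ as subgroups, not merely an abstract isomorphism of groups of equal order.
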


\begin{proof}
By  \cite{Lee}  Prop. 7.3, we have $\sha^2(k,\hat{T}_{L/k})\simeq (\ent/p^n\ent)^{m-1}$.
We apply Theorem \ref{general global} (1) and Theorem \ref{main theo sha} (2) to conclude.
\end{proof}

\section{An application to Hasse principles}\label{Hasse principles}
In this section we apply Theorem \ref{main theo} to give an alternative proof of
\cite{BLP} Theorem 7.1 for
 $k$  a global field with $char(k)\neq p$.
 Moreover we can assume that $K/k$ is a cyclic extension of $p$-power degree. (See \S2 and \cite{BLP} 6.3.)

We use the notation of the previous sections. In particular, $X_a$ is the affine variety defined in the introduction,
$K/k$ is a cyclic extension of $p$-power degree, and $K_i/k$ is a finite separable extension for all $i \in I$.  Recall that
 $\chi$  an injective morphism from ${\rm Gal}(K/k)$ to $\rat/\ent$.

Let $\chi_v$ be the image of $\chi$ in $H^1(k_v,\rat/\ent)$.
Let $\inv$ be the Hasse invariant map ${\rm inv}:\Br(k_v)\to\rat/\ent$.

Denote by $K_i^v$ the algebra $K_i\otimes_k k_v$.
Suppose that there is a local point $(x_i^v)\in\underset{v\in V_k}{\prod} X_a(k_v)$, where $x_i^v\in K_i^v$ for $i\in I$ and $x_0^v\in K\otimes_k k_v$.
Define $\alpha_a:C_{arith}(L)/D\to\rat/\ent$ as
\[\alpha_a(c)=\underset{v\in V_k}{\sum} \underset{i\in I}{\sum}c(i){\rm inv}(N_{K_i^v/k_v}(x_i^v),\chi_v)\]
\begin{theo}\label{Hasse principle}
Suppose that there is a local point $(x_i^v)\in\underset{v\in V_k}{\prod} X_a(k_v)$.
Then the map $\alpha_a$ is the Brauer-Manin pairing and $X_a$ has a $k$-point if and only if $\alpha_a=0$.
\end{theo}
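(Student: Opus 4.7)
The plan is to split the statement into two parts. First, I would identify $\alpha_a$ with the restriction of the classical Brauer--Manin pairing to $\ruB(X_a^c)$ via the isomorphism $u$ of Theorem \ref{main theo sha}; second, I would invoke Sansuc's theorem on the Brauer--Manin obstruction for torsors under tori, together with \cite{DH}, to conclude that $X_a(k) \neq \varnothing$ is equivalent to $\alpha_a \equiv 0$.

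For the identification, I would start from the definition of the Brauer--Manin pairing
\[
\ruB(X_a^c) \times \underset{v \in V_k}{\prod} X_a(k_v) \longrightarrow \rat/\ent, \qquad (\beta,(P_v)) \longmapsto \underset{v \in V_k}{\sum} \inv_v(P_v^*\beta),
\]
where $P_v^*\beta \in \Br(k_v)$ is the pullback along $P_v:\spec k_v \to X_a^c$. By Theorem \ref{main theo sha} (2), an element $c \in C_{\rm arith}(L)/D$ is sent to $u(c) = \underset{i \in I}{\sum} c(i)(N_i,\tilde\chi) \in \Br_{ur}(k(X_a))$. The key computation is the evaluation of each cyclic-algebra class at $P_v = (x_i^v)$, namely
\[
P_v^{*}(N_i,\tilde\chi) = (N_{K_i^v/k_v}(x_i^v),\chi_v) \quad \text{in } \Br(k_v),
\]
which follows from the functoriality of cyclic algebras under base change: $\tilde\chi$ specializes to $\chi_v$ along $\spec k_v \to \spec k(X_a)$, and the function $N_i$ evaluates to $N_{K_i^v/k_v}(x_i^v)$. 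Summing over $i \in I$ and over $v \in V_k$ then recovers $\alpha_a(c) = \underset{v}{\sum}\inv_v(P_v^{*} u(c))$, so $\alpha_a$ is precisely the Brauer--Manin pairing.

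For the Hasse principle, I would appeal to Sansuc's theorem (\cite{Sansuc} \S 8) which states that for a torsor $X$ under a $k$-torus $T$ over a global field, the Brauer--Manin obstruction attached to the locally trivial Brauer group $\ruB(X) \simeq \ruB(X^c)$ is the only obstruction to the Hasse principle; in particular the pairing depends only on the class $[X_a] \in \sha^1(k,T_{L/k})$ and not on the chosen local point. Combined with the duality result of \cite{DH} between $\ruB(X_a^c)$ and $\sha^1(k,T_{L/k})$, this gives $X_a(k)\neq\varnothing$ if and only if the pairing $c \mapsto \underset{v}{\sum}\inv_v(P_v^{*} u(c))$ vanishes on all of $C_{\rm arith}(L)/D$, i.e.\ $\alpha_a \equiv 0$.

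The main obstacle I expect is the well-definedness of the evaluation $P_v^{*}(N_i,\tilde\chi)$ when $P_v$ lies in the zero or pole locus of some individual $N_i$, where the cyclic-algebra representative is not regular. However, since $u(c) \in \ruB(X_a^c)$ is unramified on the smooth compactification by Proposition \ref{image is unramified}, the class $P_v^{*} u(c)$ is intrinsically defined; a local residue analysis, along the same lines as Lemma \ref{order and gamma} and the proof of Proposition \ref{image is unramified}, shows that the singular contributions cancel within the sum $\underset{i}{\sum} c(i)(N_i,\tilde\chi)$. Once this verification is in place, the remainder of the argument is a direct quotation of \cite{Sansuc} and \cite{DH}.
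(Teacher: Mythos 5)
Your proposal follows essentially the same route as the paper: identify $\alpha_a$ with the Brauer--Manin pairing on $\ruB(X_a^c)$ via the isomorphism $u$ of Theorem \ref{main theo sha} (the paper quotes \cite{Sansuc} Lemma 6.2 where you compute the pullback $P_v^{*}(N_i,\tilde\chi)=(N_{K_i^v/k_v}(x_i^v),\chi_v)$ directly), and then conclude by \cite{Sansuc} Cor.\ 8.7 for number fields and \cite{DH} for global function fields. Your concern about the zero/pole locus of the $N_i$ is moot, since $N\cdot\prod_{i\in I}N_i=a$ on $X_a$, so each $N_i$ is invertible at every point of $X_a(k_v)$ and the evaluation is already well defined there.
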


\begin{proof}
First we consider the case where $k$ is a number field.
By Theorem \ref{main theo sha} and \cite{Sansuc} Lemma 6.2, the map $\alpha_a$ is the Brauer-Manin pairing of $X_a^c$.
Our claim then follows from Sansuc's result \cite{Sansuc} Cor. 8.7.

For $k$ a global function field, we apply Theorem \ref{main theo sha} and
\cite{DH}  Theorem 2.5 to conclude.
\end{proof}

\bigskip

\bigskip
\bigskip
Eva Bayer--Fluckiger

EPFL-FSB-MATH

Station 8

1015 Lausanne, Switzerland

\medskip

eva.bayer@epfl.ch

\bigskip

Ting-Yu Lee

NTU-MATH

Astronomy Mathematics Building 5F,

No. 1, Sec. 4, Roosevelt Rd.,

Taipei 10617, Taiwan (R.O.C.)

\medskip
tingyulee@ntu.edu.tw


\begin{thebibliography}{99}


\bibitem[BLP 19]{BLP} E. Bayer-Fluckiger, T-Y. Lee, R. Parimala,  \textit{ Hasse principles for multinorm equations}, Adv.
Math.
\textbf{356} (2019).

\bibitem[BP 20]{BP} E. Bayer-Fluckiger, R. Parimala, \textit{
On unramified Brauer groups of torsors over tori}, Doc. Math. {\bf 25} (2020),  1263-1284.


\bibitem[C 19]{C} K. Cesnavicius, \textit{Purity for the Brauer group}, Duke Math. J. {\bf 168} (2019), 1461-1486.

\bibitem[CTHSk 03]{CTHSk 03} J-L. Colliot-Th\'el\`ene, D. Harari,  A. Skorobogatov,
\textit{Valeurs d’un polyn\^{o}me \`a une variable repr\'esent\'ees par une norme},
Number theory and algebraic geometry, London Math. Soc. Lecture Note Ser. {\bf 303}, Cambridge Univ. Press, Cambridge (2003), 69-89.

\bibitem[CTHSk 05]{CTHSk 05} J-L. Colliot-Th\'el\`ene, D. Harari,  A. Skorobogatov, \textit{Compactification \'equivariante d’un tore} (d’apr\`es Brylinski
et K\"unnemann), Expo. Math {\bf 23} (2005), 161-170.

\bibitem[CTSk 19]{CTSk} J-L. Colliot-Th\'el\`ene, A. Skorobogatov, \textit {The Brauer-Grothendieck group}, Springer International Publishing (2021).

\bibitem[CTS 77]{CTS 77} J-L. Colliot-Th\'el\`ene, J-J. Sansuc, \textit{La R-\'equivalence sur les tores}, Ann. Sc ENS \textbf {10} (1977),
175-229.
\bibitem[CTS 87]{CTS 87} J-L. Colliot-Th\'el\`ene, J-J. Sansuc, \textit{Principal homogeneous spaces under flasque tori~: applications}, J. Algebra \textbf {106} (1987),
148-205.

\bibitem[DH 17]{DH} C. Demarche and D. Harari, \textit{Local-global principles for homogeneous spaces of reductive groups over global fields}, preprint, https://arxiv.org/abs/2107.08906.

\bibitem[F 62]{F} A. Fr\"ohlich, \textit{On non-ramified extensions with prescribed Galois group}, Mathematika \textbf{9} (1962), 133-134.



\bibitem[GS 06]{GS} P. Gille, T. Szamuely,
\textit{Central simple algebras and Galois cohomology}, Cambridge University Press (2006).

\bibitem[Lee 21]{Lee} T-Y. Lee, \textit{The Tate-Shafarevich groups of multinorm-one tori},
to appear in J. Pure  Appl. Algebra.



\bibitem[Sa 81]{Sansuc} J.-J. Sansuc, \textit{Groupe de Brauer et arithm\'etique des groupes alg\'ebriques lin\'eaires sur un corps de nombres},
J. Reine Angew. Math. \textbf {327} (1981) 12–80.

\bibitem[Se 79] {corps locaux} J-P. Serre, \textit{Local fields},
Graduate Texts in Mathematics {\bf 67} Springer-Verlag, New York-Berlin (1979).

\end{thebibliography}
\end{document}